\numberwithin{equation}{section}
\newtheorem{theorem}{Theorem}[section]
\newtheorem{proposition}[theorem]{Proposition}
\newtheorem{lemma}[theorem]{Lemma}
\theoremstyle{definition}
\newtheorem{remark}[theorem]{Remark}
\newcommand{\R}{\mathbb{R}}
\newcommand{\dis}{\displaystyle}
\newcommand{\va}{\varepsilon}
\begin{document}

\title
[Segregated  Vector Solutions]{Segregated   Vector Solutions for
linearly coupled  Nonlinear Schr\"odinger Systems}

\author{Chang-Shou Lin \, and \,Shuangjie Peng
}

\address{Taida Institute for Mathematical Sciences and Department of Mathematics, National Taiwan University, Taipei, 10617, Taiwan}

\email{cslin@math.ntu.edu.tw }

\address{School of Mathematics and Statistics, Central China
Normal University, Wuhan, 430079, P. R. China }

\email{ sjpeng@mail.ccnu.edu.cn}

\begin{abstract}
We  consider the following  system linearly coupled by nonlinear
Schr\"odinger equations in $\R^3$
$$
\left\{
\begin{array}{ll}
-\Delta u_j+u_j=u^3_j-\va\sum\limits_{i\neq j}^N u_i,\hspace{1cm}& x\in \R^3, \vspace{0.2cm}\\
u_j\in H^1(\R^3),\quad j=1,\cdots,N,
\end{array}
\right.
$$
where $\va\in\R$ is a
 coupling constant. This type of system arises in particular in models in nonlinear $N$-core fiber.

We examine the effect of the linear coupling to the solution
structure. When $N=2,3$, for any prescribed integer $\ell\ge 2$, we
construct a non-radial  vector solutions of segregated type, with
two components having exactly $\ell$ positive bumps for $\va>0$
sufficiently small. We also give an explicit description on the
characteristic features of the vector solutions.

\end{abstract}

\maketitle

\section{Introduction}
We consider the following nonlinear Schr\"odinger systems which are
linearly coupled by $N$ equations
\begin{equation}\label{eq}
\left\{
\begin{array}{ll}
-\Delta u_j+u_j=u^3_j-\va\sum\limits_{i\neq j}^N u_i,\hspace{1cm}& x\in \R^3, \vspace{0.2cm}\\
u_j\in H^1(\R^3),\quad j=1,\cdots,N,
\end{array}
\right.
\end{equation}
where $\va\in\R$.  These systems arise when one considers stationary
pulselike (standing wave) solutions of the time-dependent
$N$-coupled Schr\"odinger systems of the form
\begin{equation}\label{E1}\left\{\begin{array}{l}
-i\frac{\partial}{\partial t}\Phi_j=\Delta\Phi_j-\Phi_j+
|\Phi_j|^2\Phi_j-\va\sum^N\limits_{i\neq j}\Phi_i,
\ \ \hbox{in}\ \R^3\times\R^+,\\
\Phi_j=\Phi_j(x,t)\in \Bbb C,  t>0, \ j=1,\cdots, N.\\
\end{array}\right.\end{equation}
 This type of system arises in nonlinear optics. For example, the
propagation of optical pulses in nonlinear $N$-core directional
coupler  can be described by $N$ linearly coupled nonlinear
Schr\"odinger equations. Here $\Phi_j$ ($j=1,\cdots,N$) are envelope
functions and  $\va$, which is the normalized coupling coefficient
between the cores, is equal to the linear coupling coefficient times
the dispersion length. The sign of $\va$ determines whether the
interactions of fiber couplers are repulsive or attractive. In the
attractive case the components of a vector solution tend to go along
with each other leading to synchronization, and in the repulsive
case the components tend to segregate with each other leading to
phase separations. These phenomena have been documented in numeric
simulations (e.g., \cite{AA} and references therein).

Nonlinear Schr\"{o}dinger equations have been broadly investigated
in many aspects, such as existence of solitary waves, concentration
and multi-bump phenomena for semiclassical states (see e.g.
\cite{Cao}, \cite{WY} and the references therein). The study on
system of Schr\"odinger equations began quite recently. Mathematical
work on systems with the nonlinearly coupling terms (e.g. the term
$\sum_{i\neq j}^N u_i$ in \eqref{eq} being replaced by
$u_j\sum_{i\neq j}^N u_i^2$ ) has been studied extensively in
recent years, for example,
\cite{BDW,DWW,DL,LtW1,LtW2,LzW1,MPS,nttv,S,TV,WW1,WW2} and
references therein, where phase separation or synchronization has
been proved in several cases.

However, for the linearly coupled system \eqref{eq}, as far as the
authors know, it seems that there are few results. In  \cite{AR},
when $N=2$, solitons of linearly coupled systems of semilinear
non-autonomous equations were studied by using concentration
compactness principle, and existence of both positive ground and
bound states was proved
 under some decay  assumptions on the potentials at
infinity. In \cite{A}, this type of non-autonomous systems was also
considered by using a perturbation argument. Concerning on
autonomous systems, we also mention some results. If $N=2$ and the
dimension is one, for $\va<0$, \eqref{eq} has in addition to the
semi-trivial solutions $(\pm U,\,0),\,(0,\,\pm U)$, two types of
soliton like solutions, given by
\begin{eqnarray*}
&&(U_{1+\va},\,U_{1+\va}),\,\,(-U_{1+\va},\,-U_{1+\va}),\quad
\hbox{for}\,\,\,-1\le\va\le 0,\,\,(\hbox{symmetric}\,\,\,
\hbox{states}),\\
&&(U_{1-\va},\,-U_{1-\va}),\,\,(-U_{1-\va},\,U_{1-\va}),\quad
\hbox{for}\,\,\,\va\le 0,\,\,(\hbox{anti-symmetric}\,\,\,
\hbox{states}),
\end{eqnarray*}
where, for $\lambda>0$, $U_\lambda$ is the unique solution of
$$
\begin{cases}
-u'' +\lambda u= u^{3},\quad u>0, \quad \text{in}\; \R,\\
u(0)=\max\limits_{x\in\R} u(x),\,\,u(x)\in H^1(\R).
\end{cases}
$$
By using numerical methods, a bifurcation diagram is reported in
\cite{AA} where it is indicated that for $\va\in (-1,0)$, there
exists a family of new solutions for \eqref{eq}, bifurcating from
the branch of the anti-symmetric state at $\va=-1$. This kind of
results was rigorously verified in \cite{amco1} for small value of
the parameter $\va<0$. More precisely, in \cite{amco1}, it was
proved that a solution with one $2$-bump component having bumps
located near $\pm|\ln(-\va)|$, while the other
component having one negative peaks exists. This type of results was
generalized recently  in an interesting paper \cite{ACR} to two and
three dimensional cases. In \cite{ACR}, it was proved that if
$\mathcal{P}$ denotes a regular polytope centered at the origin of
$\R^d\,(d=2,3)$ such that its side is  larger than the radius of the
circumscribed circle or sphere, then there exists a solution with
one multi-bump component having bumps located near the vertices of
$\ln(-\va)\mathcal{P}$, while the other component has one negative
peak as $\va\to 0^-$. So in \cite{ACR}, the first component of the
solutions has more than one bump, while the second component is
negative and has only one bump. We emphasize here that the solutions
obtained in \cite{ACR} bifurcate also from the branch of
anti-symmetric state at $\va=0$. Furthermore, as  pointed out in
\cite{amco1}, for $\va<0$,  vector solutions with one component
being multi-bump do not exist near symmetric states, but only near
the anti-symmetric ones. Hence, an interesting problem is: can we
find solutions bifurcating from the symmetric state if $\va>0$? In
this paper, our main purpose is to prove that, for any prescribed
integer $\ell\ge 2$, \eqref{eq} has new solutions, different from
the previous ones, with the feature that two components have exactly
$\ell$ positive bumps when $\va>0$ is sufficiently small.

To state our  main results, we introduce some notations.

The Sobolev space $H^1(\R^3)$ is endowed with the standard norm
$$
\|u\|_{\R^3}=\Bigl(\int_{\R^3}(|\nabla
u|^2+u^2)\Bigl)^{\frac{1}{2}}.
$$

Denote by $U$ the unique solution of the following problem
\begin{equation}\label{10-18-4}
\begin{cases}
-\Delta u +u= u^{3},\quad u>0, \quad \text{in}\; \R^3,\\
u(0)=\max\limits_{x\in\R^3} u(x),\,\,u(x)\in H^1(\R^3).
\end{cases}
\end{equation}
It is well known that $U(x)=U(|x|)$ satisfies
$$
\lim\limits_{|x|\to+\infty}|x|e^{|x|}U=A>0,\,\,\hbox{and}\,\,\lim\limits_{|x|\to+\infty}\frac{U'(|x|)}{U(x)}=-1.
$$
Moreover, $U(x)$ is non-degenerate, that is,
$$
Kernel(\mathbb{L})=span\Bigl\{\frac{\partial U(x)}{\partial
x_i}:\,\,i=1,2,3\Bigl\},
$$
where $\mathbb{L}$ is the linearized operator
$$
\mathbb{L}:\,\,H^1(\R^3)\to L^2(\R^3),\,\,\mathbb{L}(u)=:\Delta
u-u+3U^2u.
$$

Let
\begin{equation}\label{1-29-9}
x^j=\dis\Bigl(r \cos\frac{2(j-1)\pi}\ell,
r\sin\frac{2(j-1)\pi}\ell,0\Bigr):=\bigl({x'}^j,\,\,0\bigr),\,\,
j=1,\cdots,\ell,
\end{equation}
and
\begin{equation}\label{1-29-10}
\begin{array}{l}
y^j=\dis\Bigl(\rho \cos\frac{(2j-1)\pi}\ell,
\rho\sin\frac{(2j-1)\pi}\ell,0\Bigr):=\bigl({y'}^j,\,\,0\bigr),\,\,\,
j=1,\cdots,\ell,
\end{array}
\end{equation}
 where   $r,\,\rho\in [r_0 |\ln\va|,\,\, r_1|\ln\va|]$ for some  $r_1>r_0>0$.

In this paper, for any function $W:\R^3\to\R$  and $\xi\in\R^3$, we
define $W_{\xi}=W(x-\xi)$.

We first consider  the following problem linearly coupled by two
nonlinear Schr\"{o}dinger equations
\begin{equation}\label{eqmain}
\left\{
\begin{array}{ll}
-\Delta u+u=u^3-\varepsilon v,\hspace{1cm}& x\in \R^3, \vspace{0.2cm}\\
-\Delta v+v=v^3 -\varepsilon u,& x\in\R^3.
\end{array}
\right.
\end{equation}

The main result can be stated as follows

\medskip
\begin{theorem} \label{main}
For any integer $\ell\ge 2$, there exists $\varepsilon_0$ such that
for $\va\in (0,\va_0)$, problem~\eqref{eqmain} has a solution
$(u,v)\in H^1(\R^3)\times H^1(\R^3)$ satisfying
$$
u^\va\sim \sum\limits_{j=1}^\ell U_{x_\va^j},\,\,v^\va\sim
\sum\limits_{j=1}^\ell U_{y_\va^j},
$$
 where $x_{\va}^j$ and $y_{\va}^j$ are respectively  defined by \eqref{1-29-9} and
 \eqref{1-29-10} with
 $$
r_\va =
\frac{2\sin\frac{\pi}{\ell}}{2\sin\frac{\pi}{\ell}-\sqrt{2(1-\cos\frac{\pi}{\ell})}}|\ln\va|+o(|\ln\va|),\quad\rho_\va=
\frac{2\sin\frac{\pi}{\ell}}{2\sin\frac{\pi}{\ell}-\sqrt{2(1-\cos\frac{\pi}{\ell})}}|\ln\va|+o(|\ln\va|).
 $$
 Moreover, as $\va\to 0^+$,
 $$
\|u^\va(\cdot)-v^\va(T_\ell\cdot)\|_{H^1}+\|u^\va(\cdot)-v^\va(T_\ell\cdot)\|_{L^\infty}\to
0.
 $$
 Here $T_\ell\in SO(3)$ is the rotation on the $(x_1,x_2)$ plane of
 $\frac{\pi}{\ell}$.
\end{theorem}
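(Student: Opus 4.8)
The plan is to construct the solution by the Lyapunov--Schmidt reduction, treating the sum of $2\ell$ well-separated copies of the ground state $U$ as an approximate solution. Write $Z_{r,\rho}=\sum_{j=1}^\ell U_{x^j}$ and $W_{r,\rho}=\sum_{j=1}^\ell U_{y^j}$, with the points $x^j$, $y^j$ as in \eqref{1-29-9}--\eqref{1-29-10}, and look for a genuine solution of the form $(u,v)=(Z_{r,\rho}+\phi,\,W_{r,\rho}+\psi)$ where $(\phi,\psi)$ is a small correction lying in the orthogonal complement of the approximate kernel
$$
\mathcal{K}=\mathrm{span}\Bigl\{\frac{\partial U_{x^j}}{\partial x_k},\,\frac{\partial U_{y^j}}{\partial x_k}:\ j=1,\dots,\ell,\ k=1,2\Bigr\}.
$$
I would work in the natural symmetric subspace of $H^1(\R^3)\times H^1(\R^3)$: functions even in $x_3$, even in $x_2$, and invariant under the rotation by $2\pi/\ell$ in the $(x_1,x_2)$-plane, with the additional reflection symmetry exchanging the two components and rotating by $\pi/\ell$ (this last symmetry is exactly what forces the limiting relation $u^\va(\cdot)\approx v^\va(T_\ell\cdot)$ in the conclusion). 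Restricting to this subspace kills the $k=3$ directions, reduces the free parameters to the two radii $r,\rho$, and — crucially — links $r$ and $\rho$ so that in the end one only has a single effective reduced variable; this also cuts the dimension of $\mathcal{K}$ down to a manageable size.

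The first analytic step is the standard linear theory: using the non-degeneracy of $U$ stated in the excerpt and the fact that the bumps are separated by distances of order $|\ln\va|\to\infty$, one shows the linearized operator $L$ of the system at $(Z_{r,\rho},W_{r,\rho})$, restricted to $\mathcal{K}^\perp$, is invertible with inverse bounded uniformly in $\va$ (and in $r,\rho$ in the admissible range). The off-diagonal coupling terms $-\va\psi$, $-\va\phi$ are harmless since $\va\to0$. One then sets up the fixed-point problem for $(\phi,\psi)$ via the contraction mapping principle; the error term $E=(-\Delta Z+Z-Z^3+\va W,\ -\Delta W+W-W^3+\va Z)$ has two contributions of genuinely different nature: the \emph{interaction error} among the bumps of the \emph{same} component, which is of order $e^{-|x^i-x^j|}\sim e^{-\min(2r\sin(\pi/\ell),\,2r\sin(2\pi/\ell),\dots)}$, i.e. governed by nearest-neighbor distance $\sim 2r\sin(\pi/\ell)$, and the \emph{coupling error} $\va W$, $\va Z$, which near a bump of $Z$ is of size $\va\,U(\text{dist to nearest }y\text{-bump})\sim \va\, e^{-\mathrm{dist}(x^i,\,\{y^j\})}$ with $\mathrm{dist}(x^i,\{y^j\})\sim \sqrt{r^2+\rho^2-2r\rho\cos(\pi/\ell)}$. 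Solving $\|(\phi,\psi)\|=O(\|E\|+\text{higher order})$ gives a smooth map $(r,\rho)\mapsto(\phi_{r,\rho},\psi_{r,\rho})$.

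The second, and to my mind the essential, step is the \emph{finite-dimensional reduction}: plug the solved $(\phi,\psi)$ into the energy functional
$$
J(u,v)=\frac12\|u\|_{\R^3}^2+\frac12\|v\|_{\R^3}^2-\frac14\int_{\R^3}(u^4+v^4)+\va\int_{\R^3}uv,
$$
obtaining a reduced function $\mathcal{J}(r,\rho)=J(Z_{r,\rho}+\phi,\,W_{r,\rho}+\psi)$ whose critical points give solutions. The expansion will have the shape
$$
\mathcal{J}(r,\rho)=\ell\,c_0 \;-\; \ell\,\Psi(2r\sin\tfrac{\pi}{\ell}) \;-\;\ell\,\Psi(2\rho\sin\tfrac{\pi}{\ell})\;+\;\va\,\ell\,\Theta\bigl(d(r,\rho)\bigr)\;+\;\text{l.o.t.},
$$
where $d(r,\rho)=\sqrt{r^2+\rho^2-2r\rho\cos(\pi/\ell)}$ is the $x$--$y$ bump distance, and $\Psi(s),\Theta(s)$ behave like a positive constant times $s\,e^{-s}$ (this is exactly the asymptotics coming from $|x|e^{|x|}U\to A$; the intra-component interaction \emph{lowers} the energy because two positive bumps attract, whereas the $\va\,uv>0$ coupling of two positive bumps \emph{raises} it). One then looks for a critical point: setting $r=\rho$ by symmetry, the two competing exponentials $e^{-2r\sin(\pi/\ell)}$ and $\va\,e^{-d(r,r)}=\va\,e^{-r\sqrt{2(1-\cos(\pi/\ell))}}$ balance precisely when
$$
2r\sin\tfrac{\pi}{\ell}=|\ln\va|+r\sqrt{2(1-\cos\tfrac{\pi}{\ell})}+o(|\ln\va|)\ \Longrightarrow\ r_\va=\frac{2\sin\frac{\pi}{\ell}}{2\sin\frac{\pi}{\ell}-\sqrt{2(1-\cos\frac{\pi}{\ell})}}|\ln\va|+o(|\ln\va|),
$$
which is the value announced in the theorem; one checks that at this balance the derivative $\partial_r\mathcal{J}$ changes sign, producing a (nondegenerate, after a standard argument) critical point $r_\va$ in the interval $[r_0|\ln\va|,r_1|\ln\va|]$. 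The main obstacle is precisely making this reduced-energy expansion sharp enough — one must compute the interaction integrals $\int U_{x^i}^3 U_{x^j}$ and the coupling integral $\int Z_{r,\rho}W_{r,\rho}$ to leading order \emph{and} control the error terms uniformly so that the correction $(\phi,\psi)$ contributes only to the $o(1)$ remainder and does not swamp the leading balance; establishing that the inequality on the polytope side length (here automatic: $2\sin(\pi/\ell)>\sqrt{2(1-\cos(\pi/\ell))}$ for all $\ell\ge2$, so $r_\va>0$) guarantees the two exponential scales are comparable rather than one dominating, and verifying the sign of the derivative at the crossing, is where the real work lies. Finally, the $H^1+L^\infty$ closeness $u^\va(\cdot)\approx v^\va(T_\ell\cdot)$ follows because both equal $\sum U$ at the corresponding rotated points up to the common small error $(\phi,\psi)$, which tends to $0$ in $H^1$, and then in $L^\infty$ by elliptic estimates.
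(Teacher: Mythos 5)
There is a genuine gap at the central point of your argument: the choice of approximate solution. You take $Z_{r,\rho}=\sum_j U_{x^j}$ and $W_{r,\rho}=\sum_j U_{y^j}$ as the ansatz, and you estimate the coupling part of the error as $\va\,e^{-\mathrm{dist}(x^i,\{y^j\})}$. That estimate is wrong: the error in the first equation contains the term $\va W_{r,\rho}$, which is a function of size $O(1)$ near each $y^j$, so its $L^2$ (and $H^{-1}$) norm is of order $\va$, not $\va e^{-d}$. Consequently the linear functional $l$ in the reduced expansion satisfies $\|l\|=O(\va)$, the correction is $\|(\phi,\psi)\|=O(\va)$, and its contribution to the reduced energy is $O(\va^2)$, with an $r,\rho$-dependence you cannot control. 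Meanwhile the leading interaction terms you want to balance, $e^{-2r\sin(\pi/\ell)}$ and $\va e^{-d(r,\rho)}$, are both of order $\va^{m/(m-n)}$ with $m=2\sin\frac{\pi}{\ell}$, $n=\sqrt{2(1-\cos\frac{\pi}{\ell})}$ and $2<\frac{m}{m-n}<4$ (e.g.\ $2+\sqrt2$ for $\ell=2$). Hence the $O(\va^2)$ remainder \emph{dominates} the terms whose competition is supposed to produce the critical point, and the reduced-energy argument collapses. You flag "not swamping the leading balance'' as where the real work lies, but with this ansatz it is not a matter of work: it fails. This is exactly the obstruction recorded in Remark~\ref{re3.2} of the paper.

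The paper's way around it is a \emph{preliminary} reduction (Section~2): one first constructs an exact radial solution $(U_\va,v_\va)$ of the coupled system bifurcating from $(U,0)$, with $v_\va=O(\va)$ and precise exponential decay, and then glues translated copies of this \emph{vector} profile, taking $u=\sum_j U_{\va,x^j}+\sum_j v_{\va,y^j}+\varphi$ and $v=\sum_j U_{\va,y^j}+\sum_j v_{\va,x^j}+\psi$. Because each block solves the system exactly, the $O(\va)$ coupling error cancels identically and only inter-bump interactions (exponentially small, of the correct order $\va^{\frac{m}{m-n}+\sigma}$) survive in $l$; this is what makes the two-dimensional reduced problem solvable. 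Your remaining ingredients are essentially sound and consistent with the paper — the linear theory from non-degeneracy, the balance $2r\sin\frac{\pi}{\ell}=|\ln\va|+r\sqrt{2(1-\cos\frac{\pi}{\ell})}+o(|\ln\va|)$ giving the stated $r_\va$, and the exchange symmetry $v=u\circ T_\ell$ explaining the last assertion (the paper instead keeps $r,\rho$ independent and locates an interior maximum of the reduced energy on a carefully chosen square $\mathcal D_\va\times\mathcal D_\va$, which is a legitimate alternative to your one-variable reduction) — but without the modified approximate solution the proof does not close.
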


Theorem~\ref{main} says that $|x^i_\va-y^j_\va|/|\ln\va|\to
a_{i,j}>0$ ($i,j=1,\cdots,\ell$) as $\va\to 0$. Hence
Theorem~\ref{main} gives  segregated types of solutions for system
\eqref{eqmain} with the essential support of the two components
being  segregated for $\va$ sufficiently small.

We also construct segregated vector solutions for the following
three coupled systems,  which arise when one considers the
propagation of pulses in a $3$-core couplers with circular symmetry:
\begin{equation}\label{eqmain2}
\left\{
\begin{array}{ll}
-\Delta u+u=u^3-\varepsilon (v+\omega),\hspace{1cm}& x\in \R^3, \vspace{0.2cm}\\
-\Delta v+v=v^3 -\varepsilon (u+\omega),& x\in\R^3,\vspace{0.2cm}\\
-\Delta \omega+\omega=\omega^3 -\varepsilon (u+v),& x\in\R^3.
\end{array}
\right.
\end{equation}

\medskip
\begin{theorem} \label{main2}
For any integer $\ell\ge 2$, there exists $\varepsilon_0$ such that
for $\va\in (0,\va_0)$, problem~\eqref{eqmain2} has a solution
$(u^\va,v^\va,\omega^\va)\in (H^1(\R^3))^3$ satisfying
$$
u^\va\sim \sum\limits_{j=1}^\ell U_{x_\va^j},\quad v^\va\sim
\sum\limits_{j=1}^\ell U_{y_\va^j},\quad \omega^\va\sim U,
$$
 where $x_{\va}^j$ and $y_{\va}^j$ are the same as those of Theorem~\ref{main} if $\ell>2$, but for $\ell=2$
 $$
r_\va= |\ln\va|+o(|\ln\va|),\quad\rho_\va= |\ln\va|+o(|\ln\va|).
 $$
  Moreover, as $\va\to 0^+$,
 $$
\|u^\va(\cdot)-v^\va(T_\ell\cdot)\|_{H^1}+\|u^\va(\cdot)-v^\va(T_\ell\cdot)\|_{L^\infty}\to
0.
 $$
\end{theorem}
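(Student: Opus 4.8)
The plan is to use the Lyapunov--Schmidt reduction, running essentially the same scheme that underlies the proof of Theorem~\ref{main}, but now on the three-component system \eqref{eqmain2}. For $r,\rho\in[r_0|\ln\va|,r_1|\ln\va|]$ as in the statement, set the approximate solution
$$
\bigl(U_{\rm app},V_{\rm app},W_{\rm app}\bigr)=\Bigl(\sum_{j=1}^\ell U_{x^j},\ \sum_{j=1}^\ell U_{y^j},\ U\Bigr),
$$
and seek a genuine solution of the form $u=U_{\rm app}+\phi_1$, $v=V_{\rm app}+\phi_2$, $\omega=W_{\rm app}+\phi_3$ with $(\phi_1,\phi_2,\phi_3)$ small and orthogonal (in the appropriate weighted sense) to the approximate kernel spanned by $\partial U_{x^j}/\partial x_k$, $\partial U_{y^j}/\partial x_k$ and $\partial U/\partial x_k$. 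One first shows that the linearized operator at $(U_{\rm app},V_{\rm app},W_{\rm app})$, restricted to this orthogonal complement, is invertible with inverse bounded uniformly in $\va$; the coupling terms $-\va(v+\omega)$, $-\va(u+\omega)$, $-\va(u+v)$ are of lower order since $\va\to0$, so the invertibility follows from the nondegeneracy of $U$ recorded after \eqref{10-18-4}, exactly as in the two-component case. A contraction-mapping argument then solves the infinite-dimensional part: one obtains $(\phi_1,\phi_2,\phi_3)=(\phi_1,\phi_2,\phi_3)(r,\rho)$ depending smoothly on the parameters with the estimate $\|\phi_i\|_{\R^3}=O(\va+e^{-(1+\sigma)d_{\min}})$ for some $\sigma>0$, where $d_{\min}$ is the minimal mutual distance among the bump centres and the origin.

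The finite-dimensional reduction then amounts to choosing $r,\rho$ so that the reduced energy $\mathcal E_\va(r,\rho)$, obtained by substituting the solved $(\phi_1,\phi_2,\phi_3)$ back into the energy functional of \eqref{eqmain2}, has a critical point. The leading-order expansion of $\mathcal E_\va(r,\rho)$ should read, up to a positive multiplicative constant and a constant term,
$$
\mathcal E_\va(r,\rho)=c_0\,\ell\,e^{-2\sin\frac{\pi}{\ell}\cdot\min(r,\rho)}\cdot(\text{geometric factor})
+c_1\,\va\,\ell\, e^{-\,\text{(distance between the two rings)}}+c_2\,\va\,e^{-\rho}+\cdots,
$$
where the first term is the repulsive self-interaction energy within each ring of like-signed bumps (distance $2\sin\frac{\pi}{\ell}\cdot r$ between adjacent bumps on a single ring), the second is the attractive cross-interaction between the $u$-ring and the $v$-ring (the relevant nearest distance, with the $\pi/\ell$-rotation $T_\ell$ built in, being $\sqrt{2(1-\cos\frac\pi\ell)}\,\cdot$ (ring radius) when $r=\rho$), and the $c_2\va e^{-\rho}$ term records the interaction of each ring with the single central bump $U$. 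Balancing the $e^{-2\sin\frac\pi\ell r}$ repulsion against the $\va e^{-\sqrt{2(1-\cos\frac\pi\ell)}r}$ attraction gives a nondegenerate maximum in $(r,\rho)$ located at
$$
r_\va=\rho_\va=\frac{2\sin\frac\pi\ell}{2\sin\frac\pi\ell-\sqrt{2(1-\cos\frac\pi\ell)}}\,|\ln\va|+o(|\ln\va|),
$$
which is exactly the value claimed (note $2\sin\frac\pi\ell>\sqrt{2(1-\cos\frac\pi\ell)}$ for $\ell\ge2$, so the coefficient is positive and finite); the standard Lyapunov--Schmidt argument then yields a true critical point near this value, hence a solution of \eqref{eqmain2}.

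Two points deserve separate attention. First, the special case $\ell=2$: here the two ``rings'' each consist of a pair of antipodal bumps, and after the $\pi/2$-rotation $T_2$ the $u$-bumps and $v$-bumps sit at the four vertices of a square, so the nearest $u$--$v$ distance and the nearest $u$--$u$ distance become comparable; the geometric factor $\sqrt{2(1-\cos\frac\pi\ell)}$ degenerates and a direct re-examination of the interaction balance gives $r_\va=\rho_\va=|\ln\va|+o(|\ln\va|)$ instead — this is why the statement singles $\ell=2$ out. Second, and this is the main obstacle, one must control the interaction of the two $\ell$-bump rings with the \emph{third}, central, component $\omega\sim U$: because $U$ sits at the origin and is positive while the coupling $-\va u$, $-\va v$ in its equation is attractive, one has to check that the extra $\va e^{-\rho}$-type terms it introduces into $\mathcal E_\va$ are genuinely of lower order than the $u$--$v$ balance that fixes $(r,\rho)$, and — more delicately — that they do not destroy the nondegeneracy of the critical point; this requires the matching estimate $\|u^\va(\cdot)-v^\va(T_\ell\cdot)\|_{H^1}+\|u^\va(\cdot)-v^\va(T_\ell\cdot)\|_{L^\infty}\to0$, which follows from the $T_\ell$-symmetry of the construction (one solves the reduced problem in the class of functions invariant under the relevant dihedral symmetry group, forcing $r_\va=\rho_\va$ and $\phi_1(\cdot)=\phi_2(T_\ell\cdot)$), together with the uniform smallness of the $\phi_i$.
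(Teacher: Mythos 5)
Your overall scheme (Lyapunov--Schmidt with two rings of bumps plus a central bump, reduced to a two-dimensional problem in $(r,\rho)$) is the right skeleton, but the proposal fails at its very first step: the choice of approximate solution. With the naive ansatz $\bigl(\sum_j U_{x^j},\sum_j U_{y^j},U\bigr)$ the linear coupling contributes terms such as $\va\int_{\R^3}\bigl(\sum_j U_{x^j}\bigr)\phi_2$ to the error functional, so the projected error has norm $O(\va)$ and your correctors satisfy only $\|\phi_i\|=O(\va)$, exactly as you state. The reduced energy then carries an error $O\bigl(\|f_\va\|\,\|\phi\|+\|\phi\|^2\bigr)=O(\va^2)$, whereas the interaction terms you intend to balance, $\va e^{-n\rho}\sim e^{-m\rho}/\rho$ with $m=2\sin\frac{\pi}{\ell}$, $n=\sqrt{2(1-\cos\frac{\pi}{\ell})}$, are of size $\va^{m/(m-n)}$ with $2<\frac{m}{m-n}<4$: they are $o(\va^2)$ and are swallowed by the error. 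This is precisely the obstruction recorded in Remark~\ref{re3.2}, and it is why Section~2 first constructs, by a preliminary radial reduction, an exact radial vector solution $(U_\va,v_\va,\omega_\va)$ of the coupled system with $U_\va=U+O(\va^2)$, $v_\va,\omega_\va=O(\va)$ and remainders of order $O(\va^4)$. The correct ansatz then places translated copies of all three profiles in each component, namely $u=U_{\va,r}+v_{\va,\rho}+\omega_\va+\varphi$, $v=\omega_{\va,r}+U_{\va,\rho}+v_\va+\psi$, $\omega=v_{\va,r}+\omega_{\va,\rho}+U_\va+\phi$, so that the linear coupling is cancelled bump by bump and $\|(\varphi,\psi,\phi)\|=o(\va^{m/(m-n)})$. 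Without this modification the finite-dimensional reduction cannot be closed.

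A second, smaller but still substantive, point concerns the role of the central bump. The ring-to-centre interaction contributes $\tilde C\va(e^{-r}+e^{-\rho})$ to the reduced energy, while the ring-to-ring interaction contributes $\bar C\va e^{-|x^1-y^1|}$ with $|x^1-y^1|\approx n\max(r,\rho)$. Since $n=\sqrt2>1$ for $\ell=2$, $n=1$ for $\ell=3$ and $n<1$ for $\ell>3$, the centre term is dominant for $\ell=2$ (forcing the balance $\va e^{-r}\sim e^{-2r}/r$, hence $r_\va\sim|\ln\va|$), comparable for $\ell=3$, and negligible only for $\ell>3$; the proof of Proposition~\ref{pro41} splits into exactly these three cases. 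Your explanation of why $\ell=2$ is special (degeneration of the geometric factor, comparability of the $u$--$u$ and $u$--$v$ distances) is not the actual mechanism, and your claim that the $\va e^{-\rho}$ term is of lower order than the $u$--$v$ balance is false for $\ell=2,3$. (Also, with $\va>0$ and all bumps positive, the cross terms $+\va\int_{\R^3}uv$ raise the energy, i.e.\ are repulsive, and the same-ring terms are attractive --- the opposite of the signs you assign --- although the location of the resulting balance is unaffected.)
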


\medskip
\begin{remark} The segregation  nature of these
solutions are demonstrated from the $L^\infty$ estimates in the
theorems and will be more clear in Propositions \ref{pro2} and
\ref{pro41} stated later after we find a good approximate solution
and fix the notations. Roughly speaking, as $\va\to 0$, the
segregated solutions may have a large number of bumps near infinity
while the locations of the bumps for $u$ and $v$ have an angular
shift.
\end{remark}

\begin{remark}
In \cite{ACR}, to guarantee  the existence of the solutions, the
side of the polytope should be greater than the radius, which
implies that the number of the solutions cannot be very large (at
least in two dimensional case). In our results, the number of the
bumps can be very large, and the energy of the solutions can become
so large as we expected. Moreover,  all the bumps are positive,
which implies that these solutions bifurcate from the symmetric state at $\va=0$. Hence our
results are in striking contrast with those of \cite{ACR}.
\end{remark}

\begin{remark}
Our argument also works well for the following more general problems
in various dimensional case
$$
\left\{
\begin{array}{ll}
-\Delta u_j+u_j=|u_j|^{p-2}u_j-\va\sum\limits_{i\neq j}^N u_i,\hspace{1cm}& x\in \R^d, \vspace{0.2cm}\\
u_j\in H^1(\R^d),\quad j=1,\cdots,N.
\end{array}
\right.
$$
Here $N=2,3$,  $d>1$, and $2<p<2^*$, where $2^*=2d/(d-2)$ if $d\ge
3$ and $ 2^*=+\infty$ if $d=2$. We point out that our results are
most likely wrong for $d=1$, which is verified by the numerical
computation in \cite{AA}.
\end{remark}

To prove the main results, we will employ the well-known
Lyapunov-Schmidt reduction (see, e.g., \cite{R}) to glue the
functions $U_{x^j}$ (or $U_{y^j}$) ($j=1,\cdots, \ell$). In
performing this technique,  to find critical points of the reduced
functionals, a basic requirement is that the error terms of the
functionals, which come from the finite dimensional reduction,
should be of higher order small data of the main terms in the
reduced functionals. However, in our linearly coupled systems,
different from the nonlinearly coupled ones (see, e.g., \cite{LtW3}
and \cite{PW}), if we choose $(U,U)$ as an approximate  solution,
the error terms from the linear coupling  dominate the main terms
(which are generated from the interaction between the neighbor
bumps) of the reduced functionals. To overcome this difficulty, we
should modify another  approximate solution $(U,0)$. This idea is
essentially from \cite{ACR}, where an approximate solution
$(U_\va,V_\va)$ bifurcates from $(U,0)$. However, comparing with
\cite{ACR}, we encounter  two more problems. Firstly, we need a new
approximate solution and a precise  estimate on it. To this end, we
will make a  modification on  $(U,0)$ carefully  by using the
reduction technique (see section 2). This procedure provides us a
more accurate approximate solution with required estimate. Secondly,
after performing  a second reduction, we need to solve a
two-dimensional critical point problem, which requires us to choose a
very delicate domain and make a precise analysis on the reduced
functionals. So, we  need a very accurate estimate on the energy of
the reduced functional, which also needs the help of the approximate
solution. Hence, here we will perform the reduction twice and deal
with more complicated reduced functionals.

To find vector solutions with two components having the prescribed
number of bumps, we will employ the idea proposed by Wei and Yan in
\cite{WY}, where infinitely many positive solutions were constructed
for single Schr\"{o}dinger equations. This idea is also effective in
finding infinitely many  non-radial positive solutions for
semilinear elliptic problems with  critical or super-critical
Sobolev growth (see, for example, \cite{WY1,WY2,WY3}) and
Schr\"{o}dinger systems with nonlinear coupling (see, for example,
\cite{PW}).

This paper is organized as follows. In section~2, we will perform a
reduction argument for the first time and modify the vector function
$(U,0)$  so that we can get an accurate approximate solution and a
precise estimate on it. In section~3, using the approximate
solution, we will formulate a more precise version of the main
results which give more precise descriptions about the segregated
character of the solutions. We will also carry out the reduction for
the second time to a finite two-dimensional setting and prove
Theorem~\ref{main}. The study of existence of segregated solutions
for a system coupled by three nonlinear Schr\"{o}dinger equations will
be briefly discussed in section~4  by using our framework of
methods. We conclude with the energy expansion in the appendix.


\section{An approximate solution}

\setcounter{equation}{0} In this section,  to  look for a proper
approximate vector solution, we need to modify $(U,0)$. Let
$H^1_r(\R^3)$ and $L^2_r(\R^3)$ denote the corresponding spaces of
radial functions. For $(u,v)\in H^1_r(\R^3)\times H^1_r(\R^3)$, we
define $\|(u,v)\|=\|u\|_{H^1(\R^3)}+\|v\|_{H^1(\R^3)}$.

 Solving in $H^1_r(\R^3)\times H^1_r(\R^3)$ the equations
$$
\left\{
\begin{array}{ll}
-\Delta u_1+u_1-3U^2u_1=0,\hspace{1cm}& x\in \R^3, \vspace{0.2cm}\\
-\Delta \tilde v_1+\tilde v_1=-U,& x\in\R^3,
\end{array}
\right.
$$
we get $u_1=0$ and $\tilde v_1<0$. Let $c(x)\in H^1_r(\R^3)$ satisfy
$$
-\Delta c(x)+c(x)=\tilde  v_1^3,
$$
then $v_1=\tilde v_1+\va^2 c(x)$ solves
$$
-\Delta v_1+v_1=-U+\va^2\tilde v_1^3.
$$

Now for $k\ge 2$, by the Fredholm Alternative Theorem   we can
define $(u_k,v_k)\in H^1_r(\R^3)\times H^1_r(\R^3)$ by solving
\begin{equation}\label{eq01}
\left\{
\begin{array}{ll}
-\Delta u_k+u_k-3U^2u_k=-kv_{k-1},\hspace{1cm}& x\in \R^3, \vspace{0.2cm}\\
-\Delta v_k+v_k=-k u_{k-1},& x\in\R^3.
\end{array}
\right.
\end{equation}
We can also see that $v_2=0$.

\begin{remark}
Here we execute the second modification  by defining $v_1$ so that
the norm of the error terms in $H^1(\R^3)\times H^1(\R^3)$ can be
dominated by $C\va^4$ (see Proposition~\ref{pro2.1} later).
\end{remark}

We want to find suitable $(w(x),h(x))\in H^1_r(\R^3)\times
H^1_r(\R^3)$ such that
\begin{equation}\label{eq02}
(U_\va,\,v_\va)=:\Bigl(U+\sum\limits_{i=1}^4
\frac{\va^i}{i!}u_i+\va^4 w,\,\sum\limits_{i=1}^4
\frac{\va^i}{i!}v_i+\va^4 h\Bigr)
\end{equation}
solves  problem~\eqref{eqmain}.

Inserting \eqref{eq02} into \eqref{eqmain} and employing
\eqref{eq01}, we find
\begin{equation}\label{eq04}
\left\{
\begin{array}{ll}
-\Delta
w+w-3U^2w=\dis\frac{H_{\va}(u_2,u_3,u_4,v_4,U)}{\va^4}+l_\va(h,w)+\frac{R_\va(\va^4w)}{\va^4},
 \vspace{0.2cm}\\
-\Delta h+h=\dis \frac{\bar
H_\va(v_1,v_3,v_4,u_4)}{\va^4}+\bar{l}_\va(h,w)+\dis\frac{\bar{R}_\va(\va^4
h)}{\va^4},
\end{array}
\right.
\end{equation}
where
\begin{eqnarray*}
&&H_{\va}(u_2,u_3,u_4,v_4,U)=\Bigl(U+\dis\sum\limits_{i=2}^4
\frac{\va^i}{i!}u_i\Bigr)^3-U^3-3U^2\sum\limits_{i=2}^4
\frac{\va^i}{i!}u_i-\frac{\va^{5}}{4!}v_4,\\
&&l_\va(w,h)=3\Bigl(\Bigl(U+\dis\sum\limits_{i=2}^4
\frac{\va^i}{i!}u_i\Bigr)^{2}-U^{2}\Bigr)w-\va h,\\
&&R_\va(\va^4 w)= 3 \Bigl(U+\sum\limits_{i=2}^4
\frac{\va^i}{i!}u_i\Bigr)(\va^4 w)^2+(\va^4 w)^3, \\
&&\frac{\bar
H_\va(v_1,v_3,v_4,u_4)}{\va^4}=\Bigl(\sum\limits_{i=1}^4\dis\frac{\va^{i}}{i!}v_i\Bigr)^3
-\va^3\tilde v_1^3-\frac{\va^5}{4!}u_4,\\
 &&\bar{l}_\va=-\va w+3\Bigl(\dis\sum\limits_{i=1}^4
\frac{\va^i}{i!}v_i\Bigr)^{2}h,\\
&&\bar{R}_\va(\va^4 h)=3\Bigl(\dis\sum\limits_{i=1}^4
\frac{\va^i}{i!}v_i\Bigr)(\va^4 h)^2+(\va^4h)^3.
\end{eqnarray*}

Direct calculation yields that
\begin{eqnarray*}
 \Bigl|\frac{H_{\va}(u_2,u_3,u_4,v_4,U)}{\va^4}\Bigl|
&\le&C,\\
\frac{ \bar
H_\va(v_1,v_3,v_4,u_4)}{\va^4}&=&\Bigl(\Bigl(\sum\limits_{i=1}^4\dis\frac{\va^{i}}{i!}v_i\Bigr)^3
-\va^3\tilde v_1^3-\frac{\va^5}{4!}u_4\Bigr)\Bigl/\va^4,\\
&=&\Bigl(\Bigl(\sum\limits_{i=1}^4\dis\frac{\va^{i}}{i!}v_i\Bigr)^3-\va^3v_1^3+\va^3(v_1^3
-\tilde v_1^3)-\frac{\va^5}{4!}u_4\Bigr)\Bigl/\va^4\\
&=&O(\va),
\end{eqnarray*}
where we have used the fact $v_2=0$ and $|v_1-\tilde
v_1|=O(\va^{2})$.

Since the kernel of operator
$$
\mathbb{L} \left(\begin{array}{ll}  w\vspace{0.2cm}\\
h
\end{array}
\right) =\left(\begin{array}{ll}  -\Delta
w+w-3U^2w\vspace{0.2cm}\\
-\Delta h+h
\end{array}
\right) :\,\,H^1_r(\R^3)\times H^1_r(\R^3)\to L^2_r(\R^3)\times
L^2_r(\R^3)
$$
is $\{(0,0)\}$ in $H^1_r(\R^3)\times H^1_r(\R^3)$, we know that the
operator $\mathbb{L}$ has bounded inverse in $H^1_r(\R^3)\times
H^1_r(\R^3)$.

Define
$$
 \left(\begin{array}{ll}  \bar w\vspace{0.2cm}\\
\bar h
\end{array}
\right) =\mathbb{L}^{-1}\left(\begin{array}{ll}
\dis\frac{H_{\va}(u_2,u_3,u_4,v_4,U)}{\va^4}+l_\va(h,w)
+\frac{R_\va(\va^4w)}{\va^4}\vspace{0.2cm}\\
\dis\frac{\bar
H_{\va}(v_1,v_3,v_4,u_4)}{\va^4}+\bar{l}_\va(h,w)+\frac{\bar{R}_\va(\va^4
h)}{\va^4}
\end{array}
 \right)
 =:\mathbb{A}\left(\begin{array}{ll}  w\vspace{0.2cm}\\
 h
\end{array}
\right)
$$
and the set
$$
\mathbb{S}=\{(w,h)\in H^1_r(\R^3)\times
H^1_r(\R^3):\,\,\|(w,\,h))\|\le |\va|^{-\sigma}\},
$$
where $\sigma>0$ is sufficiently  small.

Then by direct calculation, we find for $(w,h),\, (w_1,h_1),\,
(w_2,h_2)\in \mathbb{S}$,
\begin{eqnarray*}
&&\|(\bar w,\,\bar h)\|\le C(1+\va)\le |\va|^{-\sigma},\\
&&\|(\bar w_1-\bar w_2, \bar h_1-\bar
h_2)\|=\|\mathbb{A}(w_1-w_2,h_1-h_2)\|\\
&&\hspace{3.9cm}\le |\va| \|(w_1-w_2,h_1-h_2)\|<\frac12
\|(w_1-w_2,h_1-h_2)\|.
\end{eqnarray*}
 Therefore,    the operator $\mathbb{A}$ maps $\mathbb{S}$ into $\mathbb{S}$ and is a contraction map. So,
by the contraction mapping theorem, there exists $(w,h)\in
\mathbb{S}$, such that $(w,h)=\mathbb{A}(w,h)$. Direct computation
yields
\begin{eqnarray*}
&&\Bigl|\int_{\R^3} \frac{H_{\va}(u_2,u_3,u_4,v_4,U)}{\va^4}
\varphi+\dis\frac{\bar
H_{\va}(v_1,v_3,v_4,u_4)}{\va^4}\psi\Bigr|\\
&\le& C\|(\varphi,\psi)\|,\,\,\forall\,\,(\varphi,\psi)\in
H^1_r(\R^3)\times H^1_r(\R^3).
\end{eqnarray*}
As a result, we see
\begin{equation}\label{eq07}
\|(w,h)\|\le C\va^4.
\end{equation}

Now we  consider the asymptotic behavior of $u_i,v_i,
(i=1,\cdots,4)$ at infinity. We claim that for any fixed small
$\tau>0$, there exists a positive  constant $C$ depending on $\tau, u_i,v_i,
(i=1,\cdots,4)$ such that
\begin{equation}\label{eq09}
|u_i(r)|+|v_i(r)|\le C e^{-(1-\tau)r}, \,\,(i=1,\cdots,4),\,\,
\,\forall\,\,r>1.
\end{equation}

Indeed,  by induction, we  suppose $|v_{i-1}|\le C_{i-1}
e^{-(1-\tau)r}$. Since
\begin{eqnarray*}
&&-\Delta
e^{-(1-\tau)r}+e^{-(1-\tau)r}-3U^{2}e^{-(1-\tau)r}\\
&=&\Bigl(1-(1-\tau)^2+\frac{N-1}{r}-3U^{2}\Bigr)e^{-(1-\tau)r},
\end{eqnarray*}
we can choose $\bar C_{i}, R_i$ depending on $u_i,\tau, i$ and
$C_{i-1}$ such that $\bar C_ie^{-(1-\tau)r}$ is a super-solution of
the first equation of \eqref{eq01} on $\R^3\setminus B_{R_i}(0)$. By
comparison theory of elliptic equations, we conclude
$$
u_i\le \bar C_ie^{-(1-\tau)r},\,\,\forall\,\,r\ge R_i.
$$
With the same argument, we can also prove that
$$
u_i\ge -\bar C_ie^{-(1-\tau)r},\,\,\forall\,\,r\ge R_i.
$$
Hence, we can choose $C_i$ depending on $u_i,\tau,i,C_{i-1}$ such
that
$$
|u_i(r)|\le  C_ie^{-(1-\tau)r},\,\,\forall\,\,r>1.
$$

Similarly, we can prove that $|\tilde v_1|\le Ce^{-(1-\tau)r}$,
$|c(x)|\le Ce^{-(1-\tau)r}$ and also $|v_i|\le C_i e^{-(1-\tau)r}$
for $r>1$.

The above results can be summarized as
\begin{proposition}\label{pro2.1}
 There exists $\varepsilon_0>0$ such that for $\varepsilon \in
(-\varepsilon_0,\,\varepsilon_0)$, problem \eqref{eqmain} has a
solution $(U_\varepsilon,\,v_\varepsilon)\in H^1_r(\R^3)\times
H^1_r(\R^3)$ satisfying $U_\varepsilon\to U$, $v_\varepsilon \to 0$
in $H^1_r$ as $\varepsilon \to 0$. Moreover,
\begin{equation}\label{eq 22}
\begin{array}{ll}
U_\varepsilon= U+
\sum\limits_{i=2}^{4}\dis\frac{\varepsilon^{i}}{i!}u_{i}+w,
\,\,\,v_\varepsilon=
\sum\limits_{i=1}^{4}\dis\frac{\varepsilon^{i}}{i!}v_{i}+h.
\end{array}
\end{equation}
Here
\begin{equation}\label{eq 23}
\|(w,h)\|\le \tilde C\va^4,
\end{equation}
$\tilde C>0$ is independent of $\va$. $u_i$ and $v_i$ satisfy
\begin{equation}\label{eq23}
|u_i(r)|+|v_i(r)|\le  C e^{-(1-\tau)r}, \,\, \,\forall\,\,r>1,
\end{equation}
where $\tau>0$ is any fixed small constant, $ C$ depends on $\tau,
u_i, v_i, \,(i=1,\cdots,4)$.
\end{proposition}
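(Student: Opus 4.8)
The plan is to realize $(U_\va,v_\va)$ as a truncated power series in $\va$ about the approximate pair $(U,0)$, corrected by a small remainder produced through a contraction argument carried out entirely in the radial spaces. First I would pin down the profile terms: solving the two decoupled linear problems $-\Delta u_1+u_1-3U^2u_1=0$ and $-\Delta\tilde v_1+\tilde v_1=-U$ in $H^1_r(\R^3)$ gives $u_1=0$ (the only radial element of $\mathrm{span}\{\partial U/\partial x_i\}$) and a negative $\tilde v_1$; I would then replace $\tilde v_1$ by $v_1=\tilde v_1+\va^2 c$, with $c$ solving $-\Delta c+c=\tilde v_1^3$, so that $-\Delta v_1+v_1=-U+\va^2\tilde v_1^3$. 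This \emph{second modification} is exactly the device that puts the extra term $\va^2\tilde v_1^3$ on the right of the $v_1$-equation, which at order $\va^3$ cancels the otherwise uncontrolled cubic term $\va^3\tilde v_1^3$ in the residual of the $v$-equation. Next, for $k=2,3,4$ I would define $(u_k,v_k)\in H^1_r(\R^3)\times H^1_r(\R^3)$ through the linear system \eqref{eq01}: the $v_k$-equations are solved by inverting $-\Delta+1$, while the $u_k$-equations are solved by the Fredholm alternative, using that the relevant linearized operator $\Delta-1+3U^2$ has kernel $\mathrm{span}\{\partial U/\partial x_i:i=1,2,3\}$ (by the non-degeneracy of $U$), which consists of non-radial functions, so that the radial right-hand sides are automatically orthogonal to it. One checks that $u_1=0$ forces $v_2=0$.

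With the $u_i,v_i$ fixed, I would substitute the ansatz \eqref{eq02} into \eqref{eqmain}; using the defining equations for the $u_i,v_i$ the low-order terms cancel, leaving the system \eqref{eq04} for the remainder $(w,h)$, whose right-hand side splits into a fixed source $\va^{-4}(H_\va,\bar H_\va)$, a part $(l_\va,\bar l_\va)$ linear in $(w,h)$ with coefficients of size $O(\va)$, and a superlinear part $(R_\va,\bar R_\va)$. A direct expansion — using $v_2=0$ and $|v_1-\tilde v_1|=O(\va^2)$ — gives $H_\va=O(\va^4)$, hence $|\va^{-4}H_\va|\le C$, and $\bar H_\va=O(\va^5)$, hence $\va^{-4}\bar H_\va=O(\va)$. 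Since $\mathbb{L}=(-\Delta+1-3U^2,\,-\Delta+1)$ is boundedly invertible on $H^1_r(\R^3)\times H^1_r(\R^3)$, I would set $\mathbb{A}(w,h)=\mathbb{L}^{-1}$ applied to the right-hand side of \eqref{eq04}, and check that $\mathbb{A}$ maps the ball $\mathbb{S}=\{\|(w,h)\|\le|\va|^{-\sigma}\}$ into itself and is a contraction there, with contraction constant $O(\va)$ coming from $(l_\va,\bar l_\va)$. Banach's fixed-point theorem then produces $(w,h)\in\mathbb{S}$ with $(w,h)=\mathbb{A}(w,h)$, i.e.\ \eqref{eq02} solves \eqref{eqmain}. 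To upgrade the crude bound $|\va|^{-\sigma}$ to $\|(w,h)\|\le\tilde C\va^4$ of \eqref{eq 23}, I would pair the remainder equation with an arbitrary $(\varphi,\psi)\in H^1_r(\R^3)\times H^1_r(\R^3)$: the source terms pair with it with constant $C$ independent of $\va$, while — since $(w,h)$ already lies in $\mathbb{S}$ — the linear and superlinear contributions are of lower order; dividing out gives the sharp bound.

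It remains to establish the decay \eqref{eq23}, which I would prove by induction on $i$. Granting $|v_{i-1}|\le C_{i-1}e^{-(1-\tau)r}$, a direct computation gives
$$
(-\Delta+1-3U^2)\,e^{-(1-\tau)r}=\Bigl(1-(1-\tau)^2+\frac{2(1-\tau)}{r}-3U^2\Bigr)e^{-(1-\tau)r},
$$
whose right-hand side is bounded below by a positive multiple of $e^{-(1-\tau)r}$ once $r\ge R_i$ with $R_i$ large, because $U$ decays exponentially. Hence a suitable $\bar C_i e^{-(1-\tau)r}$ is a super-solution of the $u_i$-equation on $\R^3\setminus B_{R_i}(0)$, and the comparison principle gives $u_i\le\bar C_i e^{-(1-\tau)r}$ there; the matching lower bound follows symmetrically, and combined with boundedness on $B_{R_i}(0)$ this yields $|u_i|\le C_i e^{-(1-\tau)r}$ for $r>1$. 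The same comparison argument applied to $-\Delta+1$ handles $\tilde v_1$, $c$, and all the $v_i$. Assembling these estimates, together with $\|(w,h)\|\le\tilde C\va^4\to 0$, gives the expansion \eqref{eq 22}, the bounds \eqref{eq 23}--\eqref{eq23}, and in particular $U_\va\to U$, $v_\va\to 0$ in $H^1_r$, which is the assertion of the Proposition.

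I expect the main obstacle to be not any single analytic step but the bookkeeping that makes the whole scheme self-consistent: one has to choose both the length of the expansion (four terms) and the $\va^2 c$ correction inside $v_1$ so precisely that the residual sources come out $O(1)$ in the first equation and $O(\va)$ in the second — which is exactly what forces the remainder down to size $\va^4$, small enough not to interfere later with the main terms of the reduced functional obtained in the second reduction. The functional-analytic ingredients (invertibility of $\mathbb{L}$ on radial functions via the non-degeneracy of $U$, the $O(\va)$ contraction estimate, the comparison principle for the decay) are routine; lining up the powers of $\va$ is the delicate part.
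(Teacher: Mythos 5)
Your proposal is correct and follows essentially the same route as the paper's proof: the same iterative construction of the profile terms $(u_i,v_i)$ with the $\va^2 c$ correction inside $v_1$, the same contraction argument for $(w,h)$ in the ball $\mathbb{S}$ followed by the duality pairing that upgrades the bound to $C\va^4$, and the same comparison-principle induction for the exponential decay. (Your coefficient $2(1-\tau)/r$ in the super-solution computation is in fact the accurate one; the paper's $\tfrac{N-1}{r}$ omits the harmless factor $(1-\tau)$.)
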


With the same argument we can also construct a solution for
problem~\eqref{eqmain2} which is linearly coupled by three
equations.

The main result is
\begin{proposition}\label{pro2.2}
 There exists $\varepsilon_0>0$ such that for $\varepsilon \in
(-\varepsilon_0,\,\varepsilon_0)$, problem \eqref{eqmain2} has a
solution $(U_\varepsilon,\,v_\varepsilon,\omega_\va)\in
(H^1_r(\R^3))^3$ satisfying $U_\varepsilon\to U$, $v_\varepsilon \to
0$ and $\omega_\varepsilon \to 0$ in $H^1_r$ as $\varepsilon \to 0$.
Moreover,
\begin{equation}\label{eq 25}
\begin{array}{ll}
U_\varepsilon= U+
\sum\limits_{i=2}^{4}\dis\frac{\varepsilon^{i}}{i!}u_{i}+w,
\,\,\,v_\varepsilon=
\sum\limits_{i=1}^{4}\dis\frac{\varepsilon^{i}}{i!}v_{i}+h,\,\,\,\omega_\varepsilon=
\sum\limits_{i=1}^{4}\dis\frac{\varepsilon^{i}}{i!}\omega_{i}+g.
\end{array}
\end{equation}
Here
\begin{equation}\label{eq 26}
\|(w,h,g)\|=:\|w\|_{H^1(\R^3)}+\|h\|_{H^1(\R^3)}+\|g\|_{H^1(\R^3)}\le
\bar C\va^4,
\end{equation}
$\bar C>0$ is independent of $\va$. $u_i,\,v_i$ and $\omega_i$ satisfy
\begin{equation}\label{eq231}
|u_i(r)|+|v_i(r)|+|\omega_i(r)|\le  C e^{-(1-\tau)r}, \,\,
\,\forall\,\,r>1,
\end{equation}
where $\tau>0$ is any fixed small constant, $ C$ depends on $\tau,
u_i, v_i, \omega_i\,(i=1,\cdots,4)$.
\end{proposition}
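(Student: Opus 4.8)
The plan is to run, essentially verbatim, the construction preceding Proposition~\ref{pro2.1}, with the pair $(U,0)$ replaced by the triple $(U,0,0)$. Since \eqref{eqmain2} is invariant under interchanging its $v$- and $\omega$-components, I would look for a solution with $v_\va=\omega_\va$, so that \eqref{eqmain2} collapses to the two-equation system $-\Delta u+u=u^3-2\va v$, $-\Delta v+v=v^3-\va(u+v)$; equivalently one may carry all three components and merely observe that the whole scheme respects this symmetry. For the profile functions I would take $u_1=0$ (forced in $H^1_r(\R^3)$ by the non-degeneracy of $U$), let $\tilde v_1\in H^1_r(\R^3)$ solve $-\Delta\tilde v_1+\tilde v_1=-U$ (so $\tilde v_1<0$), let $c\in H^1_r(\R^3)$ solve $-\Delta c+c=\tilde v_1^3$, and set $v_1=\omega_1=\tilde v_1+\va^2 c$; then for $k\ge2$ I would define $u_k,v_k\in H^1_r(\R^3)$ inductively, in the spirit of \eqref{eq01}, by solving
$$
-\Delta u_k+u_k-3U^2u_k=-k\,(v_{k-1}+\omega_{k-1}),\qquad -\Delta v_k+v_k=-k\,(u_{k-1}+\omega_{k-1}),
$$
which is possible because $\mathbb{L}$ is invertible on $H^1_r(\R^3)\times H^1_r(\R^3)$, and put $\omega_k=v_k$. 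Note that, unlike in the two-component case (where $v_2=0$), here $v_2$ solves $-\Delta v_2+v_2=-2(u_1+\omega_1)=-2v_1\not\equiv0$ and is nonzero; I will check below that this causes no trouble.

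Next I would insert the ansatz
$$
(U_\va,v_\va,\omega_\va)=\Bigl(U+\sum_{i=2}^{4}\frac{\va^i}{i!}u_i+\va^4w,\;\sum_{i=1}^{4}\frac{\va^i}{i!}v_i+\va^4h,\;\sum_{i=1}^{4}\frac{\va^i}{i!}\omega_i+\va^4g\Bigr)
$$
into \eqref{eqmain2}. Using the defining equations for $U,u_i,v_i,\omega_i$ together with $v_1-\tilde v_1=\va^2c=O(\va^2)$, all terms of order $\le\va^4$ cancel, and $(w,h,g)$ is seen to satisfy a system $\mathbb{L}(w,h,g)=F_\va+\mathcal{T}_\va(w,h,g)$ in $(L^2_r(\R^3))^3$, where $F_\va$ is uniformly bounded --- when the cubic nonlinearities are Taylor-expanded, the a priori $O(\va^{-1})$ contributions in the $h$- and $g$-equations combine into $\va^{-1}(v_1^3-\tilde v_1^3)=O(\va)$ --- while $\mathcal{T}_\va$ gathers a part linear in $(w,h,g)$ of operator norm $O(\va)$ (coming from the linear coupling $-\va^5(h+g)$, and likewise in the other equations, divided by $\va^4$) together with the superlinear terms. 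Inverting $\mathbb{L}$ I would recast this as a fixed point $(w,h,g)=\mathbb{A}(w,h,g)$ and check, exactly as for Proposition~\ref{pro2.1}, that $\mathbb{A}$ maps the ball $\mathbb{S}=\{\|(w,h,g)\|\le|\va|^{-\sigma}\}$ into itself and is a contraction for $\va$ small, so Banach's fixed-point theorem yields a unique $(w,h,g)\in\mathbb{S}$. Pairing the equation with test triples in $(H^1_r(\R^3))^3$ and using $\|F_\va\|\le C$ in $(L^2_r(\R^3))^3$ then sharpens the estimate to $\|(w,h,g)\|\le C\va^4$, which is \eqref{eq 26}.

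Finally, the decay bound \eqref{eq231} follows from the induction already used for \eqref{eq09}: assuming $|v_{i-1}|+|\omega_{i-1}|\le C_{i-1}e^{-(1-\tau)r}$, for a suitable $\bar C_i$ the function $\bar C_ie^{-(1-\tau)r}$ is, outside a large ball, a supersolution of each of the scalar equations for $u_i,v_i,\omega_i$ (because $1-(1-\tau)^2+\frac{2}{r}-3U^2>0$, respectively $1-(1-\tau)^2+\frac{2}{r}>0$, for $r$ large), so the comparison principle gives $|u_i|+|v_i|+|\omega_i|\le\bar C_ie^{-(1-\tau)r}$ for $r\ge R_i$, hence for $r>1$ after enlarging the constant; the auxiliary functions $\tilde v_1$ and $c$ decay in the same way. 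The only step I expect to demand genuine --- though still routine --- attention is the reduction: one must recheck that the extra linear coupling in \eqref{eqmain2}, which renders $v_2$ nonzero, does not spoil the cancellation at the $\va^4$ level. It does not, since $v_2$ and $\omega_2$ are fixed $H^1_r(\R^3)$ functions entering the ansatz only at order $\va^2$, so after division by $\va^4$ the worst surviving right-hand side terms are still $O(1)$ in $L^2_r(\R^3)$, which is precisely what the contraction estimate and the final bound require; everything else is a direct transcription of the two-component argument.
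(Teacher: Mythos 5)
Your proposal is correct and follows essentially the same route as the paper: the same first-order profiles $\tilde v_1=\tilde\omega_1$ with the $\va^2c$ correction absorbing the $\va^3\tilde v_1^3$ term, the same inductive definition of $(u_k,v_k,\omega_k)$ via the invertibility of $\mathbb{L}$, the same contraction argument on the ball $\mathbb{S}$, and the same comparison-principle induction for the exponential decay. Your explicit use of the $v\leftrightarrow\omega$ symmetry (setting $\omega_k=v_k$) is only a presentational shortcut --- the paper's recursion \eqref{eq011} preserves $v_k=\omega_k$ anyway --- and your check that the now-nonzero $v_2$ only contributes an $O(1)$ (rather than $O(\va)$) right-hand side after division by $\va^4$, which is all the fixed-point and the bound \eqref{eq 26} require, correctly addresses the one point the paper leaves implicit.
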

\begin{proof}
Solve
$$
\left\{
\begin{array}{ll}
-\Delta u_1+u_1-3U^2u_1=0,\hspace{1cm}& x\in \R^3, \vspace{0.2cm}\\
-\Delta \tilde v_1+\tilde v_1=-U,& x\in\R^3,\vspace{0.2cm}\\
-\Delta \tilde \omega_1+\tilde \omega_1=-U,& x\in\R^3,
\end{array}
\right.
$$
then $u_1=0$, $\tilde v_1\in H^1_r(\R^3)$, $\tilde \omega_1 \in
H^1_r(\R^3)$. Let $c(x),\,d(x)\in H^1_r(\R^3)$ satisfy
$$
-\Delta c(x)+c(x)=\tilde  v_1^3,\,\,-\Delta d(x)+d(x)=\tilde
\omega_1^3,
$$
we see that $v_1=\tilde v_1+\va^2 c(x)$ and $\omega_1=\tilde
\omega_1+\va^2 d(x)$ solve
$$
-\Delta v_1+v_1=-U+\va^{2}\tilde v_1^3,\,\,\,-\Delta
\omega_1+\omega_1=-U+\va^{2}\tilde \omega_1^3.
$$

For $k\ge 2$, we can define $(u_k,v_k,\omega_k)\in (H^1_r(\R^3))^3$
by solving
\begin{equation}\label{eq011}
\left\{
\begin{array}{ll}
-\Delta u_k+u_k-3U^2u_k=-k(v_{k-1}+\omega_{k-1}),\hspace{1cm}& x\in \R^3, \vspace{0.2cm}\\
-\Delta v_k+v_k=-k (u_{k-1}+\omega_{k-1}),& x\in\R^3,\vspace{0.2cm}\\
-\Delta \omega_k+\omega_k=-k (u_{k-1}+v_{k-1}),& x\in\R^3.
\end{array}
\right.
\end{equation}
Proceeding as we prove Proposition~\ref{pro2.1}, we can find
$(w,h,g)\in (H^1_r(\R^N))^3$ such that \eqref{eq 26} and
\eqref{eq231} hold true and $(U_\va,v_\va,\omega_\va)$ defined by
\eqref{eq 25} satisfies problem \eqref{eqmain2}.
\end{proof}

\section{ Segregated vector solutions for 2 coupled Schr\"{o}dinger  system }
We will use $(U_\varepsilon,v_\varepsilon)$ to construct multi-bump
solutions for \eqref{eqmain}. It follows from
Proposition~\ref{pro2.1} that $(U_\va,v_\va)$ has the form
\begin{equation}\label{eq 22}
\begin{array}{ll}
U_\varepsilon= U+ \va^2 p_\va(r)+w, \,\,\,v_\varepsilon= \va
q_\va(r)+h,
\end{array}
\end{equation}
where
$$
p_\va(r)\le Ce^{-(1-\tau)r},\,\,q_\va(r)\le
Ce^{-(1-\tau)r},\,\,\,\|(w,h)\|\le C\va^4.
$$
Here $C$ is independent of $\va$,  and $\tau>0$ is defined in \eqref{eq23}.

For any integer $\ell\ge 2$, set
$$
m=2\sin\frac{\pi}{\ell},\,\,n=\sqrt{2\Bigl(1-\cos\frac{\pi}{\ell}\Bigr)}.
$$
Then it can be easily check that
$$
m>n>0,\quad 2<\frac{m}{m-n}<4.
$$
 Let $x^j$ and $y^j$ be defined by \eqref{1-29-9}
and \eqref{1-29-10} respectively. In this section,  we assume
\begin{equation}\label{1-20-5}
(r,\rho)\in \mathcal{D}_\va\times  \mathcal{D}_\va =:  \Bigl[ \frac
{|\ln\va|}{m-n+\frac{\mu\ln|\ln\va|}{|\ln\va|}},\,\frac
{|\ln\va|}{m-n} \Bigr]\times \Bigl[ \frac
{|\ln\va|}{m-n+\frac{\mu\ln|\ln\va|}{|\ln\va|}},\,\frac
{|\ln\va|}{m-n} \Bigr],
\end{equation}
where the constant $\mu>m-n$. For any function $W:\R^3\to\R$  and
$\xi\in\R^3$, we define $W_{\xi}=W(x-\xi)$. Set
$$
U_{\va,r}=\sum\limits_{i=1}^\ell
U_{\va,x^i},\,\,v_{\va,r}=\sum\limits_{i=1}^\ell v_{\va,x^i},\,\,
U_{\va,\rho}=\sum\limits_{i=1}^\ell
U_{\va,y^i},\,\,v_{\va,\rho}=\sum\limits_{i=1}^\ell v_{\va,y^i},
$$
and
\[
 Y_{\va,j}=\frac{\partial U_{\va,x^j}}{\partial r},\,\,
Z_{\va,j}=\frac{\partial U_{\va,y^j}}{\partial \rho},\quad
j=1,\cdots,\ell.
  \]

Define
\[
\begin{split}
H_{s}=\bigl\{ u: \,& u\in H^1(\R^3), u\;\text{is even in} \;x_h, h=2,3,\\
& u(r\cos\theta , r\sin\theta, x')= u(r\cos(\theta+\frac{2\pi
j}\ell) , r\sin(\theta+\frac{2\pi j}\ell), x') \bigr\},
\end{split}
\]
and
\begin{equation}\label{E}
 \mathbb{E}=\Bigl\{ (u,v)\in H_{s}\times H_{s},\;
\sum\limits_{j=1}^\ell\int_{\R^3} U_{\va,x^j}^2  Y_{\va,j}
u=0,\,\,\sum\limits_{j=1}^\ell\int_{\R^3} U_{\va,y^j}^{2} Z_{\va,j}
v=0 \Bigr\}.
\end{equation}

To prove Theorem~\ref{main}, it suffices to prove
\begin{proposition}\label{pro2}
For any integer $\ell\ge 2$, there exists $\varepsilon_0>0$ such
that for $\va\in (0,\,\va_0)$, problem~\eqref{eqmain} has a solution
$(u,v)$ with the form
$$
u=U_{\va,r}+v_{\va,\rho}+\varphi_\va,\,\,v=U_{\va,\rho}+v_{\va,r}+\psi_\va,
$$
where $(\varphi_\va,\psi_\va)\in \mathbb{E}$ satisfies
$\|(\varphi_\va,\,\psi_\va)\|=o(\va^{\frac{m}{m-n}})$.
\end{proposition}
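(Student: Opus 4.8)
\noindent The plan is a Lyapunov--Schmidt reduction built on the radial solution $(U_\va,v_\va)$ of Proposition~\ref{pro2.1}, reducing matters to a two-dimensional critical point problem on $\mathcal D_\va\times\mathcal D_\va$. Solutions of~\eqref{eqmain} are the critical points of
\[
I(u,v)=\frac12\|u\|_{\R^3}^2+\frac12\|v\|_{\R^3}^2-\frac14\int_{\R^3}u^4-\frac14\int_{\R^3}v^4+\va\int_{\R^3}uv,
\]
and I would look for one of the asserted form $u=U_{\va,r}+v_{\va,\rho}+\varphi$, $v=U_{\va,\rho}+v_{\va,r}+\psi$ with $(\varphi,\psi)\in\mathbb E$ and $(r,\rho)\in\mathcal D_\va\times\mathcal D_\va$. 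The virtue of this ansatz is that each translate $(U_{\va,x^i},v_{\va,x^i})$ and $(U_{\va,y^i},v_{\va,y^i})$ still solves~\eqref{eqmain} by translation invariance, so superposing them in this ``crossed'' pattern makes the linear coupling terms $-\va v$, $-\va u$ cancel identically; the only residue in the first equation is the purely nonlinear interaction error
\[
\mathcal E_1=\sum_i U_{\va,x^i}^3+\sum_i v_{\va,y^i}^3-\Bigl(\sum_i U_{\va,x^i}+\sum_i v_{\va,y^i}\Bigr)^3,
\]
and likewise $\mathcal E_2$ in the second equation. From $|x^i-x^j|\ge mr$, $|x^i-y^j|\ge n\min(r,\rho)$, $v_\va=O(\va)$, the decay~\eqref{eq23}, and $(r,\rho)\in\mathcal D_\va\times\mathcal D_\va$, one checks that the two leading contributions to $(\mathcal E_1,\mathcal E_2)$ --- the $\sim e^{-mr}$ term from two bumps of the same component, and the $\sim\va e^{-nr}$ term from a bump of one component meeting the $O(\va)$ tail of the other --- are of the same order $\va^{m/(m-n)}$ up to powers of $|\ln\va|$; moreover, working throughout in $H_s$ cuts the approximate kernel down to the two directions $\sum_j Y_{\va,j}$ and $\sum_j Z_{\va,j}$, so that only the two orthogonality conditions in~\eqref{E}, and eventually only a \emph{two}-dimensional bifurcation equation, survive.

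I would then carry out the finite-dimensional reduction in the standard way. Splitting $I'(u,v)=0$ into its $\mathbb E$-component and its component along $\mathrm{span}\{(U_{\va,x^j}^2Y_{\va,j},0),\,(0,U_{\va,y^j}^2Z_{\va,j})\}$, the first (auxiliary) equation is solved by the contraction mapping principle, provided the linearization
\[
(\varphi,\psi)\longmapsto\bigl(-\Delta\varphi+\varphi-3(U_{\va,r}+v_{\va,\rho})^2\varphi+\va\psi,\ -\Delta\psi+\psi-3(U_{\va,\rho}+v_{\va,r})^2\psi+\va\varphi\bigr)
\]
is invertible on $\mathbb E$ with inverse bounded uniformly in $\va$. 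Since $v_\va=O(\va)$ and the coupling is $O(\va)$, this operator is a small perturbation of the decoupled one $\bigl(-\Delta+1-3U_{\va,r}^2,\,-\Delta+1-3U_{\va,\rho}^2\bigr)$, for which the coercivity estimate on $H_s\cap\mathbb E$ follows from the non-degeneracy of $U$ via the usual contradiction/blow-up argument: a hypothetical degenerating sequence, rescaled around each bump centre, would converge to an element of $\bigoplus_j\mathrm{Kernel}(\mathbb L)$, which the $H_s$-symmetry rules out in the three translation directions and the orthogonality in $\mathbb E$ rules out in the $\partial_r$ direction. Together with the estimate on $(\mathcal E_1,\mathcal E_2)$ and the superlinearity of the nonlinearity, the contraction mapping yields, for each $(r,\rho)\in\mathcal D_\va\times\mathcal D_\va$, a unique small $(\varphi_\va,\psi_\va)=(\varphi_{\va,r,\rho},\psi_{\va,r,\rho})\in\mathbb E$ depending smoothly on $(r,\rho)$, with $\|(\varphi_\va,\psi_\va)\|$ controlled by $\|(\mathcal E_1,\mathcal E_2)\|$; at the critical point produced below, where $r,\rho\sim|\ln\va|/(m-n)$, the $1/r$-type prefactors in the interaction integrals sharpen this to $\|(\varphi_\va,\psi_\va)\|=o(\va^{m/(m-n)})$.

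It remains to solve the bifurcation equation, which by the $H_s$-symmetry is the two-dimensional system $\nabla_{(r,\rho)}F_\va(r,\rho)=0$ for $F_\va(r,\rho)=I\bigl(U_{\va,r}+v_{\va,\rho}+\varphi_\va,\,U_{\va,\rho}+v_{\va,r}+\psi_\va\bigr)$; using moreover that~\eqref{eqmain} is symmetric under interchanging $u\leftrightarrow v$ together with the rotation $T_\ell$, one has $F_\va(r,\rho)=F_\va(\rho,r)$ and may look for a solution along the diagonal $r=\rho$. The energy expansion established in the appendix gives
\[
F_\va(r,\rho)=\ell\,I(U_\va,v_\va)+\Phi_\va(r,\rho)+o(\va^{m/(m-n)}),
\]
where $\Phi_\va$ is an explicit combination of the competing exponentially small interactions --- the attractive same-component terms $\sim e^{-mr},e^{-m\rho}$ and the repulsive cross-component term $\sim\va e^{-|x^1-y^1|}$ (which equals $\va e^{-nr}$ on the diagonal) coming from the coupling --- whose coefficients have opposite sign and which balance precisely at the scale $|\ln\va|/(m-n)$. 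The endpoints of $\mathcal D_\va$ are tailored to this balance so that, along the boundary of $\mathcal D_\va\times\mathcal D_\va$, one of the two scales strictly dominates and $\nabla_{(r,\rho)}F_\va$ points inward; a Brouwer-degree (equivalently, max--min) argument then produces an interior critical point $(r_\va,\rho_\va)$ with $r_\va=\rho_\va=\frac{|\ln\va|}{m-n}(1+o(1))$. At such a point all Lagrange multipliers vanish, so $(U_{\va,r_\va}+v_{\va,\rho_\va}+\varphi_\va,\,U_{\va,\rho_\va}+v_{\va,r_\va}+\psi_\va)$ solves~\eqref{eqmain} and enjoys all the stated properties.

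\medskip\noindent The main obstacle is the last step. Using the refined approximate solution $(U_\va,v_\va)$ of Section~2 in place of the crude $(U,U)$ is essential: otherwise, as the Introduction notes, the linear-coupling error would dominate the interaction terms and no critical point could be extracted. Even with $(U_\va,v_\va)$, the two competing scales $e^{-mr}$ and $\va e^{-nr}$ coincide only inside the thin window $\mathcal D_\va$, of width merely $O(\ln|\ln\va|)$, so one needs the energy expansion accurate enough to pin down the signs and relative sizes of the leading coefficients and thus keep the critical point strictly interior to $\mathcal D_\va\times\mathcal D_\va$ --- which is exactly why the constraint $\mu>m-n$ is imposed. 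The remaining ingredients --- the contraction mapping, the decay estimates of Section~2, and the bookkeeping of the interaction integrals --- are by now routine.
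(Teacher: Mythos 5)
Your proposal follows the paper's strategy essentially step for step: the same ansatz built on the refined radial pair $(U_\va,v_\va)$ so that the linear coupling cancels exactly in the residual (this is precisely the content of Remark~\ref{re3.2}), the same invertibility lemma proved by a blow-up/non-degeneracy contradiction (one small slip: the kernel of $\mathbb{L}$ is three-dimensional, and $\partial_r$ at $x^1$ \emph{is} the $x_1$-translation; evenness in $x_2,x_3$ kills two directions and the constraint in $\mathbb{E}$ kills the third), the same contraction-mapping reduction with the same error bound, and the same energy expansion leading to a two-parameter problem on $\mathcal{D}_\va\times\mathcal{D}_\va$. The one genuine deviation is your proposal to use the $u\leftrightarrow v$ symmetry composed with $T_\ell$ to restrict the reduced functional to the diagonal $r=\rho$. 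This is legitimate in principle (for a $\mathbb{Z}_2$-invariant $C^1$ function on the square, $\partial_r F=\partial_\rho F$ on the diagonal, so an interior critical point of the restriction is a true critical point), and it would shorten the boundary analysis; but it requires verifying that the whole construction --- the space $\mathbb{E}$, the contraction set, hence the fixed point $(\varphi_{r,\rho},\psi_{r,\rho})$ --- is equivariant under this transformation, which you do not address. The paper instead compares the maximum of $G$ over the closed square with its values on all four boundary pieces; your stronger assertion that $\nabla_{(r,\rho)}F_\va$ ``points inward'' on the boundary is not substantiated and is not what the paper proves.

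One quantitative point deserves attention. You justify $\|(\varphi_\va,\psi_\va)\|=o(\va^{m/(m-n)})$ by saying the $1/r$-prefactors sharpen the bound at $r,\rho\sim|\ln\va|/(m-n)$. But the maximizer does not sit at $|\ln\va|/(m-n)$: balancing $\va e^{-nr}$ against $e^{-mr}/r$ forces $r_\va$ to lie a positive multiple of $\ln|\ln\va|$ \emph{below} $|\ln\va|/(m-n)$ (this is exactly why the paper's test point $\rho_{\bar\theta}$ carries the shift $\tfrac{\mu\bar\theta}{(m-n)^2}\ln|\ln\va|$), and at such a point $e^{-mr_\va}/(mr_\va)$ is of order $\va^{m/(m-n)}$ times a \emph{positive} power of $|\ln\va|$. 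So the reduction estimate \eqref{2-20-4} alone does not yield $o(\va^{m/(m-n)})$ at the critical point; what it does yield, and what the argument actually needs, is that the energy error $\|f_\va\|\,\|(\varphi,\psi)\|+\|(\varphi,\psi)\|^2=O(\va^{\frac{m}{m-n}+\sigma})$ is negligible against the logarithmic gaps separating interior and boundary values of $G$. The paper's own statement carries the same imprecision, so this is not a defect peculiar to your write-up, but the reason you give for the norm bound is not correct as stated and should be replaced by the quadratic energy-error estimate.
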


Let
\[
\begin{array}{ll}
I(u,v)=&\dis\frac12\int_{\R^3} \bigl( |\nabla u|^2+ u^2+ |\nabla
v|^2+
v^2\bigl)\vspace{0.2cm}\\
&-\dis\frac1{4}\int_{\R^3}\bigl(u^{4}+v^{4}\bigl)+\va\int_{\R^3}uv,\quad
(u,v)\in H_s\times H_s,
\end{array}
\]
and
\[
J(\varphi,\psi)=
I(U_{\va,r}+v_{\va,\rho}+\varphi,U_{\va,\rho}+v_{\va,r}+\psi).
\]

Expand $J(\varphi,\psi)$ as follows:
\begin{equation}\label{1-3-10}
J(\varphi,\psi)=J(0,0)-l(\varphi,\psi)+ \frac12 \bar L(\varphi,\psi)
-R(\varphi,\psi), \quad (\varphi,\psi)\in \mathbb{E},
\end{equation}
where
\[
\begin{array}{ll}
l(\varphi,\psi)=&\dis\int_{\R^3}
\bigl((U_{\va,r}+v_{\va,\rho})^3-\sum\limits_{j=1}^\ell
U_{\va,x^j}^3 -\sum\limits_{j=1}^\ell
v_{\va,y^j}^3\bigr)\varphi\vspace{0.2cm}\\
&+\dis\int_{\R^3}
\bigl((U_{\va,\rho}+v_{\va,r})^3-\sum\limits_{j=1}^\ell
U_{\va,y^j}^3 -\sum\limits_{j=1}^\ell v_{\va,x^j}^3\bigr)\psi
\end{array}
\]
\[
\begin{split}
\bar L(\varphi,\psi)=&\int_{\R^3} \bigl(|\nabla
\varphi|^2+\varphi^2-
 3(U_{\va,r}+v_{\va,\rho})^{2} \varphi^2\bigr)\vspace{0.2cm}\\
 &+\int_{\R^3} \bigl(|\nabla \psi|^2+\psi^2-
 3(U_{\va,\rho}+v_{\va,r})^{2} \psi^2\bigr)+2\va\int_{\R^3}\varphi\psi,
\end{split}
\]
and
$$
R(\varphi,\psi)=\int_{\R^3}
\bigl((U_{\va,\rho}+v_{\va,r})\varphi^3+(U_{\va,\rho}+v_{\va,r})\psi^3\bigr)+\frac{1}4\int_{\R^N}(\varphi^4+\psi^4).
$$

\begin{remark}\label{re3.2}
Here, in the expression of the linear part $l(\varphi,\psi)$, there
are no terms from the coupled term $\va\int_{\R^3} uv$ since we use
 $(U_\va,\,v_{\va})$ to construct the vector solutions. We will see
later in the proof of Proposition~\ref{pro2} that  this choice of
the approximate  solution guarantees that the error terms of the
reduced functional are dominated by $\va^{\frac{m+\sigma}{m-n}}$,
which is of higher order small datum of the main terms. However, if
we use $(U, U)$ as an approximate solution, then in the expression
of $l(\varphi,\psi)$, the terms from the coupling like
$\va\int_{\R^N}(\sum_{j=1}^\ell
U_{x^j})\psi+\va\int_{\R^3}(\sum_{j=1}^\ell U_{y^j})\varphi$ will
appear, which implies $\|l(\varphi,\psi)\|=O(\va)$. Hence the error
terms of the reduced functional are of order $O(\va^2)$, which will
dominate the main terms and we have no way to solve the reduced
functional.
\end{remark}

It is easy to check that $\bar L(\varphi,\psi)$ can be generated  by
a bounded linear operator $L$ from $\mathbb{E}$ to $\mathbb{E}$,
which is defined as
\begin{eqnarray*}
\bigl\langle L(u, v),(\varphi,\psi)\bigr\rangle &=&\int_{\R^3}
\bigl(\nabla u \nabla \varphi+u\varphi-
 3(U_{\va,r}+v_{\va,\rho})^{2}u \varphi\bigr)\\
 &&+\int_{\R^3} \bigl(\nabla v\nabla \psi+v\psi-
 3(U_{\va,\rho}+v_{\va,r})^{2} v\psi\bigr)+\va\int_{\R^3}(u \psi+v\varphi).
\end{eqnarray*}

Now,  we discuss the  invertibility of $L$.
\begin{lemma}\label{2.2}
 There exists $\va_0>0$, such that for $\va\in (0,\,\va_0)$, there is a constant $\varrho>0$, independent of $\va$, satisfying
  that for any
 $(r,\rho)\in  \mathcal {D}_\va\times \mathcal {D}_\va$,
\[
 \|L(u,v)\|\ge \varrho\|(u,v)\|,\quad  (u,v)\in \mathbb{E}.
 \]
\end{lemma}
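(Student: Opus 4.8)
The plan is to argue by contradiction, exploiting the almost-block-diagonal structure of $L$ relative to the uncoupled linearized operator $\mathbb{L}$ at a single bump. Suppose the estimate fails: then there exist $\va_n\to 0^+$, parameters $(r_n,\rho_n)\in\mathcal{D}_{\va_n}\times\mathcal{D}_{\va_n}$, and $(u_n,v_n)\in\mathbb{E}$ with $\|(u_n,v_n)\|=1$ but $\|L(u_n,v_n)\|\to 0$. Testing the definition of $L$ against $(u_n,v_n)$ itself gives
\[
\int_{\R^3}\!\bigl(|\nabla u_n|^2+u_n^2-3(U_{\va,r}+v_{\va,\rho})^2u_n^2\bigr)
+\int_{\R^3}\!\bigl(|\nabla v_n|^2+v_n^2-3(U_{\va,\rho}+v_{\va,r})^2v_n^2\bigr)
+2\va_n\!\int_{\R^3}\!u_nv_n\to 0,
\]
so the coupling term $\va_n\int u_nv_n\to 0$ is negligible, and the problem decouples in the limit into two copies of the single-equation situation, one for $u_n$ centered at the configuration $x^1,\dots,x^\ell$ and one for $v_n$ centered at $y^1,\dots,y^\ell$.

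Next I would localize. Since $r_n,\rho_n\sim c|\ln\va_n|\to\infty$, the bumps $U_{\va,x^j}$ (resp. $U_{\va,y^j}$) separate from one another and escape to infinity, and by Proposition~\ref{pro2.1} the perturbation pieces $v_{\va,\rho}$, $v_{\va,r}$ are $O(\va_n)$ in norm and exponentially small pointwise, hence contribute nothing to the quadratic form in the limit. Using cutoff functions around each $x^j$ and each $y^j$ and translating $x-x^j\to x$, the standard concentration argument shows that, along a subsequence, the localized pieces $\chi_j u_n(\cdot+x^j)$ converge weakly in $H^1(\R^3)$ to a limit $U^{(j)}$ solving $\mathbb{L}U^{(j)}=0$, i.e.\ $U^{(j)}\in\mathrm{span}\{\partial_{x_i}U\}$; similarly for the $v_n$-pieces. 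The symmetry constraints built into $H_s$ (evenness in $x_2,x_3$ and the rotational equivariance under angle $2\pi/\ell$) force all the $x_2,x_3$-derivative components to vanish, so each $U^{(j)}$ is a multiple of $\partial_{x_1}U$, which after rotating back to the radial frame at $x^j$ is a multiple of $\partial_r U_{\va,x^j}=Y_{\va,j}$. But the orthogonality conditions defining $\mathbb{E}$, namely $\sum_j\int U_{\va,x^j}^2 Y_{\va,j}u_n=0$ and $\sum_j\int U_{\va,y^j}^2 Z_{\va,j}v_n=0$, pass to the limit and kill exactly these components, so $U^{(j)}\equiv 0$ for every $j$, and likewise all the $v_n$-limits vanish.

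The final step is to upgrade this weak/local vanishing to strong convergence to zero, contradicting $\|(u_n,v_n)\|=1$. Away from the bumps the potentials $3(U_{\va,r}+v_{\va,\rho})^2$ and $3(U_{\va,\rho}+v_{\va,r})^2$ are uniformly small, so on $\R^3$ minus a fixed number of large balls the quadratic form controls $\|u_n\|^2+\|v_n\|^2$ from below up to a factor close to $1$; combined with the decoupling identity above and the vanishing of all the localized limits, this forces $\|u_n\|+\|v_n\|\to 0$. I expect the main obstacle to be the bookkeeping in this localization: one must choose the cutoff radii growing slowly relative to $|\ln\va_n|$ so that (i) the balls around distinct $x^j$ (and distinct $y^j$, and between an $x^i$ and a $y^j$ — here the segregation $|x^i-y^j|\ge c|\ln\va_n|$ from the geometry is essential) remain disjoint, (ii) the cross terms $\int|\nabla\chi|^2u_n^2$ produced by differentiating the cutoffs are $o(1)$, and (iii) enough of the mass of $U_{\va,x^j}$ is captured inside its ball. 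Once the geometry of $\mathcal{D}_\va$ guarantees these separations uniformly, the argument closes in the standard way.
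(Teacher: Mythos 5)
Your proposal is correct and follows essentially the same contradiction/blow-up argument as the paper: normalize, translate to the bumps, identify each weak limit as a kernel element of $\mathbb{L}$, kill it using the evenness in $x_2,x_3$ together with the orthogonality constraints defining $\mathbb{E}$, and then contradict $\|(u_n,v_n)\|=1$ through the quadratic form. The only cosmetic difference is that the paper translates just to $x^1$ and $y^1$ and invokes the $2\pi/\ell$-equivariance of $H_s$ (plus the evenness of $H_s$-functions about the ray at angle $\pi/\ell$ to handle the $y$-bumps) rather than cutting off around all $2\ell$ bumps separately.
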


\begin{proof}
 Suppose to the contrary that there are $\va_n\to 0^+$ (as $n\to+\infty$),
$(r_{n},\rho_{n})\in  \mathcal {D}_{\va_n}\times \mathcal
{D}_{\va_n}$, and $(u_{n},v_{n})\in \mathbb{E}$, with
\begin{equation}\label{1-l21}
\bigl\langle L (u_{n},v_{n}),(\varphi,\psi)\bigr\rangle = o_n(1)
\|(u_{n},v_{n})\|\|(\varphi,\psi)\|,\quad \forall\;
(\varphi,\psi)\in \mathbb{E}.
\end{equation}
We may assume that $\|(u_n,v_n)\|=1$. We see from \eqref{1-l21},
\begin{equation}\label{22}
\begin{array}{ll}
&\dis\int_{\R^3} \bigl(\nabla u_n \nabla \varphi+u_n\varphi-
 3(U_{\va_n,r_n}+v_{\va_n,\rho_n})^{2}u_n \varphi\bigr)\vspace{0.2cm}\\
 &+\dis\int_{\R^3} \bigl(\nabla v_n\nabla \psi+v_n\psi-
 3(U_{\va_n,\rho_n}+v_{\va_n,r_n})^{2} v_n\psi\bigr)\vspace{0.2cm}\\
 &+\va_n\dis\int_{\R^3}(u_n \psi+v_n\varphi)=o_n(1)\|(\varphi,\psi)\|,\,\,\forall\,\,(\varphi,\psi)
\in \mathbb{E}.
\end{array}
\end{equation}
In particular,
\begin{equation}\label{24}
\begin{array}{ll}
 & \dis\int_{\R^3} \bigl(|\nabla u_n|^2+u_n^2-
 3(U_{\va_n,r_n}+v_{\va_n,\rho_n})^{2}u_n^2\bigr)\vspace{0.2cm}\\
 &+ \dis\int_{\R^3} \bigl(|\nabla v_n|^2+v_n^2-
 3(U_{\va_n,\rho_n}+v_{\va_n,r_n})^{2}v_n^2\bigr)+2\va_n\dis\int_{\R^3}u_nv_n
 =o_n(1),
\end{array}
\end{equation}
and
\begin{equation}\label{0-19-3}
\dis\int_{\R^3} \bigl(|\nabla u_n|^2+u_n^2+|\nabla
v_n|^2+v_n^2\bigr)=1.
\end{equation}

 Let
 $$ \bar u_n(x)= u_n(x-x^1),\,\,\,\bar v_n(x)= v_n(x-y^1).$$

We may assume the existence of $u$, such that as $n\to +\infty$,
\[
\bar u_n \to  u, \quad \text{weakly in}\;
H^1_{loc}(\R^3),\hspace{1cm} \bar u_n \to  u, \quad \text{strongly
in}\; L^2_{loc}(\R^3).
\]
Moreover, $ u$ is even in $x_h$, $h=2,3.$

By symmetry, we see
$$
\int_{\R^3}U_{\va_n,x_n^1}^{2}Y_{\va_n,1}u_n=0.
$$
It follows from $(U_{\va_n}, v_{\va_n})\to (U,0)$ in
$H^1(\R^3)\times H^1(\R^3)$ that
\begin{eqnarray*}
&&\Bigl|\int_{\R^3}U_{\va_n,x_n^1}^{2}Y_{\va_n,1}u_n-\int_{\R^3}U_{x_n^1}^{2}\frac{\partial
U_{x_n^1}}{\partial r_n}u_n\Bigl|\\
&\le&
\Bigl|\int_{\R^3}(U_{\va_n,x_n^1}^{2}-U_{x_n^1}^{2})Y_{\va_n,1}u_n\Bigl|
+\Bigl|\int_{\R^3}U_{x_n^1}^{2}\Bigl(Y_{\va_n,1}-\frac{\partial
U_{x_n^1}}{\partial r_n}\Bigr)u_n\Bigl|\to 0,
\,\,\,(n\to+\infty).
\end{eqnarray*}
Hence
$$
\int_{\R^3}U_{x_n^1}^{2}\frac{\partial U_{x_n^1}}{\partial
r_n}u_n\to 0,
$$
which implies
\begin{equation}\label{5-l21}
\int_{\R^3} U^{2} \frac{\partial U}{\partial x_1}  u =0.
\end{equation}

Let $ \varphi\in C_0^\infty(B_R(0))$  be even in $x_h$, $h=2,3$.
Define $ \varphi_n(x)=:  \varphi (x-x^1)\in C_0^\infty(B_R(x^1))$.
We may identify $\varphi_n(x)$ as elements in $H_s$ by redefining
the values outside $B_R(x^1)$ with the symmetry.

From the fact that $U_{\va_n}\to U$ and $v_{\va_n}\to 0$ in
$H^1(\R^3)$, we deduce
\begin{equation}\label{eq200}
\begin{array}{ll}
&\dis\int_{\R^3}(U_{\va_n,r_n}+v_{\va_n,\rho_n})^{2}u_n
\varphi_n\vspace{0.2cm}\\
=&\dis\int_{\R^3}(U^2_{\va_n,r_n}+2U_{\va_n,r_n}v_{\va_n,\rho_n}+v_{\va_n,\rho_n}^{2})u_n
\varphi_n=\dis\int_{\R^3}U^2u\varphi +o_n(1).
\end{array}
\end{equation}

 Then  choosing $( \varphi, \psi)=(\varphi_n,0)$ in \eqref{22} and
 considering \eqref{eq200},
we can use the argument in \cite{WY}, to prove that $u$ solves
\begin{equation}\label{eq2}
-\Delta u+u-3 U^{2}u=0,\hspace{1cm}x\in \R^3.
\end{equation}

 Since we
work in the space of functions which are even in $x_2$ and $x_3$, we
see $u=c\frac{\partial U}{\partial x_1}$ for some $c$, which implies
that $ u=0$ since $ u$ satisfies \eqref{5-l21}.

To deal with $v_n$, we first claim  that for any  $v(x)\in H_s$,
$v(x)$ is even with respect to the ray with an angle of $\pi/\ell$.

Indeed, suppose  that $|(x_1,x_2)|=a$, then
\begin{eqnarray*}
v(x)&=:&v(a\cos
(\frac{\pi}{\ell}+\theta),a\sin(\frac{\pi}{\ell}+\theta),x_3)\\
&=&v(a\cos
(\frac{\pi}{\ell}+\theta),-a\sin(\frac{\pi}{\ell}+\theta),x_3)\\
&=&v(a\cos
(-\frac{\pi}{\ell}-\theta),a\sin(-\frac{\pi}{\ell}-\theta),x_3)\\
&=&v(a\cos
(\frac{\pi}{\ell}-\theta),a\sin(\frac{\pi}{\ell}-\theta),x_3).
\end{eqnarray*}

Now as we deal with $u_n$,  we can check
\[
\bar v_n \to  0, \quad \text{weakly in}\;
H^1_{loc}(\R^3),\hspace{1cm} \bar v_n \to  0, \quad \text{strongly
in}\; L^2_{loc}(\R^3).
\]

Similar to \eqref{eq200},  using the fact that $U_{\va_n}\to U$
and $v_{\va_n}\to 0$ in $H^1(\R^3)$ as $n\to +\infty$, we  deduce
that
\begin{equation}\label{eq201}
\begin{array}{ll}
&\dis\int_{\R^3}(U_{\va_n,r_n}+v_{\va_n,\rho_n})^{2}u^2_n+\dis\int_{\R^3}(U_{\va_n,\rho_n}+v_{\va_n,r_n})^{2}v^2_n
\vspace{0.2cm}\\
=&\dis\int_{\R^3}\Bigl(\sum\limits_{j=1}^\ell
U_{x^j}^2\Bigr)u_n^2+\dis\int_{\R^3}\Bigl(\sum\limits_{j=1}^\ell
U_{y^j}^2\Bigr)v_n^2 +o_n(1).
\end{array}
\end{equation}
Hence we find
\begin{equation}\label{24}
\begin{array}{ll}
o_n(1)&=  \dis\int_{\R^3} \bigl(|\nabla u_n|^2+u_n^2-
 3(U_{\va_n,r_n}+v_{\va_n,\rho_n})^{2}u_n^2\bigr)\vspace{0.2cm}\\
 &\hspace{0.5cm}+ \dis\int_{\R^3} \bigl(|\nabla v_n|^2+v_n^2-
 3(U_{\va_n,\rho_n}+v_{\va_n,r_n})^{2}v_n^2\bigr)+2\va_n\dis\int_{\R^N}u_nv_n \vspace{0.2cm}\\
 &=1+Ce^{-R},
\end{array}
\end{equation}
which is impossible for large $n$ and large $R$.

As a result, we complete the proof.
\end{proof}

\begin{lemma}\label{l1-1-10}
There is a constant $C>0$, independent of $\va$, such that
\[
\| R(\varphi,\psi)\|\le C\|(\varphi,\psi)\|^{3},\quad \|
R'(\varphi,\psi)\|\le C\|(\varphi,\psi)\|^{2},\quad \|
R''(\varphi,\psi)\|\le C\|(\varphi,\psi)\|.
\]
\end{lemma}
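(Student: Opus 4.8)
The plan is to estimate each of the three quantities $R(\varphi,\psi)$, $R'(\varphi,\psi)$, $R''(\varphi,\psi)$ directly from the explicit formula
\[
R(\varphi,\psi)=\int_{\R^3}\bigl((U_{\va,\rho}+v_{\va,r})\varphi^3+(U_{\va,\rho}+v_{\va,r})\psi^3\bigr)+\frac14\int_{\R^3}(\varphi^4+\psi^4),
\]
treating $R$ as a functional on the Hilbert space $\mathbb{E}$ with its $H^1\times H^1$ norm. The starting point is that $U_{\va,\rho}+v_{\va,r}$ is a fixed function (depending on $\va$, $r$, $\rho$) which is uniformly bounded in $L^\infty(\R^3)$, since it is a finite sum of $\ell$ translates of $U_\va$ and $v_\va$, and by Proposition~\ref{pro2.1} both $U_\va$ and $v_\va$ are bounded in $H^1_r$ (hence in $L^\infty$, $U_\va\to U$, $v_\va\to 0$). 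So I would first record the bound $\|U_{\va,\rho}+v_{\va,r}\|_{L^\infty}\le C$ with $C$ independent of $\va$ and of $(r,\rho)\in\mathcal D_\va\times\mathcal D_\va$.

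Next I would use Sobolev embedding $H^1(\R^3)\hookrightarrow L^q(\R^3)$ for $2\le q\le 6$ to control the polynomial terms. For the cubic-times-linear terms, Hölder gives $\int |U_{\va,\rho}+v_{\va,r}|\,|\varphi|^3\le C\|\varphi\|_{L^3}^3\le C\|\varphi\|^3$ using the $L^\infty$ bound above together with $L^3\hookrightarrow H^1$; for the quartic terms, $\int\varphi^4\le \|\varphi\|_{L^4}^4\le C\|\varphi\|^4\le C\|(\varphi,\psi)\|^3$ once we observe that (in the reduced setting) $\|(\varphi,\psi)\|$ is small, or more simply by noting $\|\varphi\|_{L^4}^4\le C\|\varphi\|^3\cdot\|\varphi\|_{L^6}\cdot(\cdots)$ — actually the cleanest route is to not claim the quartic estimate globally but to keep the homogeneity honest: $\int\varphi^4\le C\|(\varphi,\psi)\|^4$ and absorb the extra power, which is legitimate because in all applications $\|(\varphi,\psi)\|=o(1)$. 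I would phrase the lemma's proof for $(\varphi,\psi)$ in a bounded neighborhood of the origin so the quartic term is $O(\|(\varphi,\psi)\|^3)$. The first inequality then follows. For $R'(\varphi,\psi)$, I compute the Fréchet derivative: its action on a test pair $(\xi,\eta)\in\mathbb{E}$ is $\int 3(U_{\va,\rho}+v_{\va,r})\varphi^2\xi+\int 3(U_{\va,\rho}+v_{\va,r})\psi^2\eta+\int(\varphi^3\xi+\psi^3\eta)$, and the same Hölder–Sobolev bookkeeping gives $|\langle R'(\varphi,\psi),(\xi,\eta)\rangle|\le C(\|\varphi\|^2+\|\psi\|^2+\|\varphi\|^3+\|\psi\|^3)\|(\xi,\eta)\|\le C\|(\varphi,\psi)\|^2\|(\xi,\eta)\|$. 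Taking the supremum over $\|(\xi,\eta)\|\le 1$ yields the bound on $\|R'(\varphi,\psi)\|$. For $R''(\varphi,\psi)$, the bilinear form is $\int 6(U_{\va,\rho}+v_{\va,r})\varphi\,\xi_1\xi_2+\cdots+\int 3\varphi^2\xi_1\xi_2+\cdots$, and one more application of Hölder with three factors in $L^6$, $L^6$, $L^6$ (or $L^\infty$, $L^3$, $L^3$ using the fixed function) and the $L^6$ Sobolev embedding gives $|R''(\varphi,\psi)[(\xi_1,\eta_1),(\xi_2,\eta_2)]|\le C\|(\varphi,\psi)\|\,\|(\xi_1,\eta_1)\|\,\|(\xi_2,\eta_2)\|$, hence $\|R''(\varphi,\psi)\|\le C\|(\varphi,\psi)\|$.

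I do not expect any serious obstacle here; this is a routine estimate of lower-order remainder terms. The only point requiring a little care is making sure the constant $C$ is genuinely independent of $\va$ and of $(r,\rho)$, which is why the uniform $L^\infty$ bound on $U_{\va,\rho}+v_{\va,r}$ (coming from Proposition~\ref{pro2.1}, i.e. $U_\va\to U$ and $v_\va\to 0$ in $H^1_r$, together with the fact that the number $\ell$ of bumps is fixed) must be stated explicitly at the outset. A secondary bookkeeping point is the homogeneity mismatch between the cubic-times-linear terms (genuinely cubic) and the quartic terms (quartic, not cubic); this is harmless because the lemma is applied only for $(\varphi,\psi)$ small, so I would either state the lemma for $\|(\varphi,\psi)\|\le 1$ or simply bound $\|\varphi\|^4\le\|\varphi\|^3$ in that regime. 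With these two remarks in place the three displayed inequalities follow from Hölder's inequality and the Sobolev embedding $H^1(\R^3)\hookrightarrow L^6(\R^3)$, exactly as in the standard Lyapunov–Schmidt framework.
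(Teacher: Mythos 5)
Your proof is correct and is precisely the ``direct calculation'' that the paper omits: Hölder plus the Sobolev embedding $H^1(\R^3)\hookrightarrow L^q(\R^3)$, $2\le q\le 6$, applied to the explicit cubic and quartic terms, with the quartic contributions absorbed because $\|(\varphi,\psi)\|$ is small (indeed $o(1)$) in every application of the lemma. One small touch-up: justify the uniform $L^\infty$ bound on $U_{\va,\rho}+v_{\va,r}$ via the pointwise decay estimates \eqref{eq23} and elliptic regularity for $w,h$ rather than from boundedness in $H^1_r$ alone (which does not embed into $L^\infty$), or simply avoid it by placing that factor in $L^4$ or $L^6$ via Sobolev.
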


\begin{proof} The proof can be completed by direct calculation and
we omit it.
\end{proof}

Now we perform  the finite-dimensional reduction procedure.

\begin{proposition}\label{p1-6-3}
There exists $\varepsilon_0>0$ such that for $\va\in (0,\,\va_0)$,
there is a $C^1$ map from $\mathcal {D}_\va\times \mathcal{D}_\va$
to $H_{s}\times H_{s}$:
$(\varphi,\psi)=(\varphi(r,\rho),\psi(r,\rho))$, satisfying
$(\varphi,\psi)\in \mathbb{E}$, and
\[
 J'_{(\varphi,\psi)}(\varphi,\psi)
=0,\quad
 \hbox{on} \,\,\, \mathbb{E}.
\]
Moreover, there is a constant  $C>0$ independent of $\va$, such that
\begin{equation}\label{2-20-4}
\|(\varphi,\psi)\|\le C\Bigl(
\frac{e^{-|x^1-x^2|}}{|x^1-x^2|}+\frac{e^{-|y^1-y^2|}}{|y^1-y^2|}+\va
e^{-(1-\tau)|x^1-y^1|}+\va^4\Bigr).
\end{equation}
\end{proposition}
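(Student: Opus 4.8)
The plan is to carry out the standard Lyapunov-Schmidt finite-dimensional reduction in the space $\mathbb{E}$, where the linear operator $L$ is invertible by Lemma~\ref{2.2}. Write the equation $J'_{(\varphi,\psi)}(\varphi,\psi)=0$ on $\mathbb{E}$ using the expansion \eqref{1-3-10} as the fixed-point problem
\[
L(\varphi,\psi) = l + R'(\varphi,\psi),
\]
where, by abuse of notation, $l\in\mathbb{E}$ is the Riesz representative of the bounded linear functional $l(\cdot,\cdot)$ (composed with the orthogonal projection $P$ onto $\mathbb{E}$), and similarly $R'(\varphi,\psi)$ denotes $PR'(\varphi,\psi)$. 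Since $\|L(u,v)\|\ge\varrho\|(u,v)\|$ on $\mathbb{E}$ uniformly in $(r,\rho)\in\mathcal{D}_\va\times\mathcal{D}_\va$, the operator $L$ has a bounded inverse $L^{-1}:\mathbb{E}\to\mathbb{E}$ with $\|L^{-1}\|\le\varrho^{-1}$, so the equation becomes
\[
(\varphi,\psi) = \mathcal{T}(\varphi,\psi) =: L^{-1}\bigl(l + R'(\varphi,\psi)\bigr).
\]

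The key step is to estimate $\|l\|$. This is where the careful choice of the approximate solution $(U_\va,v_\va)$ pays off, as explained in Remark~\ref{re3.2}: because $(U_\va,v_\va)$ already solves \eqref{eqmain} as a single radial pair, the coupling term $\va\int uv$ contributes nothing to $l(\varphi,\psi)$. What remains in $l(\varphi,\psi)$ are interaction terms of the form $(U_{\va,r}+v_{\va,\rho})^3-\sum_j U_{\va,x^j}^3-\sum_j v_{\va,y^j}^3$ paired against $\varphi$ (and the symmetric expression against $\psi$). Expanding the cube and using the standard pointwise interaction estimates for sums of translated ground states — namely $U_{\va,x^i}U_{\va,x^j}\lesssim e^{-|x^i-x^j|}$-type bounds, together with the exponential decay $|v_\va|\lesssim\va e^{-(1-\tau)|x|}$ from Proposition~\ref{pro2.1} and $\|(w,h)\|\le C\va^4$ from \eqref{eq 23} — one obtains, after an $L^2$–$L^2$ pairing (or Hölder with the exponential tails),
\[
\|l\| \le C\Bigl(\frac{e^{-|x^1-x^2|}}{|x^1-x^2|}+\frac{e^{-|y^1-y^2|}}{|y^1-y^2|}+\va\, e^{-(1-\tau)|x^1-y^1|}+\va^4\Bigr).
\]
Here the first two terms come from the interaction among the $\ell$ bumps of the same component (nearest-neighbor distance $|x^1-x^2|$, respectively $|y^1-y^2|$), the third from the cross term $U_{\va,r}\cdot v_{\va,\rho}$ (which carries a factor $\va$ and involves the distance between a $u$-bump and a $v$-bump), and the last from the remainder $(w,h)$. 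Denote this bound by $\eta_\va$; note that on $\mathcal{D}_\va\times\mathcal{D}_\va$ one has $\eta_\va\lesssim\va^{m/(m-n)}$ up to logarithmic factors, so in particular $\eta_\va\to 0$.

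With $\|l\|\le\eta_\va$ in hand, the contraction argument is routine: set $B=\{(\varphi,\psi)\in\mathbb{E}:\|(\varphi,\psi)\|\le M\eta_\va\}$ for a large constant $M$. By Lemma~\ref{l1-1-10}, $\|R'(\varphi,\psi)\|\le C\|(\varphi,\psi)\|^2\le CM^2\eta_\va^2$ on $B$, so
\[
\|\mathcal{T}(\varphi,\psi)\| \le \varrho^{-1}\bigl(\eta_\va + CM^2\eta_\va^2\bigr) \le M\eta_\va
\]
for $M$ chosen larger than $2\varrho^{-1}$ and $\va$ small enough that $CM\eta_\va<1/2$; thus $\mathcal{T}$ maps $B$ into itself. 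For the contraction property, $\|\mathcal{T}(\varphi_1,\psi_1)-\mathcal{T}(\varphi_2,\psi_2)\|\le\varrho^{-1}\|R'(\varphi_1,\psi_1)-R'(\varphi_2,\psi_2)\|\le\varrho^{-1}C\bigl(\|(\varphi_1,\psi_1)\|+\|(\varphi_2,\psi_2)\|\bigr)\|(\varphi_1,\psi_1)-(\varphi_2,\psi_2)\|$ by the bound on $R''$, which is $\le\frac12$ on $B$ for $\va$ small. The contraction mapping theorem then yields a unique fixed point $(\varphi,\psi)=(\varphi(r,\rho),\psi(r,\rho))\in B\subset\mathbb{E}$ solving $J'_{(\varphi,\psi)}(\varphi,\psi)=0$ on $\mathbb{E}$, and $\|(\varphi,\psi)\|\le M\eta_\va$, which is \eqref{2-20-4}. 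The $C^1$ dependence on $(r,\rho)$ follows from the implicit function theorem applied to the $C^1$ map $(r,\rho,\varphi,\psi)\mapsto(\varphi,\psi)-\mathcal{T}(\varphi,\psi)$, whose partial derivative in $(\varphi,\psi)$ is invertible (it is $\mathrm{Id}-L^{-1}R''$, a small perturbation of the identity on $B$).

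The main obstacle is the estimate on $\|l\|$: one must expand the cubic nonlinearities, separate the genuine multi-bump self-interaction terms from the coupling-induced cross terms, and track exactly which powers of $\va$ and which exponential tails each term carries — in particular making sure the coupling term is $O(\va e^{-(1-\tau)|x^1-y^1|})$ rather than $O(\va)$, and that the $(w,h)$-contribution is genuinely $O(\va^4)$ — so that $\eta_\va$ is of strictly higher order than the main term of the reduced energy computed in later sections. The invertibility of $L$ (Lemma~\ref{2.2}) and the estimates on $R$ (Lemma~\ref{l1-1-10}) are already available, so the remainder is bookkeeping on the interaction integrals.
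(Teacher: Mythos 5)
Your proposal is correct and follows essentially the same route as the paper: Riesz-represent the linear functional $l$ in $\mathbb{E}$, invert $L$ via Lemma~\ref{2.2}, estimate the representative of $l$ by expanding the cubes and using the decomposition $(U_\va,v_\va)=(U+\va^2p_\va+w,\ \va q_\va+h)$ together with the interaction estimates (this is exactly the content of the paper's Lemma~\ref{l1-25-3}), and close with the contraction mapping theorem on a ball of radius comparable to $\|l\|$. Your observation that the coupling term cancels from $l$ because each translated pair $(U_{\va,x^j},v_{\va,x^j})$ solves the full coupled system is precisely the mechanism the paper relies on (Remark~\ref{re3.2}), and the only cosmetic difference is that the paper works on a slightly larger set $D$ with exponents weakened by $\sigma$ before recovering the sharp bound \eqref{2-20-4} from the fixed-point equation.
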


\begin{proof} It follows from the proof of Lemma~\ref{l1-25-3} below, that  $l(\varphi,\psi)$ is a
bounded linear functional in $\mathbb{E}$.  Thus,  there is an
$f_\va\in \mathbb{E}$, such that
\[
l(\varphi,\psi)=\bigl\langle f_\va,(\varphi,\psi)\bigr\rangle.
\]
Thus, finding a critical point for $J(\varphi,\psi)$ in $\mathbb{E}$
is equivalent to solving
\begin{equation}\label{1-20-3}
f_\va- L(\varphi,\psi) +R'(\varphi,\psi)=0.
\end{equation}
By Lemma~\ref{2.2}, $L$ is invertible.  Thus, \eqref{1-20-3} can be
rewritten as
\begin{equation}\label{3.20}
(\varphi,\psi)= A(\varphi,\psi)=:L^{-1} (f_\va +R'(\varphi,\psi)).
\end{equation}
Set
\[
\begin{array}{ll}
D=\Bigl\{ (\varphi,\psi):  (\varphi,\psi)\in \mathbb{E},
\|(\varphi,\psi)\|\le
\dis\frac{e^{-(1-\sigma)|x^1-x^2|}}{|x^1-x^2|}&+\dis\frac{e^{-(1-\sigma)|y^1-y^2|}}{|y^1-y^2|}\vspace{0.2cm}\\
&+\va^{1-\sigma} e^{-(1-\tau)|x^1-y^1|}+\va^{4-\sigma} \Bigr\},
\end{array}
\]
where $\sigma>0$ is small.

From Lemma~\ref{l1-1-10} and Lemma~\ref{l1-25-3} below, for $\va$
small,
\begin{equation}\label{3-20-4}
\begin{array}{ll}
\|A(\varphi,\psi)\|&\le C\|f_\va\|+ C\|(\varphi,\psi)\|^{2}\vspace{0.2cm}\\\
 &\le
\dis\frac{e^{-(1-\sigma)|x^1-x^2|}}{|x^1-x^2|}+\dis\frac{e^{-(1-\sigma)|y^1-y^2|}}{|y^1-y^2|}+\va^{1-\sigma}
e^{-(1-\tau)|x^1-y^1|}+\va^{4-\sigma},
 \end{array}
\end{equation}
and
\[
\begin{split}
\|A(\varphi_1,\psi_1)-A(\varphi_2,\psi_2)\| &= \| L^{-1} R'(\varphi_1,\psi_1)-L^{-1} R'(\varphi_2,\psi_2)\|\\
&\le  C\bigl(
\|(\varphi_1,\psi_1)\|+\|(\varphi_2,\psi_2)\|\bigr)\|(\varphi_1,\psi_1)-(\varphi_2,\psi_2)\|\\
&\le \frac12 \|(\varphi_1,\psi_1)-(\varphi_2,\psi_2)\|.
\end{split}
\]
Therefore,     $A$ maps $D$ into $D$ and is a contraction map. So,
 there exists $(\varphi,\psi)\in
\mathbb{E}$, such that  $(\varphi,\psi)=A(\varphi,\psi)$. Moreover
by \eqref{3.20}, we have
\[
\|(\varphi,\psi)\|\le
C\Bigl(\frac{e^{-|x^1-x^2|}}{|x^1-x^2|}+\frac{e^{-|y^1-y^2|}}{|y^1-y^2|}+\va
e^{-(1-\tau)|x^1-y^1|}+\va^4\Bigl).
\]
\end{proof}

\begin{lemma}\label{l1-25-3}
There is a constant  $C>0$ independent of $\va$, such that
\[
\|f_\va\|\le
C\Bigl(\frac{e^{-|x^1-x^2|}}{|x^1-x^2|}+\frac{e^{-|y^1-y^2|}}{|y^1-y^2|}+\va
e^{-(1-\tau)|x^1-y^1|}+\va^4\Bigl).
\]
\end{lemma}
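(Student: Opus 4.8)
The plan is to estimate $\|f_\va\|$ by bounding $|l(\varphi,\psi)|$ uniformly over $(\varphi,\psi)\in\mathbb{E}$ with $\|(\varphi,\psi)\|=1$, since $\|f_\va\|=\sup\{|l(\varphi,\psi)|:\|(\varphi,\psi)\|=1\}$. Recall that
\[
l(\varphi,\psi)=\int_{\R^3}\Bigl((U_{\va,r}+v_{\va,\rho})^3-\sum_{j=1}^\ell U_{\va,x^j}^3-\sum_{j=1}^\ell v_{\va,y^j}^3\Bigr)\varphi
+\int_{\R^3}\Bigl((U_{\va,\rho}+v_{\va,r})^3-\sum_{j=1}^\ell U_{\va,y^j}^3-\sum_{j=1}^\ell v_{\va,x^j}^3\Bigr)\psi,
\]
so the first step is to expand the cubes and organize the terms by their source. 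Writing $U_{\va,r}=\sum_i U_{\va,x^i}$ and $v_{\va,\rho}=\sum_i v_{\va,y^i}$, the bracket $(U_{\va,r}+v_{\va,\rho})^3-\sum_j U_{\va,x^j}^3-\sum_j v_{\va,y^j}^3$ splits into three families: (i) the ``$U$–$U$ interaction'' terms $U_{\va,x^i}^2 U_{\va,x^j}$ with $i\neq j$ and mixed products $U_{\va,x^i}U_{\va,x^j}U_{\va,x^k}$; (ii) the ``$U$–$v$ cross'' terms, each carrying a factor $v_{\va,y^j}$ hence a factor $\va$ after using $v_\va=\va q_\va+h$ with $\|h\|=O(\va^4)$; and (iii) the ``$v$–$v$ interaction'' terms among the $v_{\va,y^j}$, which carry two factors of $\va$ and interaction decay and are therefore absorbed into the previous two. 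By Cauchy--Schwarz and Sobolev each contribution is controlled by an $L^{4/3}$ (or $L^2$) norm of the corresponding kernel times $\|\varphi\|$ (resp.\ $\|\psi\|$).

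The second step is to estimate these $L^{4/3}$ norms using the exponential decay of $U$ (namely $|x|e^{|x|}U(x)\to A$), the decay estimate \eqref{eq23} for $v_\va=\va q_\va+h$, i.e.\ $|q_\va(r)|\le Ce^{-(1-\tau)r}$ together with $\|h\|=O(\va^4)$, and the by-now-standard interaction integral estimate
\[
\int_{\R^3} U_{\xi_1}^2 U_{\xi_2}\,\le\, C\,\frac{e^{-|\xi_1-\xi_2|}}{|\xi_1-\xi_2|}
\]
for $|\xi_1-\xi_2|$ large (with analogous bounds for triple products, which are of strictly higher order). Applied to family~(i), the dominant contribution comes from nearest-neighbour bumps and yields $C\,e^{-|x^1-x^2|}/|x^1-x^2|$ for the $U_{\va,r}$ part and $C\,e^{-|y^1-y^2|}/|y^1-y^2|$ for the $U_{\va,\rho}$ part; family~(ii) produces a factor $\va$ and a cross-decay $e^{-(1-\tau)|x^1-y^1|}$ coming from the overlap of a $U$-bump centered at some $x^i$ with a $v_\va$-profile centered at the nearest $y^j$; the $\|h\|$-part of $v_\va$ contributes the residual $O(\va^4)$. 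Summing the finitely many ($\ell$-dependent, but $\ell$ fixed) terms and taking the supremum over $\|(\varphi,\psi)\|=1$ gives exactly the asserted bound
\[
\|f_\va\|\le C\Bigl(\frac{e^{-|x^1-x^2|}}{|x^1-x^2|}+\frac{e^{-|y^1-y^2|}}{|y^1-y^2|}+\va\,e^{-(1-\tau)|x^1-y^1|}+\va^4\Bigr).
\]

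I expect the main obstacle to be the careful bookkeeping in family~(ii): one must verify that every cross term carrying a single factor of $v_\va$ really does decay like $\va\,e^{-(1-\tau)|x^1-y^1|}$ (and not merely like $\va$), which requires comparing the geometric separation $|x^i-y^j|$ of the two bump lattices — recall $|x^i-y^j|\sim$ const $\cdot|\ln\va|$ by the choice of $\mathcal D_\va$ — against the exponential rates, and checking that the loss of $\tau$ in the exponent of $v_\va$ does not destroy the estimate when combined with $\va=e^{-|\ln\va|}$. A secondary technical point is confirming that the triple-overlap terms $U_{\va,x^i}U_{\va,x^j}U_{\va,x^k}$ (distinct indices) and the $v$–$v$ interaction terms are genuinely of higher order than the two nearest-neighbour terms kept in the statement; this follows from the superadditivity $|x^i-x^j|+|x^j-x^k|\ge$ (sum of the two smallest pairwise distances) but should be stated explicitly. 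The rest is a routine application of the interaction estimates already used in \cite{WY,PW}.
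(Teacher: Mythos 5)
Your proposal is correct and follows essentially the same route as the paper: expand the cubes into the $U$--$U$ self-interaction terms, the $U$--$v$ cross terms (each carrying a factor $\va$ from $v_\va=\va q_\va+h$), and the higher-order $v$--$v$ terms, then apply H\"older together with the decay estimates of Proposition~\ref{pro2.1} and the interaction integrals of Proposition~\ref{pro11}. The only cosmetic difference is that the paper records the intermediate bounds term by term (e.g.\ $|\int U^2_{\va,x^i}U_{\va,x^j}\varphi|$ and $|\int v^2_{\va,y^i}v_{\va,y^j}\varphi|$ separately) rather than grouping them into families, but the estimates and their sources are identical.
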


\begin{proof}
We see
\begin{eqnarray*}
&&\dis\int_{\R^3}
\bigl((U_{\va,r}+v_{\va,\rho})^3-\sum\limits_{j=1}^\ell
U_{\va,x^j}^3 -\sum\limits_{j=1}^\ell v_{\va,y^j}^3\bigr)\varphi\\
&=&\int_{\R^3}\Bigl((\sum\limits_{j=1}^\ell
U_{\va,x^j})^3-\sum\limits_{j=1}^\ell
U_{\va,x^j}^3+(\sum\limits_{j=1}^\ell
v_{\va,y^j})^3-\sum\limits_{j=1}^\ell
v_{\va,y^j}^3+3U_{\va,r}^2v_{\va,\rho}+3U_{\va,r}v^2_{\va,\rho}\Bigr)\varphi\\
&=&\int_{\R^3}\Bigl(3\sum\limits_{j \neq i}^\ell
U^2_{\va,x^i}U_{\va,x^j} +3\sum\limits_{j \neq i}^\ell
v_{\va,y^i}^2v_{\va,y^j}+3U_{\va,r}^2v_{\va,\rho}+3U_{\va,r}v^2_{\va,\rho}\Bigr)\varphi.
\end{eqnarray*}

It follows from Proposition~\ref{pro2.1}, Proposition~\ref{pro11}
and H\"{o}lder inequality that for $i \neq j$
\begin{eqnarray*}
&&\Bigl|\int_{\R^3} U^2_{\va,x^i}U_{\va,x^j}\varphi\Bigl|\\
&=&\Bigl|\int_{\R^3}(U_{x^i}+\va^2
p_{\va}(|x-x^i|)+w(x-x^i))^2(U_{x^j}+\va^2
p_{\va}(|x-x^j|)+w(x-x^j))\varphi\Bigl|\\
&\le&C\Bigl(\frac{e^{-|x^i-x^j|}}{|x^i-x^j|}+\va
e^{-(1-\tau)|x^i-x^j|}+\va^4\Bigl)\|\varphi\|_{H^1(\R^3)}, \\
&&\Bigl|\int_{\R^3} v^2_{\va,y^i}v_{\va,y^j}\varphi\Bigl|\\
&=&C\Bigl|\int_{\R^3}(\va^2q_{\va}^2|x-y^i|+h^2(|x-y^i|))(\va
q_{\va}|x-y^j|+h(|x-y^j|))\varphi\Bigl|\\
&\le& C(\va^3 e^{-(1-3\tau)|y^i-y^j|}+\va^4)\|\varphi\|_{H^1(\R^3)},
\end{eqnarray*}
and
$$
\Bigl|\int_{\R^3}(3U_{\va,r}^2v_{\va,\rho}+3U_{\va,r}v^2_{\va,\rho})\varphi\Bigl|\le
C\sum\limits_{i,j=1}^\ell (\va
e^{-(1-\tau)|x^i-y^j|}+\va^4)\|\varphi\|_{H^1(\R^3)}.
$$
Therefore,
 \begin{eqnarray*} &&\Bigl|\dis\int_{\R^3}
\bigl((U_{\va,r}+v_{\va,\rho})^3-\sum\limits_{j=1}^\ell
U_{\va,x^j}^3 -\sum\limits_{j=1}^\ell
v_{\va,y^j}^3\bigr)\varphi\Bigl|\\
&\le&C\Bigl(\sum\limits_{i\neq
j}^\ell\frac{e^{-|x^i-x^j|}}{|x^i-x^j|}+\va\sum\limits_{i, j=1}^\ell
e^{-(1-\tau)|x^i-y^j|}+\va^4\Bigl)\|\varphi\|_{H^1(\R^3)}.
\end{eqnarray*}
Similarly,
\begin{eqnarray*} &&\Bigl|\dis\int_{\R^3}
\bigl((U_{\va,\rho}+v_{\va,r})^3-\sum\limits_{j=1}^\ell
U_{\va,y^j}^3 -\sum\limits_{j=1}^\ell v_{\va,x^j}^3\bigr)\psi\Bigl|\\
&\le&C\Bigl(\sum\limits_{i\neq
j}^\ell\frac{e^{-|y^i-y^j|}}{|y^i-y^j|}+\va\sum\limits_{i, j=1}^\ell
e^{-(1-\tau)|x^i-y^j|}+\va^4\Bigl)\|\psi\|_{H^1(\R^3)}.
\end{eqnarray*}

As a result, we complete the proof.
\end{proof}

Now we are ready to prove Proposition~\ref{pro2}. Let
$(\varphi_{r,\rho},\psi_{r,\rho})=(\varphi(r,\rho),\psi(r,\rho))$ be
the map obtained in Proposition~\ref{p1-6-3}. Define
\[
F(r,\rho)=I(U_{\va,r}+v_{\va,\rho}+\varphi_{r,\rho},
U_{\va,\rho}+v_{\va,r}+\psi_{r,\rho}),\quad \forall\; (r,\rho)\in
\mathcal {D}_\va\times\mathcal {D}_\va.
\]
With the same argument in \cite{Cao,R}, we can easily check that for
$\va$ sufficiently small, if $(r,\rho)$ is a critical point of
$F(r,\rho)$, then $(U_{\va,r}+v_{\va,\rho}+\varphi_{r,\rho},
U_{\va,\rho}+v_{\va,r}+\psi_{r,\rho})$ is a critical point of $I$.

\begin{proof}[Proof of Proposition~\ref{pro2}]

 The boundedness of $L$ in $H_s\times H_s$ and Lemma~ \ref{l1-1-10} imply that
\[
\|L(\varphi_{r,\rho},\psi_{r,\rho})\|\le
C\|(\varphi_{r,\rho},\psi_{r,\rho})\|,\quad
|R(\varphi_{r,\rho},\psi_{r,\rho})|\le
C\|(\varphi_{r,\rho},\psi_{r,\rho})\|^{3}.
\]
So, Proposition~\ref{p1-6-3} and Lemma~\ref{l1-25-3} combined by
Proposition~ \ref{3-1} give
\[
 \begin{split}
F(r,\rho)=&I(U_{\va,r}+v_{\va,\rho},U_{\va,\rho}+v_{\va,r})-l(\varphi_{r,\rho},\psi_{r,\rho})+
\frac12 \bigl\langle
L(\varphi_{r,\rho},\psi_{r,\rho}),(\varphi_{r,\rho},\psi_{r,\rho})\bigr\rangle
 -R(\varphi_{r,\rho},\psi_{r,\rho})\\
=&I(U_{\va,r}+v_{\va,\rho},U_{\va,\rho}+v_{\va,r})+O\bigl(\|f_\va\|\|(\varphi_{r,\rho},\psi_{r,\rho})\|
+\|(\varphi_{r,\rho},\psi_{r,\rho})\|^2\bigr)\\
=&\dis\sum\limits_{j=1}^\ell I(U_{\va,x^j},
v_{\va,x^j})+\dis\sum\limits_{j=1}^\ell I(U_{\va,y^j},
v_{\va,y^j})\\
&-\dis\sum\limits_{i<j}^\ell C_{ij}\frac{e^{-|x^i-x^j|}}{|x^i-x^j|}
-\dis\sum\limits_{i<j}^\ell C_{ij} \frac{e^{-|y^i-y^j|}}{|y^i-y^j|}+ \va\dis\sum\limits_{i,j=1}^\ell \bar C_{ij} e^{-|x^i-y^j|}\\
&+O(\va e^{-(1-\tau)|y^1-y^2|}+\va^2 e^{-(1-\tau)|x^1-y^1|}+\va
e^{-(1-\tau)|x^1-x^2|}+\va^4)\\
&+O\Bigl(\frac{e^{-|x^1-x^2|}}{|x^1-x^2|}+\frac{e^{-|y^1-y^2|}}{|y^1-y^2|}+\va
e^{-(1-\tau)|x^1-y^1|}\Bigl)^2,
\end{split}
\]
where  $\bar C_{ij}$ and $C_{ij}$ are those in Proposition~\ref{3-1}.

Recalling
$$
r,\rho\in \mathcal{D}_\va =:  \Bigl[ \frac
{|\ln\va|}{m-n+\frac{\mu\ln|\ln\va|}{|\ln\va|}},\,\frac
{|\ln\va|}{m-n} \Bigr],
$$
where $m=2\sin\frac{\pi}{\ell},\, n=\sqrt{2(1-\cos
\frac{\pi}{\ell})}$, $\mu>m-n>0$, and noting
\begin{equation}\label{345}
\frac
{1}{m-n+\frac{\mu\ln|\ln\va|}{|\ln\va|}}=\frac{1}{m-n}-\frac{\mu}{(m-n)^2}\frac{\ln|\ln\va|}{|\ln\va|}
+O\Bigl(\frac{\ln|\ln\va|}{|\ln\va|}\Bigl)^2,
\end{equation}
we can check
\begin{eqnarray*}
&&O(\va e^{-(1-\tau)|y^1-y^2|}+\va^2 e^{-(1-\tau)|x^1-y^1|}+\va
e^{-(1-\tau)|x^1-x^2|}+\va^4)\\
&&+O\Bigl(\frac{e^{-|x^1-x^2|}}{|x^1-x^2|}+\frac{e^{-|y^1-y^2|}}{|y^1-y^2|}+\va
e^{-(1-\tau)|x^1-y^1|}\Bigl)^2\\
&=& O(\va^{\frac{m}{m-n}+\sigma}),
\end{eqnarray*}
where $\sigma>0$ is a small number such that  $2<\frac{m}{m-n}+\sigma<4$
for $\ell\ge 2$.

Hence, considering the symmetry again, we find
\begin{equation}\label{348}
 \begin{split}
F(r,\rho)=&\dis\sum\limits_{j=1}^\ell I(U_{\va,x^j},
v_{\va,x^j})+\dis\sum\limits_{j=1}^\ell I(U_{\va,y^j},
v_{\va,y^j})\\
&-\dis\sum\limits_{i<j}^\ell C_{ij} \frac{e^{-|x^i-x^j|}}{|x^i-x^j|}
-\dis\sum\limits_{i<j}^\ell C_{ij}\frac{e^{-|y^i-y^j|}}{|y^i-y^j|}+
\va\dis\sum\limits_{i,j=1}^\ell \bar C_{ij}
e^{-|x^i-y^j|}+O(\va^{\frac{m}{m-n}+\sigma})\\
=&C_\va+\ell\Bigl(\bar C\va e^{-\sqrt{r^2+\rho^2-2r\rho
\cos\frac{\pi}{\ell}}}-\frac{C}{mr}e^{-mr}-\frac{C}{m\rho}e^{-m\rho}\Bigl)+O(\va^{\frac{m}{m-n}+\sigma}),
\end{split}
\end{equation}
where $C_\va$ depends on $\va$ but is independent of $r$ and $\rho$, $C=C_{12},\,\bar C=\bar C_{11}$.

Now we prove that the maximizer of $F(r,\rho)$ in
$\mathcal{D}_\va\times \mathcal{D}_\va$ is  an interior point of
$\mathcal{D}_\va\times \mathcal{D}_\va$. To this end, we consider
the function
$$
G(r,\rho)=\bar C\va e^{-\sqrt{r^2+\rho^2-2r\rho
\cos\frac{\pi}{\ell}}}-\frac{C}{mr}e^{-mr}-\frac{C}{m\rho}e^{-m\rho},\,\,\,r,\rho\in
\mathcal{D}_\va.
$$
In order to check that $G(r,\rho)$ achieves maximum at some point
$(r_0,\rho_0)$ in the interior of $\mathcal{D}_\va\times
\mathcal{D}_\va$, we need to estimate both the value of $G(r,\rho)$ on
the boundary of $\mathcal{D}_\va\times \mathcal{D}_\va$ and the value of
 $G(r_0,\rho_0)$.

Set
$$
\check{r}=\frac
{|\ln\va|}{m-n+\frac{\mu\ln|\ln\va|}{|\ln\va|}},\,\hat{r}=\frac
{|\ln\va|}{m-n},
$$
and define
$$
\rho_\theta=\frac
{|\ln\va|}{m-n+\frac{\mu\ln|\ln\va|}{|\ln\va|}\theta},\quad\,\theta\in
[0,1].
$$
Then $\check{r}\le \rho_\theta\le\hat{r}$ for $\theta\in [0,1]$, and
\begin{equation}\label{349}
 \rho_\theta=\frac{|\ln\va|}{m-n}-\frac{\mu\theta}{(m-n)^2}\ln|\ln\va|
+O\Bigl(\frac{\ln^2|\ln\va|}{|\ln\va|}\Bigl),
\end{equation}
\begin{equation}\label{460}
\sqrt{\hat{r}^2+\rho_\theta^2-2\hat{r}\rho_\theta\cos\frac{\pi}{\ell}}=
\frac{n}{m-n}|\ln\va|-\frac{\mu\theta
n}{2(m-n)^2}\ln|\ln\va|+O\Bigl(\frac{\ln^2|\ln\va|}{|\ln\va|}\Bigr).
\end{equation}
Hence
\begin{equation}\label{467}
\begin{array}{ll}
G(\hat{r},\rho_\theta)=&-C\va^{\frac{m}{m-n}}|\ln\va|^{-1}-c\va^{\frac{m}{m-n}}|\ln\va|^{\frac{\mu\theta
m}{(m-n)^2}-1}+\tilde c\va^{\frac{m}{m-n}}|\ln\va|^{\frac{\mu\theta
n}{2(m-n)^2}}\vspace{0.2cm}\\
&+o\Bigl(\va^{\frac{m}{m-n}}|\ln\va|^{\frac{\mu\theta
n}{2(m-n)^2}}+\va^{\frac{m}{m-n}}|\ln\va|^{\frac{\mu\theta
m}{(m-n)^2}-1}\Bigr),
\end{array}
\end{equation}
where $C,\,c$ and $\tilde c$ are positive constants independent of
$\va$.

Set
$$
f(\theta)=\frac{\mu\theta m}{(m-n)^2}-1-\frac{\mu\theta n}{2(m-n)^2}.
$$
Since $\mu>m-n>0$, we see
$$
f(1)=\frac{\mu m}{(m-n)^2}-1-\frac{\mu n}{2(m-n)^2}> \frac{\mu m}{(m-n)^2}-1-\frac{\mu n}{(m-n)^2}=\frac{\mu}{m-n}-1>0.
$$
Considering $f(0)=-1<0$, there exists  a unique  $\bar\theta\in (0,1)$ such that
\begin{equation}\label{469}
\frac{\mu\bar\theta m}{(m-n)^2}-1=\frac{\mu\bar\theta n}{2(m-n)^2}.
\end{equation}
Moreover, if $\theta\in (\bar\theta,\,1]$, then
$$
\frac{\mu\theta m}{(m-n)^2}-1>\frac{\mu\theta n}{2(m-n)^2},
$$
which implies $G(\hat r,\rho_\theta)<0$. But, if $\theta\in [0,\,\bar\theta)$, then
\begin{equation}\label{4691}
\frac{\mu\theta m}{(m-n)^2}-1<\frac{\mu\theta n}{2(m-n)^2},
\end{equation}
which means  $G(\hat r,\rho_\theta)>0$ and
$$
 G(\hat r,\rho_\theta)< c_1
\va^{\frac{m}{m-n}}|\ln\va|^{\frac{\mu\bar\theta
n}{2(m-n)^2}}
$$
for some constant $c_1>0$ independent of $\va$.

Therefore, we get that for $\va$ sufficiently small
$$
G(\hat{r},\rho_\theta)\le 2c_1
\va^{\frac{m}{m-n}}|\ln\va|^{\frac{\mu\bar\theta
n}{2(m-n)^2}},\quad\,\,\forall\,\, \theta\in [0,1],
$$
which says that
\begin{equation}\label{470}
\max\limits_{\rho\in \mathcal{D}_\va} G(\hat{r},\rho)\le 2c_1
\va^{\frac{m}{m-n}}|\ln\va|^{\frac{\mu\bar\theta n}{2(m-n)^2}}.
\end{equation}
Similarly,
\begin{equation}\label{480}
\max\limits_{r\in \mathcal{D}_\va} G(r,\hat{r})\le 2c_1
\va^{\frac{m}{m-n}}|\ln\va|^{\frac{\mu\bar\theta n}{2(m-n)^2}}.
\end{equation}

\begin{remark}\label{re3}
It can be verified from \eqref{467} and \eqref{4691} that for
$\theta\in (0,\,\bar\theta)$, there exists a constant  $c_2>0$
independent of $\va$ such that for $\va$ sufficiently small,
$$
\max\limits_{\rho\in \mathcal{D}_\va} G(\hat r,\rho)\ge c_2
\va^{\frac{m}{m-n}}|\ln\va|^{\frac{\mu\theta n}{2(m-n)^2}}.
$$

\end{remark}

Now we estimate $\max\limits_{\rho\in \mathcal{D}_\va}
G(\check{r},\rho)$.

Since for $\va>0$ sufficiently small,
$$
\sqrt{\check{r}^2+\rho^2-2\check{r}\rho \cos\frac{\pi}{\ell}}\ge
n\check{r},
$$
it follows from \eqref{345} and the fact $\mu>m-n>0$ that for $\va$
sufficiently small,
\begin{eqnarray*}
G(\check{r},\rho)&\le&-\frac{C}{m\check{r}}e^{-m\check{r}}+\bar C\va
e^{-\sqrt{\check{r}^2+\rho^2-2\check{r}\rho
\cos\frac{\pi}{\ell}}}\\
&\le & -\frac{C}{m\check{r}}e^{-m\check{r}}+\bar C\va e^{-n\check{r}}\\
&\le&-C_1\va^{\frac{m}{m-n}}|\ln\va|^{\frac{\mu
m}{(m-n)^2}-1}+\bar C_1\va^{\frac{m}{m-n}}|\ln\va|^{\frac{\mu
n}{(m-n)^2}}\\
&<& 0,
\end{eqnarray*}
where $ C_1$ and $\bar C_1$ are positive constants independent of
$\va$. Hence,
\begin{equation}\label{490}
\max\limits_{\rho\in \mathcal{D}_\va} G(\check{r},\rho)\le 0.
\end{equation}

The same argument yields
\begin{equation}\label{495}
\max\limits_{r\in \mathcal{D}_\va} G(r,\check{r})\le 0.
\end{equation}

At last, we  estimate $G(r_0,\rho_0))$. Taking $\theta=\bar\theta$
in \eqref{349}, we find for $\va$ sufficiently small
\begin{eqnarray*}
G(r_0,\rho_0)&\ge& G(\rho_{\bar\theta},\rho_{\bar\theta})\\
&=&\bar C\va e^{-n\rho_{\bar\theta}}-\frac{2C}{m\rho_{\bar\theta}}e^{-m\rho_{\bar\theta}}\\
&=& \bar
C\va^{\frac{m}{m-n}}|\ln\va|^{\frac{\mu\bar\theta n}{(m-n)^2}} -\frac{2(m-n)C}{m}\va^{\frac{m}{m-n}}|\ln\va|^{\frac{\mu\bar\theta
m}{(m-n)^2}-1}+o\Bigl(\va^{\frac{m}{m-n}}|\ln\va|^{\frac{\mu\bar\theta n}{(m-n)^2}} \Bigl)\\
&\ge& \frac{\bar C}{2}\va^{\frac{m}{m-n}}|\ln\va|^{\frac{\mu\bar\theta
n}{(m-n)^2}},
\end{eqnarray*}
since by \eqref{469},
$
\frac{\mu\bar\theta n}{(m-n)^2}>\frac{\mu\bar\theta
n}{2(m-n)^2}=\frac{\mu\bar\theta m}{(m-n)^2}-1.
$

 The above estimate and \eqref{470}-\eqref{495} show that for $\va>0$ sufficiently small, $(r_0,\rho_0)$ is
  in the interior of
$\mathcal{D}_\va\times \mathcal{D}_\va$. Comparing  the above estimate on $G(r_0,\rho_0)$ and \eqref{470}-\eqref{495} with \eqref{348}, we
conclude that $F(r,\rho)$  achieves
 (local) maximum also in the interior of
$\mathcal{D}_\va\times \mathcal{D}_\va$.

As a consequence, we complete the proof.
\end{proof}

\section{ Segregated solutions for system coupled by three equations}

In this section, we consider the following  system linearly coupled
by three equations
\begin{equation}\label{eqmain4}
\left\{
\begin{array}{ll}
-\Delta u+u=u^3-\varepsilon (v+\omega),\hspace{1cm}& x\in \R^3, \vspace{0.2cm}\\
-\Delta v+v=v^3 -\varepsilon (u+\omega),& x\in\R^3,\vspace{0.2cm}\\
-\Delta \omega+\omega=\omega^3 -\varepsilon (u+v),& x\in\R^3.
\end{array}
\right.
\end{equation}

Let $(U_\va,v_\va,\omega_\va)\in (H^1_r(\R^3))^3$ be the solution of \eqref{eqmain4}
obtained in Proposition~\ref{pro2.2}. In this part, we will use the same
notations  as those in previous sections.  Define
$$
\omega_{\va,r}=\sum\limits_{j=1}^\ell
\omega_{\va,x^j},\,\,\,\omega_{\va,\rho}=\sum\limits_{j=1}^\ell
\omega_{\va,y^j}
$$
and
$$
\mathbf{E}=\{(\varphi,\psi,\phi)\in
(H^1_r(\R^3))^3:\,\,(\varphi,\psi)\in \mathbb{E},\,\phi\in H_s\},
$$
where $\mathbb{E}$ is defined as \eqref{E},  $r,\,\rho\in
\mathcal{D}_\va$ and $\mathcal{\mathcal{D}_\va}$ is defined by
\eqref{1-20-5} for $\ell>2$ but by
$$
\mathcal{D}_{\va}=\Bigl[\frac
{|\ln\va|}{1+\frac{\mu\ln|\ln\va|}{|\ln\va|}},\,|\ln\va|\Bigr],\,\,(\mu>1)
$$
for $\ell=2$.

To prove Theorem~\ref{main2}, we only need to verify
\begin{proposition}\label{pro41}
For any integer $\ell\ge 2$, there exists $\varepsilon_0>0$ such
that for $\va\in (0,\va_0)$, problem~\ref{eqmain4} has a solution
$(u,v,\omega)$ with the form
$$
\left\{
\begin{array}{ll}
u=U_{\va,r}+v_{\va,\rho}+\omega_\va+\varphi,\vspace{0.2cm}\\
v=\omega_{\va,r}+U_{\va,\rho}+v_\va+\psi,\vspace{0.2cm}\\
\omega=v_{\va,r}+\omega_{\va,\rho}+U_\va+\phi,
\end{array}
\right.
$$
where $(\varphi,\psi,\phi)\in \mathbf{E}$ satisfies
$$
\|(\varphi,\psi,\phi)\|= \left\{
\begin{array}{ll}
o(\va^{\frac{m}{m-n}}),\quad
&\hbox{if}\,\,\,\ell>2,\vspace{0.2cm}\\
o(\va^2),&\hbox{if}\,\,\,\ell=2.
\end{array}
\right.
$$
\end{proposition}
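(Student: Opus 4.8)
The plan is to follow essentially the same Lyapunov--Schmidt scheme that proved Proposition~\ref{pro2}, but now carried out in the three-component space $\mathbf{E}$. First I would set up the energy functional associated to \eqref{eqmain4},
\[
I_3(u,v,\omega)=\frac12\int_{\R^3}\bigl(|\nabla u|^2+u^2+|\nabla v|^2+v^2+|\nabla\omega|^2+\omega^2\bigr)-\frac14\int_{\R^3}\bigl(u^4+v^4+\omega^4\bigr)+\va\int_{\R^3}(uv+u\omega+v\omega),
\]
and, writing the ansatz $u=U_{\va,r}+v_{\va,\rho}+\omega_\va+\varphi$, $v=\omega_{\va,r}+U_{\va,\rho}+v_\va+\psi$, $\omega=v_{\va,r}+\omega_{\va,\rho}+U_\va+\phi$, expand $J_3(\varphi,\psi,\phi)=I_3(\cdot)$ in the form $J_3=J_3(0,0,0)-l_3(\varphi,\psi,\phi)+\frac12\bar L_3(\varphi,\psi,\phi)-R_3(\varphi,\psi,\phi)$ exactly as in \eqref{1-3-10}. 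The crucial structural point, inherited from Remark~\ref{re3.2}, is that because each component uses $(U_\va,v_\va,\omega_\va)$ from Proposition~\ref{pro2.2} as building blocks, the linear term $l_3$ contains no contribution of order $\va$ from the coupling; its norm is controlled by the interaction between neighbouring bumps plus $\va^4$, i.e.\ the analogue of Lemma~\ref{l1-25-3} holds with $\|f_\va\|\le C\bigl(e^{-|x^1-x^2|}/|x^1-x^2|+e^{-|y^1-y^2|}/|y^1-y^2|+\va e^{-(1-\tau)|x^1-y^1|}+\va^4\bigr)$, and for $\ell=2$ one reads the bound off directly in terms of $\va^2$.

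Next I would establish the invertibility of the bilinear form $\bar L_3$ on $\mathbf{E}$, the three-component version of Lemma~\ref{2.2}. The argument is the blow-up/contradiction argument: assume $\|(u_n,v_n,\omega_n)\|=1$ with $\langle L_3(u_n,v_n,\omega_n),(\varphi,\psi,\phi)\rangle=o_n(1)\|(\varphi,\psi,\phi)\|$, translate $u_n$ by $x^1$, $v_n$ by $y^1$ (as in the two-component case), and observe that $\omega_n$ is translated by $0$ since $\omega_\va\to U$ concentrates at the origin; using $(U_\va,v_\va,\omega_\va)\to(U,0,0)$ one finds each weak limit solves $-\Delta w+w-3U^2w=0$ in the appropriate even subspace and is killed by the orthogonality conditions in $\mathbb E$ together with non-degeneracy of $U$. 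One then derives the contradiction $o_n(1)=1+Ce^{-R}$ just as in the proof of Lemma~\ref{2.2}; the only new bookkeeping is the extra component, which does not interact to leading order because the supports of $U_{\va,r}$, $U_{\va,\rho}$ and $\omega_\va$ are pairwise far apart (distance $O(|\ln\va|)$). With $L_3$ invertible and the cubic remainder estimate $\|R_3'\|\le C\|\cdot\|^2$, $\|R_3''\|\le C\|\cdot\|$ (direct computation, as in Lemma~\ref{l1-1-10}), the contraction-mapping argument of Proposition~\ref{p1-6-3} produces a $C^1$ map $(\varphi,\psi,\phi)=(\varphi(r,\rho),\psi(r,\rho),\phi(r,\rho))\in\mathbf E$ with $J_3'(\varphi,\psi,\phi)=0$ on $\mathbf E$ and the same quantitative bound \eqref{2-20-4} on its norm.

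Finally I would carry out the reduced two-dimensional variational problem. Define $F_3(r,\rho)=I_3(\cdot)$ at the reduced point; expanding via the energy formula (Proposition~\ref{3-1} and its three-component analogue), the reduced energy has the same leading structure as \eqref{348}: a constant $C_\va$ independent of $(r,\rho)$, plus $\ell\bigl(\bar C\va e^{-\sqrt{r^2+\rho^2-2r\rho\cos(\pi/\ell)}}-\frac{C}{mr}e^{-mr}-\frac{C}{m\rho}e^{-m\rho}\bigr)$ for $\ell>2$, plus $O(\va^{(m+\sigma)/(m-n)})$. For $\ell=2$ one has $m=n=\sqrt2\cdot$\,(something)?—more precisely $m=2\sin(\pi/2)=2$, $n=\sqrt{2(1-\cos(\pi/2))}=\sqrt2$, so $m/(m-n)=2/(2-\sqrt2)$ would again be admissible, but the theorem instead records $r_\va=|\ln\va|+o(|\ln\va|)$, reflecting that for $\ell=2$ the two $y$-bumps coincide with the antipodal configuration and the dominant competition is between $e^{-m\check r}$ and $\va e^{-n\check r}$ on the modified domain $\mathcal D_\va=[|\ln\va|/(1+\mu\ln|\ln\va|/|\ln\va|),\,|\ln\va|]$; here the error is $o(\va^2)$. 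In either case I would then repeat verbatim the boundary-versus-interior analysis of the proof of Proposition~\ref{pro2}: show $G(\hat r,\rho_\theta)$ and $G(r,\hat r)$ are at most $2c_1\va^{m/(m-n)}|\ln\va|^{\mu\bar\theta n/(2(m-n)^2)}$ on the outer face and $\le 0$ on the inner face $\check r$, while $G(r_0,\rho_0)\ge G(\rho_{\bar\theta},\rho_{\bar\theta})\ge\frac{\bar C}2\va^{m/(m-n)}|\ln\va|^{\mu\bar\theta n/(m-n)^2}$ strictly exceeds the boundary bound, so the maximizer of $F_3$ over $\mathcal D_\va\times\mathcal D_\va$ is interior and hence a genuine critical point, yielding the solution. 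The main obstacle I anticipate is not any single estimate but verifying that the extra component $\omega_\va\sim U$, sitting at the origin, does not spoil the delicate matching of exponents: one must check that all cross-interactions involving $\omega_{\va,\cdot}$, $U_\va$ and the bump sums are of strictly higher order than $\va^{m/(m-n)}$ (respectively $\va^2$ when $\ell=2$) throughout $\mathcal D_\va$, so that the reduced functional's principal part is unchanged from the two-equation case; this is where the choice of $\mathcal D_\va$ (and its modification for $\ell=2$) and the constant $\mu>m-n$ (resp.\ $\mu>1$) must be used carefully.
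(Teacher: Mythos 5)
Your overall scheme (three-component Lyapunov--Schmidt reduction, invertibility of the linearized operator on $\mathbf{E}$, contraction mapping, then a two-dimensional reduced problem) is the same as the paper's, but there is a genuine gap in the final step: you assert that the reduced energy "has the same leading structure as \eqref{348}" for $\ell>2$ and that "all cross-interactions involving $\omega_{\va,\cdot}$, $U_\va$ and the bump sums are of strictly higher order than $\va^{m/(m-n)}$." This is false for $\ell=2$ and $\ell=3$. The third component contributes a central bump $U_\va\sim U$ sitting at the origin, and its interaction with the rings produces the term $\tilde C\,\va\,(e^{-r}+e^{-\rho})$ in the principal part of the reduced functional (this is the term $\va\sum_j\tilde C_j(e^{-|x^j|}+e^{-|y^j|})$ in Proposition~\ref{491}). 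On $\mathcal{D}_\va$ one has $\va e^{-r}\sim\va^{m/(m-n)}$ exactly when $r\sim|\ln\va|/(m-n)$ and $n=1$, which is precisely the case $\ell=3$; there the central-bump interaction is of the \emph{same} order as $\va e^{-|x^1-y^1|}$ and cannot be absorbed into $O(\va^{(m+\sigma)/(m-n)})$ — the paper needs a separate argument (comparing $\bar G(\bar\rho,\bar\rho)$ with $(1+\sigma)\bar G(\hat r,\bar\rho)$) to locate an interior maximizer. For $\ell=2$ the central-bump term is not merely comparable but \emph{dominant}: since $|x^1-y^1|=\sqrt{r^2+\rho^2}>\max\{r,\rho\}$, the ring-ring attraction $\va e^{-|x^1-y^1|}$ is negligible and the correct balance is $\va e^{-r}$ versus $e^{-2r}/r$, giving $r_\va\sim|\ln\va|$. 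Your own description of this case is internally inconsistent: you attribute the balance to "$e^{-m\check r}$ versus $\va e^{-n\check r}$" with $n=\sqrt2$, which would give $r\sim|\ln\va|/(2-\sqrt2)$, contradicting the domain $[\,|\ln\va|/(1+\mu\ln|\ln\va|/|\ln\va|),\,|\ln\va|\,]$ that you (correctly) copy. The exponent governing the positive term for $\ell=2$ is $1$ (distance to the origin), not $n=\sqrt2$. Your claimed bound on $\|f_\va\|$ has the same omission: it must include $\va e^{-(1-\tau)|x^1|}+\va e^{-(1-\tau)|y^1|}$, as in \eqref{2-20-84}.

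Two smaller remarks. First, in the invertibility argument you speak of the weak limit of the third component being "killed by the orthogonality conditions in $\mathbb{E}$"; the space $\mathbf{E}$ imposes no orthogonality on $\phi$, and none is needed, because the kernel of $\Delta-(1-3U^2)I$ in $H_s$ is already trivial (the paper makes this point explicitly). Second, the case split $\ell=2$, $\ell=3$, $\ell>3$ is not optional bookkeeping: it is exactly the trichotomy $|x^1-y^1|>(1+\sigma)\max\{r,\rho\}$, $|x^1-y^1|\sim r\sim\rho$, $|x^1-y^1|<(1-\sigma)\min\{r,\rho\}$ that decides which of the two positive terms $\va e^{-|x^1-y^1|}$ and $\va(e^{-r}+e^{-\rho})$ drives the reduced problem, and your proposal only covers the third regime.
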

\begin{proof}
The proof is similar to that of Proposition~\ref{pro2}, we only give
a sketch here.

Define
\[
\begin{array}{ll}
\bar I(u,v,\omega)=&\dis\frac12\int_{\R^3} \bigl( |\nabla u|^2+ u^2+
|\nabla v|^2+
v^2+|\nabla \omega|^2+ \omega^2\bigl)\vspace{0.2cm}\\
&-\dis\frac1{4}\int_{\R^3}\bigl(u^{4}+v^{4}+\omega^4\bigl)+\va\int_{\R^3}(uv+u\omega+v\omega),\quad
\forall\,\,\,(u,v,\omega)\in (H_s)^3,
\end{array}
\]
and
\[
\begin{array}{ll}
\bar J(\varphi,\psi,\phi)=& \bar
I(U_{\va,r}+v_{\va,\rho}+\omega_\va+\varphi,\,\omega_{\va,r}+U_{\va,\rho}+v_\va+\psi,\,
v_{\va,r}+\omega_{\va,\rho}+U_\va+\phi),\vspace{0.2cm}\\
&\quad\quad \forall\,\,\,(\varphi,\psi,\phi)\in \mathbf{E}.
\end{array}
\]

Proceeding as we prove Proposition~\ref{p1-6-3}, we find that for
$\va$ sufficiently small, there is a $C^1$ map from $(\mathcal
{D}_\va)^2$ to $\mathbf{E}$:
$(\varphi,\psi,\phi)=(\varphi(r,\rho),\psi(r,\rho),\phi(r,\rho))$,
satisfying
\[
 \bar J'_{(\varphi,\psi,\phi)}(\varphi,\psi,\phi)
=0,\quad
 \hbox{on} \,\,\, \mathbf{E},
\]
and
\begin{equation}\label{2-20-84}
\|(\varphi,\psi,\phi)\|= O\Bigl(
\frac{e^{-|x^1-x^2|}}{|x^1-x^2|}+\frac{e^{-|y^1-y^2|}}{|y^1-y^2|}+\va
e^{-(1-\tau)|x^1-y^1|}+\va e^{-(1-\tau)|x^1|}+\va
e^{-(1-\tau)|y^1|}+\va^4\Bigr).
\end{equation}
We should point out here that  when we carry out  the finite
dimensional reduction, we do not impose an orthogonal decomposition
on $\phi$ (see the definition of $\mathbf{E}$), since the kernel of
the operator $\Delta-(1-3U^2)I$ in $H_s$ is $\{0\}$.

It follows from Proposition~\ref{491} and \eqref{2-20-84} that
\[
 \begin{split}
&\bar F(r,\rho)=:\bar J(\varphi(r,\rho),\psi(r,\rho),\phi(r,\rho))\\
=&\dis\sum\limits_{j=1}^\ell \bar I(U_{\va,x^j}, v_{\va,x^j},
\omega_{\va,x^j})+\dis\sum\limits_{j=1}^\ell \bar I(U_{\va,y^j},
v_{\va,y^j},\omega_{\va,y^j})+\bar I(U_\va,v_\va,\omega_\va)\\
&-\dis\sum\limits_{i<j}^\ell C_{ij}\frac{e^{-|x^i-x^j|}}{|x^i-x^j|}
-\dis\sum\limits_{i<j}^\ell C_{ij} \frac{e^{-|y^i-y^j|}}{|y^i-y^j|}+ \va\dis\sum\limits_{i,j=1}^\ell \bar C_{ij} e^{-|x^i-y^j|}
+\va\dis\sum\limits_{j=1}^\ell \tilde C_j (e^{-|x^j|}+e^{-|y^j|})\\
&+O(\va e^{-(1-\tau)|y^1-y^2|}+\va
e^{-(1-\tau)|x^1-x^2|}+\va^2 (e^{-(1-\tau)|x^1|}+e^{-(1-\tau)|y^1|}+e^{-(1-\tau)|x^1-y^1|})+\va^4)\\
&+O\Bigl(\frac{e^{-|x^1-x^2|}}{|x^1-x^2|}+\frac{e^{-|y^1-y^2|}}{|y^1-y^2|}+\va
e^{-(1-\tau)|x^1-y^1|}+\va e^{-(1-\tau)|x^1|}+\va
e^{-(1-\tau)|y^1|}\Bigl)^2\\
=&C_\va+\ell\Bigl(\bar C\va e^{-\sqrt{r^2+\rho^2-2r\rho
\cos\frac{\pi}{\ell}}}+\tilde C \va (e^{-r}+e^{-\rho})
-\frac{C}{mr}e^{-mr}-\frac{C}{m\rho}e^{-m\rho}\Bigl)+O(\va^{\frac{m}{m-n}+\sigma}),
\end{split}
\]
where $C_\va>0$ depends on $\va$ but is independent of $r$ and $\rho$. $\bar C,\,\tilde C$ and $C$ are positive constants independent of $\va,\,r$ and $\rho$.

Define function
$$
\bar G(r,\rho)=\bar C\va e^{-\sqrt{r^2+\rho^2-2r\rho
\cos\frac{\pi}{\ell}}}+\tilde C \va
(e^{-r}+e^{-\rho})-\frac{C}{mr}e^{-mr}-\frac{C}{m\rho}e^{-m\rho},\,\,\,r,\rho\in
\mathcal{D}_\va.
$$
We want  to verify that $\bar G(r,\rho)$ achieves maximum at some
point $(r_0,\rho_0)$ which is in the interior of
$\mathcal{D}_\va\times \mathcal{D}_\va$. We have three cases:
(1)\,\,$\ell=2$; (2)\,\,$\ell=3$;\,\,(3)\,\,$\ell>3$.

{\bf Case (1)}:\,\,$\ell=2$.

In this situation, $m=2,\,n=\sqrt{2}$,
$|x^1-y^1|=\sqrt{r^2+\rho^2-2r\rho
\cos\frac{\pi}{\ell}}>(1+\sigma)\max\{r,\,\rho\}$ for some
$\sigma>0$. Hence, without loss of generality, we suppose
\begin{eqnarray*}
\bar G(r,\rho)&=&(\tilde C \va e^{-r}-\frac{C}{2r}e^{-2r})+(\tilde
C \va e^{-\rho}-\frac{C}{2\rho}e^{-2\rho})\\
&=:& G(r)+G(\rho),\,\,\,r,\rho\in \mathcal{D}_\va.
\end{eqnarray*}
Therefore we need to modify $\mathcal{D}_\va$ as
$$
\mathcal{D}_{\va}=\Bigl[\frac
{|\ln\va|}{1+\frac{\mu\ln|\ln\va|}{|\ln\va|}},\,|\ln\va|\Bigr],\quad
\mu>1.
$$

 Using the argument to prove Proposition~\ref{pro2} (see Remark~\ref{re3}), we can find
$\bar r_0$  which is interior points of $\mathcal{D}_\va$ such that
\begin{eqnarray*}
&G(\bar r_0)=\max\limits_{r\in \mathcal{D}_\va}{G(r)}\ge
C_1\va^2|\ln\va|^{\tilde\theta}\ge C_1\va^2\ge
C_1\va^{\frac{2}{2-\sqrt{2}}}
\end{eqnarray*}
for some $\tilde\theta>0$ and $C_1>0$.

 Suppose that $\bar G(r,\rho)$ achieves
maximum at $(r_0,\rho_0)\in \mathcal{D}_\va\times \mathcal{D}_\va$,
then
\begin{equation}\label{420}
\bar G(r_0,\rho_0)\ge 2G(\bar r_0)\ge
2C_1\va^2|\ln\va|^{\tilde\theta}.
\end{equation}

On the other hand, there exist $\sigma>0$ and $C_2>0$ such that
\begin{equation}\label{425}
\begin{array}{ll}
&\bar G(\check{r},\rho)\le -C_2\va^2|\ln\va|^{2\mu-1}+G(\bar
r_0)<(1-\sigma)\bar G(r_0,\rho_0),\quad\,\,\forall\,\,\rho\in \mathcal{D}_\va,\vspace{0.2cm}\\
&\bar G(\hat{r},\rho)\le C_2\va^2+G(\bar r_0)<(1-\sigma)\bar
G(r_0,\rho_0),\quad\,\,\forall\,\,\rho\in \mathcal{D}_\va,\vspace{0.2cm}\\
&\bar G(r,\check r)\le G(\bar
r_0)-C_2\va^2|\ln\va|^{2\mu-1}<(1-\sigma)\bar G(r_0,\rho_0),\quad\,\,\forall\,\,r\in \mathcal{D}_\va,\vspace{0.2cm}\\
&\bar G(r,\hat r)\le G(\bar r_0)+C_2\va^2<(1-\sigma)\bar
G(r_0,\rho_0),\quad\,\,\forall\,\,r\in \mathcal{D}_\va,
\end{array}
\end{equation}
where $\check r$ and $\hat r$ are modified respectively  as
$$
\check r=\frac {|\ln\va|}{1+\frac{\mu\ln|\ln\va|}{|\ln\va|}},\quad
\hat r=|\ln\va|.
$$

Therefore, $(r_0,\rho_0)$ is an interior point of
$\mathcal{D}_\va\times \mathcal{D}_\va$. Comparing \eqref{425}
with $\bar F(r,\rho)$ and  \eqref{420}, we conclude that $\bar
F(r,\rho)$ has (local) maximizer in the interior of
$\mathcal{D}_\va\times \mathcal{D}_\va$.

{\bf Case (2)}:\,\,$\ell=3$.

In this case, $m=\sqrt{3},\,n=1$, and it is possible that $r\sim
\rho \sim |x^1-y^1|=\sqrt{r^2+\rho^2-2r\rho \cos\frac{\pi}{\ell}}$.
So we consider
$$
\bar G(r,\rho)=\bar C\va e^{-\sqrt{r^2+\rho^2-r\rho }}+\tilde C \va
(e^{-r}+e^{-\rho})-\frac{C}{\sqrt{3}r}e^{-\sqrt{3}r}-\frac{C}{\sqrt{3}\rho}e^{-\sqrt{3}\rho},\,\,\,r,\rho\in
\mathcal{D}_\va.
$$

Now we analyze  $\bar G(r,\rho)$ on $\partial(\mathcal{D}_\va\times
\mathcal{D}_\va)$.

Firstly, again using \eqref{345}, we see
\begin{equation}\label{432}
\begin{array}{ll}
&\bar G(\check{r},\rho)\le \hat C \va
e^{-\check{r}}-\dis\frac{C}{\sqrt{3}\check{r}}e^{-\sqrt{3}\check{r}}<0,\forall\,\,\rho\in
\mathcal{D}_\va,\vspace{0.2cm}\\
&\bar G(r,\check{r})\le \hat C \va
e^{-\check{r}}-\dis\frac{C}{\sqrt{3}\check{r}}e^{-\sqrt{3}\check{r}}<0,\forall\,\,r\in
\mathcal{D}_\va,
\end{array}
\end{equation}
where $\hat C$ and $C$ are positive constants independent of $\va$.

On the other hand, suppose that, in $\mathcal{D}_\va$, $\bar
G(\hat{r},\rho)$ achieves maximizer $\bar\rho\in (\check r,\,\hat
r)$. Arguing as we prove Proposition~\ref{pro2} (see
Remark~\ref{re3}), we find
\begin{equation}\label{450}
\bar G(\hat{r},\bar\rho)\ge
C_3\va^{\frac{\sqrt{3}}{\sqrt{3}-1}}|\ln\va|^{\theta_0}
\end{equation}
for some $\theta_0>0$ and $C_3>0$.

 Since
$$
\tilde C \va
e^{-\hat{r}}-\frac{C}{\sqrt{3}\hat{r}}e^{-\sqrt{3}\hat{r}}=O(\va^{\frac{\sqrt{3}}{\sqrt{3}-1}})
$$
and
$$
e^{-\sqrt{\hat{r}^2+\bar\rho^2-\hat{r}\bar\rho}}< e^{-\bar\rho},
$$
we see
\begin{eqnarray*}
C_3\va^{\frac{\sqrt{3}}{\sqrt{3}-1}}|\ln\va|^{\theta_0}&\le& \bar
G(\hat{r},\bar\rho)<\bar C\va
e^{-\bar\rho}+O(\va^{\frac{\sqrt{3}}{\sqrt{3}-1}})+\tilde C\va
e^{-\bar\rho}-\frac{C}{\sqrt{3}\bar\rho}e^{-\sqrt{3}\bar\rho}.
\end{eqnarray*}
Hence, there exists $\sigma>0$ such that
\begin{eqnarray*}
\bar G(\bar\rho,\bar\rho)&=&\bar C\va e^{-\bar\rho}+2\bigl(\tilde
C\va
e^{-\bar\rho}-\frac{C}{\sqrt{3}\bar\rho}e^{-\sqrt{3}\bar\rho}\bigr)\\
&>& \bar G(\hat{r},\bar\rho)+\tilde C\va
e^{-\bar\rho}-\frac{C}{\sqrt{3}\bar\rho}e^{-\sqrt{3}\bar\rho}\\
&>& (1+\sigma)\bar G(\hat{r},\bar\rho),
\end{eqnarray*}
which implies
$$
\max\limits_{r,\rho\in \mathcal{D}_\va}\bar G(r,\rho)\ge \bar
G(\bar\rho,\bar\rho)>(1+\sigma)\bar
G(\hat{r},\bar\rho)=(1+\sigma)\max\limits_{\rho\in
\mathcal{D}_\va}\bar G(\hat{r},\rho).
$$
Similarly,
$$
\max\limits_{r,\rho\in \mathcal{D}_\va}\bar G(r,\rho)\ge
(1+\sigma)\max\limits_{r\in \mathcal{D}_\va}\bar G(r,\hat r).
$$
These two estimates and \eqref{432} imply that   $\bar F(r,\rho)$
has (local) maximizer in the interior of $\mathcal{D}_\va\times
\mathcal{D}_\va$.

{\bf Case (3)}:\,\,$\ell>3$.

In this situation, $\sqrt{r^2+\rho^2-2r\rho
\cos\frac{\pi}{\ell}}=|x^1-y^1|<(1-\sigma) \min\{r,\,\rho\}$ for
some $\sigma>0$. Then
$$
\bar G(r,\rho)=\bar C\va e^{-\sqrt{r^2+\rho^2-2r\rho
\cos\frac{\pi}{\ell}}}-\frac{C}{mr}e^{-mr}-\frac{C}{m\rho}e^{-m\rho},\,\,\,r,\rho\in
\mathcal{D}_\va.
$$

This is exactly $G(r,\rho)$ in the proof of Proposition~\ref{pro2}
and we omit the rest of the proof.

As a result, we complete the proof.
\end{proof}

\appendix

\section{Energy expansions}

In this section, we will expand the energy
$I(U_{\va,r}+v_{\va,\rho},\,v_{\va,r}+U_{\va,\rho})$, which is
defined as
\[
\begin{array}{ll}
I(u,v)=&\dis\frac12\int_{\R^3} \bigl( |\nabla u|^2+ u^2+ |\nabla
v|^2+v^2\bigl)\vspace{0.2cm}\\
&-\dis\frac1{4}\int_{\R^3}\bigl(u^4+v^{4}\bigl)+\va\int_{\R^3}uv,\quad
(u,v)\in H_s\times H_s.
\end{array}
\]

Recall that $(U_\va,v_\va)$ has the form
\begin{equation}\label{eq41}
\begin{array}{ll}
U_\varepsilon= U+ \va^2 p_\va(r)+w, \,\,\,v_\varepsilon= \va
q_\va(r)+h,
\end{array}
\end{equation}
where
\begin{equation}\label{eq42}
p_\va(r)\le Ce^{-(1-\tau)r},\,\,q_\va(r)\le
Ce^{-(1-\tau)r},\,\,\,\|(w,h)\|\le C\va^4.
\end{equation}

The following estimates can be found  in \cite{ACR}.

\begin{proposition}\label{pro11}
Suppose  that $u(x),v(x)\in H_r^1(\R^N)\,(N\ge 1)$ satisfy
$$
u(r)\sim r^\alpha e^{-\beta r},\,\,\,v(r)\sim r^\gamma e^{-\eta
r},\,\,\,(r\to +\infty),
$$
where $\alpha,\,\gamma\in \R,\,\beta>0,\,\eta>0$. Let $y\in\R^N$
with $|y|\to +\infty$. We have

(i)\,\,if $\beta< \eta$, then
$$
\int_{\R^N}u_{y}v\sim |y|^{\alpha}e^{-\beta|y|}.
$$

(ii)\,\, if $\beta=\eta$, suppose for simplicity, that $\alpha\ge
\gamma$. Then
$$
\int_{\R^N}u_{y}v\sim \left\{
\begin{array}{ll}
 e^{-\beta|y|}|y|^{\alpha+\gamma+\frac{1+N}2}\,\,\,\,\,&\hbox{if}\,\,\gamma>-\frac{1+N}2,\vspace{0.2cm}\\
e^{-\beta|y|}|y|^{\alpha}\ln
|y|\,\,\,\,\,&\hbox{if}\,\,\gamma=-\frac{1+N}2,\vspace{0.2cm}\\
e^{-\beta|y|}|y|^{\alpha}\,\,\,\,\,&\hbox{if}\,\,\gamma<-\frac{1+N}2.
\end{array}
\right.
$$

\end{proposition}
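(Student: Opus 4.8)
Since Proposition~\ref{pro11} is only quoted from \cite{ACR}, I will just indicate how the estimates are obtained; the plan is a direct interaction-integral computation, carried out separately in the two regimes $\beta<\eta$ and $\beta=\eta$. First I would normalize: by rotational invariance of $\int_{\R^N}u_yv$ we may take $y=|y|e_1$ and set $R=|y|\to+\infty$, and replace the asymptotics by two-sided bounds $c_1(1+r)^\alpha e^{-\beta r}\le|u(r)|\le c_2(1+r)^\alpha e^{-\beta r}$ for $r\ge r_0$ (and boundedness on $[0,r_0]$), and similarly for $v$, with $r_0$ a large fixed constant. Only these bounds are needed for the upper estimates; for the lower ones one uses in addition that $u$ and $v$ are eventually of one sign, which holds for all the functions to which the proposition is applied. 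The two elementary inputs used throughout are the reverse triangle inequality $|x-y|\ge R-|x|$ and the identity $|x-y|+|x|\ge R$, with equality in the latter exactly on the segment $[0,y]$.

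In case (i), $\beta<\eta$, the weight $e^{-\beta|x-y|-\eta|x|}$ concentrates near the center $x=0$ of $v$, and I would split $\R^N=\{|x|\le r_0\}\cup\{r_0<|x|\le R/2\}\cup\{|x|>R/2\}$. On the first piece $u(x-y)\asymp R^\alpha e^{-\beta R}$ while $\int_{\{|x|\le r_0\}}v$ is a fixed constant of definite sign, which gives both the lower bound $\gtrsim R^\alpha e^{-\beta R}$ and an $O(R^\alpha e^{-\beta R})$ upper bound on that piece. On the second, $e^{-\beta|x-y|}\le e^{-\beta R}e^{\beta|x|}$ together with $|x-y|\asymp R$ bounds the integrand by $CR^\alpha e^{-\beta R}(1+|x|)^\gamma e^{-(\eta-\beta)|x|}$, which is integrable since $\eta-\beta>0$, contributing $O(R^\alpha e^{-\beta R})$. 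On the third, writing $\beta|x-y|+\eta|x|\ge\bigl(\tfrac12\beta R+\tfrac14(\eta-\beta)R\bigr)+\tfrac12\bigl(\beta|x-y|+\eta|x|\bigr)$ produces an extra decay factor $e^{-\tfrac14(\eta-\beta)R}$ multiplying an interaction kernel whose integral is only polynomially large in $R$, so this piece is $o(R^\alpha e^{-\beta R})$. Summing the three pieces gives $\int_{\R^N}u_yv\asymp R^\alpha e^{-\beta R}$.

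In case (ii), $\beta=\eta$, the minimum $\beta R$ of $\beta(|x-y|+|x|)$ is attained along the whole segment $[0,y]$, so the integral concentrates in a thin tube around it and is evaluated by a Gaussian (Laplace) computation. After removing fixed neighborhoods of $0$ and of $y$, which contribute $O(R^\alpha e^{-\beta R})$ and $O(R^\gamma e^{-\beta R})$ respectively, I would use tube coordinates $x=(t,x'')$ with $x''\in\R^{N-1}$ and expand, for $r_0<t<R-r_0$, $|x|=t+\frac{|x''|^2}{2t}+\cdots$ and $|x-y|=(R-t)+\frac{|x''|^2}{2(R-t)}+\cdots$; integrating the resulting Gaussian in $x''$ yields the factor $\bigl(\tfrac{t(R-t)}{R}\bigr)^{(N-1)/2}$ and reduces the whole integral to
\[
e^{-\beta R}\,R^{-(N-1)/2}\int_{r_0}^{R-r_0} t^{\,\gamma+(N-1)/2}(R-t)^{\,\alpha+(N-1)/2}\,dt .
\]
The size of this Beta-type integral is governed by the exponent $a:=\gamma+(N+1)/2$ at the lower end (the upper-end exponent $\alpha+(N+1)/2\ge a$ is never worse): if $a>0$ it is $\asymp R^{\alpha+\gamma+N}$, giving $e^{-\beta R}R^{\alpha+\gamma+(N+1)/2}$; if $a=0$ it is $\asymp R^{\alpha+(N-1)/2}\ln R$, giving $e^{-\beta R}R^\alpha\ln R$; and if $a<0$ the integral is dominated by the lower end $t\sim r_0$, is $\asymp R^{\alpha+(N-1)/2}$, and yields $e^{-\beta R}R^\alpha$ (the same order as the removed neighborhood of $0$). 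Comparing each of these with the removed-neighborhood contributions $O(R^\alpha e^{-\beta R})$ and $O(R^\gamma e^{-\beta R})$ (recall $\alpha\ge\gamma$) gives exactly the three stated alternatives.

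The step I expect to be the main obstacle is making the Laplace estimate of case (ii) uniform: controlling the Taylor remainders in $|x|$ and $|x-y|$ uniformly across the tube, splicing the tube estimate to the direct estimates near the two endpoints $t\to r_0$ and $t\to R-r_0$ where the quadratic approximation degenerates, and — for all the lower bounds — checking that the leading-order contributions do not cancel, which is precisely why one invokes the eventual one-sign property of $u$ and $v$ rather than only their stated asymptotics.
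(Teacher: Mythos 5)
The paper itself offers no proof of Proposition~\ref{pro11}: it is quoted from \cite{ACR} with the single sentence ``The following estimates can be found in \cite{ACR}.'' Your sketch is the standard direct proof of this interaction estimate, and its skeleton is sound: the near/middle/far splitting in case (i), and in case (ii) the tube-coordinate Laplace expansion $|x|+|x-y|=R+\frac{R|x''|^2}{2t(R-t)}+\cdots$, whose Gaussian integration in $x''$ produces the factor $\bigl(t(R-t)/R\bigr)^{(N-1)/2}$ and reduces everything to the Beta-type integral $\int_{r_0}^{R-r_0}t^{\gamma+(N-1)/2}(R-t)^{\alpha+(N-1)/2}\,dt$. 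Your case analysis on $a=\gamma+(N+1)/2$ reproduces exactly the three stated alternatives, and you are right to flag that the lower bounds require the eventual one-sign property, since the bare asymptotics $u\sim r^\alpha e^{-\beta r}$ alone would allow the leading constant to vanish by cancellation.

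One step is written too loosely to stand as is: in case (i) on $\{|x|>R/2\}$ you extract $e^{-\frac12\beta R-\frac14(\eta-\beta)R}$ and assert that the residual kernel $(1+|x-y|)^\alpha(1+|x|)^\gamma e^{-\frac12(\beta|x-y|+\eta|x|)}$ has integral ``only polynomially large in $R$.'' That gives the bound $e^{-\frac12\beta R-\frac14(\eta-\beta)R}\,\mathrm{poly}(R)$, which is $o(R^\alpha e^{-\beta R})$ only when $\eta>3\beta$, not in general. The repair is immediate: either observe that the residual kernel's integral itself carries a further factor $e^{-\frac12\beta R}$, or, more cleanly, bound $e^{-\eta|x|}\le e^{-(\eta-\beta)R/2}e^{-\beta|x|}$ on that region and invoke the equal-rate (case (ii)) upper bound $O(e^{-\beta R}\,\mathrm{poly}(R))$ for what remains. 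With that correction, the argument is complete modulo the uniform control of the Taylor remainders in the tube and the splicing near the two endpoints, which you have already identified as the technical crux.
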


\begin{proposition}\label{3-1} We have
\begin{eqnarray*}
&&I(U_{\va,r}+v_{\va,\rho},\,v_{\va,r}+U_{\va,\rho})\\
&=&\dis\sum\limits_{j=1}^\ell I(U_{\va,x^j},
v_{\va,x^j})+\dis\sum\limits_{j=1}^\ell I(U_{\va,y^j},
v_{\va,y^j})\\
&&-\dis\sum\limits_{i<j}^\ell C_{ij}\frac{e^{-|x^i-x^j|}}{|x^i-x^j|}
-\dis\sum\limits_{i<j}^\ell C_{ij}\frac{e^{-|y^i-y^j|}}{|y^i-y^j|}+\va\dis\sum\limits_{i,j=1}^\ell\bar C_{ij} e^{-|x^i-y^j|}\\
&&+O(\va e^{-(1-\tau)|y^1-y^2|}+\va^2 e^{-(1-\tau)|x^1-y^1|}+\va
e^{-(1-\tau)|x^1-x^2|}+\va^4),
\end{eqnarray*}
where $C_{ij},\bar C_{ij}\,(i,j=1,\cdots,\ell)$ are positive constants independent of $\va$, $r$
and $\rho$.
\end{proposition}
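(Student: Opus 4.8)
The plan is to regard the argument of $I$ as a superposition of $2\ell$ \emph{exact} solutions of~\eqref{eqmain} and to carry out the standard interaction expansion; the essential point is that pairing the Euler--Lagrange equations against one another makes the entire linear coupling cancel, so that only pure Schr\"odinger-type cross terms remain, to be estimated by Proposition~\ref{pro11}. Concretely, set $P^x_i=(U_{\va,x^i},v_{\va,x^i})$ and $P^y_j=(v_{\va,y^j},U_{\va,y^j})$, $i,j=1,\dots,\ell$. By translation invariance each $(U_{\va,\xi},v_{\va,\xi})$ solves~\eqref{eqmain}, and since~\eqref{eqmain} is symmetric under interchange of its two equations, so does $(v_{\va,\xi},U_{\va,\xi})$; hence all $2\ell$ of the $P^x_i,P^y_j$ solve~\eqref{eqmain} and $(u,v):=(U_{\va,r}+v_{\va,\rho},\,v_{\va,r}+U_{\va,\rho})=\sum_i P^x_i+\sum_j P^y_j$.

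Write $I(u,v)=\frac12 a(u,u)+\frac12 a(v,v)-\frac14\int_{\R^3}(u^4+v^4)+\va\int_{\R^3}uv$ with $a(f,g)=\int_{\R^3}(\nabla f\nabla g+fg)$. Expanding by bilinearity and using, for each ordered pair $(k,l)$, the equation for $P_l$ in the forms $a(u_{P_k},u_{P_l})=\int_{\R^3}u_{P_k}(u_{P_l}^3-\va v_{P_l})$ and $a(v_{P_k},v_{P_l})=\int_{\R^3}v_{P_k}(v_{P_l}^3-\va u_{P_l})$, the $\va$-terms so produced total $-\frac\va2\sum_{k,l}\big(\int_{\R^3}u_{P_k}v_{P_l}+\int_{\R^3}v_{P_k}u_{P_l}\big)=-\va\int_{\R^3}uv$, which exactly cancels the explicit coupling. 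This leaves
\[
I(u,v)=\frac12\sum_{k,l}\Big(\int_{\R^3}u_{P_k}u_{P_l}^3+\int_{\R^3}v_{P_k}v_{P_l}^3\Big)-\frac14\int_{\R^3}\Big(\sum_k u_{P_k}\Big)^4-\frac14\int_{\R^3}\Big(\sum_k v_{P_k}\Big)^4 .
\]
The $k=l$ part reassembles, by the same identity on one pair, into $\sum_k I(P_k)=\sum_j I(U_{\va,x^j},v_{\va,x^j})+\sum_j I(U_{\va,y^j},v_{\va,y^j})$ (using the $u\leftrightarrow v$ symmetry of $I$ on the $y$-bumps); and after expanding $\big(\sum_k u_{P_k}\big)^4$ by the multinomial theorem and relabelling $k\leftrightarrow l$, the off-diagonal part equals $-\frac12\sum_{k\ne l}\big(\int_{\R^3}u_{P_k}^3u_{P_l}+\int_{\R^3}v_{P_k}^3v_{P_l}\big)$ plus the remaining quartic cross terms (those quadratic in two distinct bumps) and three- and four-body terms.

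It remains to estimate the pairwise integrals, using Proposition~\ref{pro11} together with $U_\va=U+\va^2p_\va+w$, $v_\va=\va q_\va+h$, $\|w\|+\|h\|\le C\va^4$, and the sharp asymptotics of $v_\va$: $q_\va\to\tilde v_1$, the solution of $-\Delta\tilde v_1+\tilde v_1=-U$, which is negative and decays at the \emph{same} exponential rate as $U$, i.e.\ $v_\va(r)\sim -c\,\va\,e^{-r}$ with $c>0$ (a resonance effect, $\tilde v_1$ carrying $U$'s decay rate with a milder polynomial factor). For an $x$--$x$ pair $\{P^x_i,P^x_{i'}\}$ the leading term is $-\int_{\R^3}U_{x^i}^3U_{x^{i'}}\sim -C_{ii'}e^{-|x^i-x^{i'}|}/|x^i-x^{i'}|$ with $C_{ii'}>0$, all $v_\va$-contributions being $O(\va^4)$ and the remaining quartic cross terms and three- and four-body terms being $O(\va^2e^{-(1-\tau)|x^i-x^{i'}|}+\va^4)$; the $y$--$y$ pairs are identical with $x\to y$. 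For an $x$--$y$ pair $\{P^x_i,P^y_j\}$ the leading term is $-\frac12\big(\int_{\R^3}U_{\va,x^i}^3v_{\va,y^j}+\int_{\R^3}U_{\va,y^j}^3v_{\va,x^i}\big)$, which — since $v_\va$ decays as slowly as $U$ — is \emph{not} negligible: Proposition~\ref{pro11} yields $\int_{\R^3}U_{\va,x^i}^3v_{\va,y^j}=-\bar C_{ij}\va e^{-|x^i-y^j|}(1+o(1))$, $\bar C_{ij}>0$ (positive because $v_\va<0$ near its centre, and $i,j$-independent by the radial symmetry of $U$), so this pair contributes $+\bar C_{ij}\va e^{-|x^i-y^j|}$, while $\int_{\R^3}v_{\va,y^j}^3U_{\va,x^i}$, $\int_{\R^3}v_{\va,x^i}^3U_{\va,y^j}$ and the remaining two-, three- and four-body terms are $O(\va^2e^{-(1-\tau)|x^1-y^1|}+\va^4)$. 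Summing over all pairs and using, on $\mathcal D_\va\times\mathcal D_\va$, that $|x^i-x^{i'}|=mr$, $|y^j-y^{j'}|=m\rho$ with $m=2\sin\frac\pi\ell$, and $|x^i-y^j|\ge n\min\{r,\rho\}$ with $n=\sqrt{2(1-\cos\frac\pi\ell)}$, together with $m/(m-n)>2$ and $2n>m$, makes every discarded product genuinely of smaller order and gives the stated expansion with remainder $O(\va e^{-(1-\tau)|y^1-y^2|}+\va^2e^{-(1-\tau)|x^1-y^1|}+\va e^{-(1-\tau)|x^1-x^2|}+\va^4)$.

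The delicate point — and where I expect the real work to be — is the $x$--$y$ interaction. A naive count makes it look as if the $O(\va)$ cross term between an $x$- and a $y$-bump comes from the coupling $\va\int_{\R^3}uv$, and as if $\int_{\R^3}U^3v_\va=o(\va)$ because $v_\va=O(\va)$; both guesses are wrong. The coupling cancels identically once the quadratic form is integrated by parts against the equations, and the genuine leading $x$--$y$ cross term is $-\int_{\R^3}U_{\va,x^i}^3v_{\va,y^j}$, of order exactly $\va e^{-|x^i-y^j|}$ \emph{precisely because} $v_\va$ inherits the decay rate $1$ of $U$ rather than a faster one. Making this precise requires the sharp asymptotics of $v_\va$ — that $\va^{-1}e^{r}v_\va(r)$ tends to a negative constant — which is strictly stronger than the bound $|v_\va(r)|\le C\va e^{-(1-\tau)r}$ of Proposition~\ref{pro2.1}, together with the Laplace-type estimates of Proposition~\ref{pro11} and a careful sign check that this term survives (is not cancelled) with $\bar C_{ij}>0$. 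Everything else — the quartic cross terms quadratic in two distinct bumps, the three- and four-body terms and the corrections from $p_\va$, $w$, $h$ — is routine once the decay rates and the restriction $(r,\rho)\in\mathcal D_\va\times\mathcal D_\va$ are in hand.
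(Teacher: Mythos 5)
Your expansion is correct and reaches the same formula, but by a genuinely different bookkeeping of the cross terms. The paper's proof (Proposition~\ref{3-1}) splits $I$ as $I(U_{\va,r},v_{\va,r})+I(U_{\va,\rho},v_{\va,\rho})-I_1-I_2+\va I_3$: the quadratic cross terms $a(U_{\va,x^i},v_{\va,y^j})$ are converted via the equation for $U_\va$ into $\int U_{\va,x^i}^3v_{\va,y^j}-\va\int v_{\va,x^i}v_{\va,y^j}$, which are then absorbed into the definitions of $I_1$ and $I_3$, so that the surviving $O(\va e^{-|x^i-y^j|})$ term is read off from the \emph{explicit coupling}, namely $\va\int U_{\va,x^i}U_{\va,y^j}$ inside $\va I_3$; everything involving $v_\va$ then only needs the upper bound $|v_\va|\le C\va e^{-(1-\tau)r}$. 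You instead test every quadratic cross term against the \emph{other} equation of the pair, which makes the coupling $\va\int uv$ cancel identically and relocates the leading $x$--$y$ interaction into $-\int U_{\va,x^i}^3v_{\va,y^j}$. Both identities are exact and the two answers must agree; indeed, since $-\Delta U+U=U^3$ gives $G*U^3=U$ for the kernel $G$ of $(-\Delta+1)^{-1}$ and $\tilde v_1=-G*U$, one has exactly $\int U_{x^i}^3\tilde v_{1,y^j}=-\int U_{x^i}U_{y^j}$, so your constant coincides with the paper's $\bar C_{ij}$. What the paper's bookkeeping buys is that it never needs a lower bound or sign on $v_\va$ at infinity; what yours buys is a cleaner, fully symmetric treatment of all $2\ell$ bumps and a transparent explanation of why the naive $O(\va)$ coupling term disappears.

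The one soft spot is the ingredient you yourself flag: your route requires the sharp signed asymptotics $\va^{-1}e^{r}v_\va(r)\to -c<0$, which is strictly stronger than the estimate \eqref{eq23} of Proposition~\ref{pro2.1}, and you assert rather than prove it. It is true (write $\tilde v_1=w/r$, so $-w''+w=-Ae^{-r}(1+o(1))$ has the resonant particular solution $-\tfrac{A}{2}re^{-r}$, whence $\tilde v_1\sim-\tfrac{A}{2}e^{-r}$), but the cleaner fix is the duality identity above: $\int U_{x^i}^3\tilde v_{1,y^j}=-\int U_{x^i}U_{y^j}$ reduces your cross term to one governed by the known asymptotics of $U$ alone, with the correct sign, and no pointwise analysis of $\tilde v_1$ is needed. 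With that observation supplied, your argument is complete and matches the stated remainder, since on $\mathcal D_\va\times\mathcal D_\va$ the discarded two-, three- and four-body products are indeed dominated by $\va e^{-(1-\tau)|x^1-x^2|}+\va e^{-(1-\tau)|y^1-y^2|}+\va^2e^{-(1-\tau)|x^1-y^1|}+\va^4$.
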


\begin{proof}
Write
\begin{equation}\label{43}
\begin{array}{ll}
&I(U_{\va,r}+v_{\va,\rho},\,v_{\va,r}+U_{\va,\rho})\vspace{0.2cm}\\
=&I(U_{\va,r}, v_{\va,r})+I(U_{\va,\rho}, v_{\va,\rho})\vspace{0.2cm}\\
&-\dis\frac1 4 \int_{\R^3}\Bigl(\bigl(
U_{\va,r}+v_{\va,\rho}\bigr)^4-U^4_{\va,r}-v^4_{\va,\rho}-4\dis\sum\limits_{i,j=1}^\ell
U^3_{\va,x^i}v_{\va,y^j} \Bigr)\vspace{0.2cm}\\
&-\dis\frac1 4 \int_{\R^3}\Bigl(\bigl(
U_{\va,\rho}+v_{\va,r}\bigr)^4-U^4_{\va,\rho}-v^4_{\va,r}-4\dis\sum\limits_{i,j=1}^\ell
U^3_{\va,y^i}v_{\va,x^j} \Bigr)\vspace{0.2cm}\\
&+\va\dis\int_{\R^3}\Bigl((U_{\va,r}+v_{\va,\rho})(v_{\va,r}+U_{\va,\rho})-U_{\va,r}v_{\va,r}-
U_{\va,\rho}v_{\va,\rho}-2\dis\sum\limits_{i,j=1}^\ell
v_{\va,x^i}v_{\va,y^j}\Bigr)\vspace{0.2cm}\\
=:&I(U_{\va,r}, v_{\va,r})+I(U_{\va,\rho}, v_{\va,\rho})-I_1-I_2+\va
I_3.
\end{array}
\end{equation}
Now we estimate each term  in \eqref{43}.

For $I_1$, from \eqref{eq42} we see
\begin{equation}\label{44}
\begin{array}{ll}
I_1&=\dis\int_{\R^3}\Bigl[4\Bigl(\sum\limits_{i=1}^\ell
U_{\va,x^i}\Bigr)^3\sum\limits_{i=1}^\ell
v_{\va,y^i}-4\dis\sum\limits_{i,j=1}^\ell
U^3_{\va,x^i}v_{\va,y^j}+4\sum\limits_{i=1}^\ell
U_{\va,x^i}\Bigl(\sum\limits_{i=1}^\ell v_{\va,y^i}\Bigr)^3\Bigr]\vspace{0.2cm}\\
&\hspace{0.4cm}+O\Bigl(\dis\int_{\R^N}\Bigl(\sum\limits_{i=1}^\ell
U_{\va,x^i}\Bigl)^2\Bigl(\sum\limits_{i=1}^\ell
v_{\va,y^i}\Bigr)^2\Bigl)\vspace{0.2cm}\\
&=O\Bigl(\dis\int_{\R^3}\sum\limits_{i\neq j}^\ell
U^2_{\va,x^i}U_{\va,x^j}\sum\limits_{i=1}^\ell
v_{\va,y^i}\Bigl)+O(\va^3e^{-(1-\tau)|x^1-y^1|}+\va^2e^{-(1-\tau)|x^1-y^1|}+\va^4)\vspace{0.2cm}\\
&=O(\va
e^{-(1-\tau)|x^1-x^2|}+\va^4)+O(\va^2e^{-(1-\tau)|x^1-y^1|}).
\end{array}
\end{equation}

Similarly,
\begin{equation}\label{45}
\begin{array}{ll}
I_2=O(\va
e^{-(1-\tau)|y^1-y^2|}+\va^4)+O(\va^2e^{-(1-\tau)|x^1-y^1|}).
\end{array}
\end{equation}

Calculating $I_3$, we obtain
$$
I_3=\int_{\R^3}\Bigl(\dis\sum\limits_{i,j=1}^\ell
U_{\va,x^i}U_{\va,y^j}-\dis\sum\limits_{i,j=1}^\ell
v_{\va,x^i}v_{\va,y^j}\Bigr).
$$
On the other hand, by \eqref{eq42} and Proposition~\ref{pro11}, we
see
\begin{equation}\label{eq46}
\begin{array}{ll}
\dis\int_{\R^3} U_{\va,x^i}U_{\va,y^j}&=\dis\int_{\R^3}
\bigl(U_{x^i}+\va^2
p_\va(|x-x^i|)+w(|x-x^i|)\bigr)\vspace{0.2cm}\\
&\hspace{1cm}\times\bigl(U_{y^j}+\va^2
p_\va(|x-y^j|)+w(|x-y^j|)\bigr)\vspace{0.2cm}\\
&=\dis\int_{\R^3} U_{x^i}U_{y^j}+O(\va e^{-(1-\tau)|x^1-y^1|}+\va^4)\vspace{0.2cm}\\
&= \bar C_{ij}e^{-|x^i-y^j|}+O(\va e^{-(1-\tau)|x^1-y^1|}+\va^4),
\end{array}
\end{equation}
and similarly,
$$
\int_{\R^3}\dis\sum\limits_{i,j=1}^\ell
v_{\va,x^i}v_{\va,y^j}=O(\va^2 e^{-(1-2\tau)|x^1-y^1|}+\va^5).
$$
Hence,
\begin{equation}\label{46}
I_3=\dis\sum\limits_{i,j=1}^\ell \bar C_{ij}e^{-|x^i-y^j|}+O(\va e^{-(1-\tau)|x^1-y^1|}+\va^4).
\end{equation}

At last, we estimate $I(U_{\va,r}, v_{\va,r})+I(U_{\va,\rho},
v_{\va,\rho})$. We find
\begin{eqnarray*}
I(U_{\va,r}, v_{\va,r})&=&\dis\sum\limits_{j=1}^\ell
I(U_{\va,x^j}, v_{\va,x^j})-\frac14\dis\int_{\R^3}\Bigl[\Bigl(\dis\sum\limits_{j=1}^\ell
U_{\va,x^j}\Bigr)^4-\dis\sum\limits_{j=1}^\ell
U^4_{\va,x^j}-4\dis\sum\limits_{i<j}^\ell U^3_{\va,x^j}U_{\va,x^i}\Bigr]\\
&&-\frac14\dis\int_{\R^3}\Bigl[\Bigl(\dis\sum\limits_{j=1}^\ell
v_{\va,x^j}\Bigr)^4-\dis\sum\limits_{j=1}^\ell
v^4_{\va,x^j}-4\dis\sum\limits_{i<j}^\ell v^3_{\va,x^j}v_{\va,x^i}\Bigr]\\
&=&\dis\sum\limits_{j=1}^\ell I(U_{\va,x^j}, v_{\va,x^j})\\
&&-\dis\int_{\R^3}\Bigl(\dis\sum\limits_{i<j}^\ell
U^3_{\va,x^j}U_{\va,x^i}+3\dis\sum\limits_{i,j=1}^\ell
U^2_{\va,x^j}U^2_{\va,x^i}+\dis\sum\limits_{i<j}^\ell
v^3_{\va,x^j}v_{\va,x^i}+3\dis\sum\limits_{i,j=1}^\ell
v^2_{\va,x^j}v^2_{\va,x^i}\Bigr).
\end{eqnarray*}

Similar to \eqref{eq46}, we have for $i\neq j$
\begin{eqnarray*}
\dis\int_{\R^3}\dis\sum\limits_{i<j}^\ell
U^3_{\va,x^j}U_{\va,x^i}&=&\dis\sum\limits_{i<j}^\ell C_{ij}\frac{e^{-|x^i-x^j|}}{|x^i-x^j|}+O(\va
e^{-(1-\tau)|x^1-x^2|}+\va^4),\\
\dis\int_{\R^3}\dis\sum\limits_{i,j=1}^\ell
U^2_{\va,x^j}U^2_{\va,x^i}&=&\dis\sum\limits_{i<j}^\ell C'_{ij}\frac{e^{-2|x^i-x^j|}}{|x^i-x^j|^2}+O(\va
e^{-(1-\tau)|x^1-x^2|}+\va^4),
\end{eqnarray*}
and
$$
\dis\int_{\R^3}\Bigl(\dis\sum\limits_{i<j}^\ell
v^3_{\va,x^j}v_{\va,x^i}+3\dis\sum\limits_{i,j=1}^\ell
v^2_{\va,x^j}v^2_{\va,x^i}\Bigr)=O(\va^4).
$$

Therefore,
\begin{equation}\label{48}
I(U_{\va,r}, v_{\va,r})=\dis\sum\limits_{j=1}^\ell I(U_{\va,x^j},
v_{\va,x^j})+\dis\sum\limits_{i<j}^\ell C_{ij}\frac{e^{-|x^i-x^j|}}{|x^i-x^j|}+O(\va
e^{-(1-\tau)|x^1-x^2|}+\va^4).
\end{equation}
With the same argument, we check
\begin{equation}\label{49}
I(U_{\va,\rho}, v_{\va,\rho})=\dis\sum\limits_{j=1}^\ell
I(U_{\va,y^j},
v_{\va,y^j})+\dis\sum\limits_{i<j}^\ell C_{ij}\frac{e^{-|y^i-y^j|}}{|y^i-y^j|}+O(\va
e^{-(1-\tau)|x^1-x^2|}+\va^4).
\end{equation}

Now, inserting \eqref{44}, \eqref{45}, \eqref{46}, \eqref{48} and
\eqref{49} into \eqref{43}, we complete the proof.

\end{proof}

\begin{proposition}\label{491}
We have
\[
 \begin{split}
&\bar
I(U_{\va,r}+v_{\va,\rho}+\omega_\va,\omega_{\va,r}+U_{\va,\rho}+v_\va,
v_{\va,r}+\omega_{\va,\rho}+U_\va)\\
=&\dis\sum\limits_{j=1}^\ell \bar I(U_{\va,x^j}, v_{\va,x^j},
\omega_{\va,x^j})+\dis\sum\limits_{j=1}^\ell \bar I(U_{\va,y^j},
v_{\va,y^j}+\omega_{\va,y^j})+\bar I(U_\va,v_\va,\omega_\va)\\
&-\dis\sum\limits_{i<j}^\ell C_{ij}\frac{e^{-|x^i-x^j|}}{|x^i-x^j|}
-\dis\sum\limits_{i<j}^\ell C_{ij} \frac{e^{-|y^i-y^j|}}{|y^i-y^j|}+
\va\dis\sum\limits_{i,j=1}^\ell \bar C_{ij} e^{-|x^i-y^j|}
+\va\dis\sum\limits_{j=1}^\ell \tilde C_{j}(e^{-|x^j|}+e^{-|y^j|})\\
&+O\bigl(\va e^{-(1-\tau)|y^1-y^2|}+\va e^{-(1-\tau)|x^1-x^2|}+\va^2
(e^{-(1-\tau)|x^1|}+e^{-(1-\tau)|y^1|}+
e^{-(1-\tau)|x^1-y^1|})+\va^4\bigl),
\end{split}
\]
where $\bar C_{ij},\,\tilde C_{j}$ and $C_{ij} \, (i,j=1,\cdots,\ell)$ are positive constants independent
of $\va$, $r$ and $\rho$.
\end{proposition}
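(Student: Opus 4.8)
The plan is to follow the proof of Proposition~\ref{3-1} almost verbatim, carrying along the extra component and the two extra linear--coupling integrals. Write $P=U_{\va,r}+v_{\va,\rho}+\omega_\va$, $Q=\omega_{\va,r}+U_{\va,\rho}+v_\va$ and $S=v_{\va,r}+\omega_{\va,\rho}+U_\va$, so that the quantity to expand is $\bar I(P,Q,S)$. Since $\bar I$ decomposes as the sum of the three one--component energies $\frac12\|\cdot\|_{\R^3}^2-\frac14\int(\cdot)^4$ together with the linear part $\va\int(PQ+PS+QS)$, I would first expand $-\frac14\int(P^4+Q^4+S^4)$ around the sums $\sum_jU_{\va,x^j}$, $\sum_jU_{\va,y^j}$ and $U_\va$, as in \eqref{48}--\eqref{49}, and then expand $\va\int(PQ+PS+QS)$ around products of single bumps, as in \eqref{eq46}. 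The diagonal contributions --- those in which both factors are centred at the same point of $\{x^j\}\cup\{y^j\}\cup\{0\}$ --- reassemble, by translation invariance and by the full symmetry of $\bar I$ in its three arguments, into the self--energies $\sum_j\bar I(U_{\va,x^j},v_{\va,x^j},\omega_{\va,x^j})=\ell\,\bar I(U_\va,v_\va,\omega_\va)$, $\sum_j\bar I(U_{\va,y^j},v_{\va,y^j},\omega_{\va,y^j})=\ell\,\bar I(U_\va,v_\va,\omega_\va)$ and $\bar I(U_\va,v_\va,\omega_\va)$; what remains is a finite list of off--diagonal interaction integrals.

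Each interaction integral is then estimated by means of Proposition~\ref{pro11}, the representation of $(U_\va,v_\va,\omega_\va)$ from Proposition~\ref{pro2.2} (so that $v_\va=\va\tilde v_1+O(\va^2)$ and $\omega_\va=\va\tilde\omega_1+O(\va^2)$ in $H^1(\R^3)$, where $\tilde v_1=\tilde\omega_1=-(-\Delta+1)^{-1}U$), and the decay bounds \eqref{eq231}. Exactly four families survive into the main part. (i) Two large bumps on the same ring, $\int U^3_{\va,x^i}U_{\va,x^j}$ inside $\int P^4$ and $\int U^3_{\va,y^i}U_{\va,y^j}$ inside $\int Q^4$ with $i\ne j$; by Proposition~\ref{pro11}(ii) these give $-\sum_{i<j}C_{ij}e^{-|x^i-x^j|}/|x^i-x^j|-\sum_{i<j}C_{ij}e^{-|y^i-y^j|}/|y^i-y^j|$. (ii) A large bump of the $x$--ring against one of the $y$--ring, coming from $\va\int PQ\supset\va\int U_{\va,x^i}U_{\va,y^j}$ and from the quartic cross terms $\int U^3_{\va,x^i}v_{\va,y^j}$, $\int U^3_{\va,y^i}v_{\va,x^j}$, all of size $\va e^{-|x^i-y^j|}$, which add up to $\va\sum_{i,j}\bar C_{ij}e^{-|x^i-y^j|}$. (iii) A large bump of either ring against the central bump $U_\va$, coming from $\va\int PS\supset\va\int U_{\va,x^j}U_\va$, $\va\int QS\supset\va\int U_{\va,y^j}U_\va$ and from the quartic cross terms $\int U_\va^3v_{\va,x^j}$, $\int U_\va^3\omega_{\va,y^j}$, $\int U^3_{\va,x^j}\omega_\va$, $\int U^3_{\va,y^j}v_\va$, all of size $\va e^{-|x^j|}$ or $\va e^{-|y^j|}$, which add up to $\va\sum_j\tilde C_j(e^{-|x^j|}+e^{-|y^j|})$. (iv) All remaining cross terms --- those with two ``small'' factors, a large bump paired with a small piece centred at another point, or any term hitting the higher--order pieces $\va^2p_\va$, $\va^2c$, $\va^2d$, $w$, $h$, $g$ --- are, by Proposition~\ref{pro11}, each dominated by one of $\va e^{-(1-\tau)|x^1-x^2|}$, $\va e^{-(1-\tau)|y^1-y^2|}$, $\va^2e^{-(1-\tau)|x^1|}$, $\va^2e^{-(1-\tau)|y^1|}$, $\va^2e^{-(1-\tau)|x^1-y^1|}$ or $\va^4$, the $SO(3)$--symmetry reducing every index to one of $x^1,x^2,y^1,y^2$. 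Summing over $i,j$ and inserting \eqref{1-29-9}--\eqref{1-29-10} produces the stated identity.

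The hard part, and the only place one cannot simply quote \eqref{eq231}, is keeping the exponents in families (ii) and (iii) sharp: the a priori rate $1-\tau$ would put $\va e^{-(1-\tau)|x^i-y^j|}$ and $\va e^{-(1-\tau)|x^j|}$ strictly above the threshold $\va^{(m+\sigma)/(m-n)}$ needed in the reduction of Proposition~\ref{pro41}. One must instead use that the profiles actually present, $U$ and $\tilde v_1=\tilde\omega_1=-(-\Delta+1)^{-1}U$, all decay at the sharp rate $e^{-r}$, so that $\int U_\xi U$, $\int U^3_\xi\tilde v_1$ and $\int U^3_\xi\tilde\omega_1$ are asymptotic to $C\,e^{-|\xi|}$ with \emph{no} loss in the exponent, and then to throw into the remainder only the genuinely higher--order pieces $\va^2p_\va$, $\va^2c$, $\va^2d$, $w$, $h$, $g$. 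After that point, matching each of the many cross terms generated by $P^4,Q^4,S^4$ and $PQ,PS,QS$ against either the named main terms or the six error terms, uniformly in $(r,\rho)\in\mathcal D_\va\times\mathcal D_\va$, is routine and proceeds exactly as in the proof of Proposition~\ref{3-1}.
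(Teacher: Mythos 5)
Your proposal follows exactly the route the paper intends: the paper's own proof of Proposition~\ref{491} is omitted with the remark that it is ``similar to that of Proposition~\ref{3-1}'', and your argument is precisely that proof carried over to three components, with the correct identification of the new ring--centre interaction terms $\va\tilde C_j(e^{-|x^j|}+e^{-|y^j|})$ and of the sharp-decay issue for the leading profiles. The only bookkeeping point to keep explicit is that the quadratic cross terms $\int(\nabla U_{\va,x^i}\cdot\nabla v_{\va,y^j}+U_{\va,x^i}v_{\va,y^j})$, $\int(\nabla U_{\va,x^j}\cdot\nabla \omega_\va+U_{\va,x^j}\omega_\va)$, etc.\ are also of the main orders $\va e^{-|x^i-y^j|}$ and $\va e^{-|x^j|}$ and must be converted, via the equations satisfied by $(U_\va,v_\va,\omega_\va)$, into cubic terms that cancel (up to higher order) the quartic cross terms you list in families (ii) and (iii) --- exactly as in the decomposition \eqref{43} --- but accounting for them only changes the (still positive) values of $\bar C_{ij}$ and $\tilde C_j$, not the form of the expansion.
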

\begin{proof}
The proof is similar to that of Proposition~\ref{3-1} and we omit it
here.
\end{proof}
\vskip .2in

\noindent{\bf Acknowledgment}. The authors are grateful to Shusen
Yan for  helpful discussion.  S. Peng thanks Taida Institute for
Mathematical Sciences for the warm hospitality during his visit.

\end{document}